\theoremstyle{plain}
\newtheorem{theorem}{Theorem}[section]
\newtheorem{proposition}[theorem]{Proposition}
\newtheorem{corollary}[theorem]{Corollary}
\newtheorem{lemma}[theorem]{Lemma}
\theoremstyle{definition}
\newtheorem{example}[theorem]{Example}
\theoremstyle{remark}
\newtheorem{remark}[theorem]{Remark}
\newcommand{\Z}{\ensuremath{\mathbb{Z}}}
\newcommand{\R}{\ensuremath{\mathbb{R}}}
\newcommand{\HP}{\ensuremath{\mathbb{H}P}}
\newcommand{\FP}{\ensuremath{\mathbb{F}P}}
\newcommand{\z}[1]{\ensuremath{\Z/2^{#1}}}
\newcommand{\zp}[1]{\ensuremath{\Z/p^{#1}}}
\newcommand{\zz}[1]{\ensuremath{\mathbb{Z}_{(#1)}}}
\newcommand{\EM}{Eilenberg-MacLane\,}
\newcommand{\s}[1]{\ensuremath{S^{#1}\langle #1\rangle }}
\newcommand{\xra}{\ensuremath{\xrightarrow}}
\newcommand{\coker}{\ensuremath{\mathrm{coker\,}}}
\newcommand{\sq}[1]{\ensuremath{\mathrm{Sq}_{#1}^2}}
\newcommand{\pzpr}{\ensuremath{\mathcal{P}^1_{\zp{r}}}}
\newcommand{\T}[1]{\ensuremath{T^{#1}_{\infty}(p^r)}}
\newcommand{\B}{\ensuremath{\mathcal{B}}}
\begin{document}
\title[Modular Cohomotopy]{On Modular Cohomotopy Groups}
\author[P. Li]{Pengcheng Li}
\address{Department of Mathematics, Southen University of Science and Technology, Shenzhen {\rm 518055, P. R. China}}
\email{lipc@sustech.edu.cn}

\author[J. Pan]{Jianzhong Pan}
\address{
Institute of Mathematics,  Chinese Academy of Mathematics and Systems Science,  University of Chinese Academy of Sciences, Beijing {\rm 100190, P. R. China}}
\email{pjz@amss.ac.cn}

\author[J. Wu]{Jie Wu}
\address{
   School of Mathematical Sciences, Hebei Normal University,  Shijiazhuang,  {\rm 050024, P. R. China}; Yanqi Lake Beijing Institute of Mathematica Sciences, Yanqihu, Huairou District, Beijing {\rm 101408, P. R. China}
   }
\email{wujie@bimsa.cn}

\begin{abstract}
   Let $p$ be a prime and let $\pi^n(X;\mathbb{Z}/p^r)=[X,M_n(\mathbb{Z}/p^r)]$ be the set of homotopy classes of based maps from CW-complexes $X$ into the mod $p^r$  Moore spaces $M_n(\mathbb{Z}/p^r)$ of degree $n$, where $\mathbb{Z}/p^r$ denotes the integers mod $p^r$. In this paper we firstly determine the modular cohomotopy groups $\pi^n(X;\mathbb{Z}/p^r)$ up to extensions by classical methods of primary cohomology operations and give conditions for the splitness of the extensions. Secondly we utilize some unstable homotopy theory of Moore spaces to study the modular cohomotopy groups; especially, the group $\pi^3(X;\mathbb{Z}_{(2)})$ with $\dim(X)\leq 6$ is determined.
 \end{abstract}
 
 \maketitle

 \section{Introduction}
 
 The cohomotopy groups $\pi^n(X)=[X,S^n]$ of homotopy classes of maps into $n$-sphere with $\dim(X)\leq 2n-2$ were firstly introduced by Borsuk \cite{Borsuk36} and systematically studied by Spanier \cite{Spanier49}. Hereafter, Peterson \cite{Peterson56-2} defined the \emph{generalized (stable) cohomotopy groups} of a CW-pair $(X,A)$ of dimension at most $2n-2$ by
  \[\pi^n(X,A;G):=[X,A;M_n(G),\ast],\quad\pi^n(X;G):=[X,M_n(G)],\]where $M_n(G)$ denotes the Moore space with the (reduced) homology group $G$ precisely only in dimension $n$.  Clearly $\pi^n(X,A;G)\cong \pi^{n}(X/A;G)$ and $\pi^n(X;\Z)\cong \pi^n(X)$. Peterson's main related results are summarized in Theorem \ref{thmPeterson}.
 
 Due to the \emph{Pontrjagin-Thom construction} (cf.~\citep[Chapter IX, Theorem 5.5]{Kosinski93}), the cohomotopy theory of closed smooth manifolds has been extensively studied and is still a hot research direction, for instance \cite{Hansen81,Taylor2012,KMT2012,Konstantis2020,Kons2020}.
 By Bauer and Furuta's famous work \cite{BF04}, the stable cohomotopy groups of the complex projective space $\mathbb{C}P^{d-1}(d>1)$ have close relationship with the cohomotopy Seiberg-Witten invariants for a Riemannian four-manifold with the first Betti number $b_1=0$.
 Cohomotopy theory also boosts the development of differential geometry and mathematical physics. In 2005, Auckly and Kapitanski \cite{AK05} gave an analytic proof of Pontrjagin's classification of maps from a three-manifold $M$ to $S^2$ \cite{Pontrjagin1941} by the method of smooth flat connections on $M$. Restricted on those flat connections with finite energy, they further proved the existence of a minimizer of the energy functional $E_\varphi$ introduced by Faddeev \cite{KF75} of a reference map $\varphi\colon M\to S^2$.

 Let $\zp{r}$ denote the group of mod $p^r$ integers for some prime $p$ and some positive integer $r$. In this paper we study the cohomotopy groups from the new perspective of the \emph{modular cohomotopy groups $\pi^\ast(X;\zp{r})$}.
 In general, modular cohomotopy groups detect the $p$-torsion information of the integral cohomotopy groups, hence to a certain extend, modular cohomotopy groups are useful and advantageous to the computations of the (torsion of) integral ones, which as stated before have close connection with geometry and physics. On the other hand, the following example shows a direct bond between modular cohomotopy and geometric topology.  The equivalence classes of oriented real $n$-dimensional vector bundles over a CW-complex $X$ are classified by $[X,\B SO(n)]$, where $\B SO(n)$ is the classifying space of  the special orthogonal group $SO(n)$. Since the Stiefel manifold $V_{2n+1,2}$ of orthogonal $2$-frames in $\R^{2n+1}$ is a $(4n-1)$-dimensional CW-complex with the $(4n-2)$-skeleton $M_{2n-1}(\z{})$ (cf.~\citep[page 302]{Hatcher}), there is an exact sequence
 \[\cdots\to [X,SO(2n+1)]\to \pi^{2n-1}(X;\z{})\xra{}[X,\B  SO(2n-1)]\]
  if $\dim(X)\leq 4n-3$.  The group $\pi^{2n-1}(X;\z{})$ is closely linked with the classification of vector bundles and tells information that cannot be obtained by the classic tools of characteristic classes and $K$-theory, which are appropriate only when $n$ passes to $\infty$.

 To make more sense of the introduction and formulate the main theorems, we need some global conventions and notations.
 Unless otherwise stated, complexes mean connected based CW-complexes and maps are base-point-preserving; we don't notationally distinguish a map with its homotopy class.
 For an abelian group $G$ and an integer $n$, $H^n(X;G)$ denotes the $n$-{th} \emph{reduced} cohomology group with coefficients in $G$, $H^n(X)=H^n(X;\Z)$; similar notations are used for homology.  For a prime $p$, let
 \[\mathcal{P}^1\colon H^{\ast}(-;\zp{})\to H^{\ast+2p-2}(-;\zp{})\]
 be the Steenrod's reduced power operations and by convention, $\mathcal{P}^1$ refers to the Steenrod square $\mathrm{Sq}^{2}$ if $p=2$.
 For $G=\Z,\zp{r}$ or $\zz{p}$, define $\mathcal{P}^1_G$ by the composition
 \begin{equation}\label{StG}
    \mathcal{P}^1_{G}\colon K_n(G)\xra{}K_n(\zp{})\xra{\mathcal{P}^1}K_{n+2p-2}(\zp{}),
 \end{equation}
 where the first map corresponds to the reduction mod $p$.
 If $f\colon X\to Y$ is a map inducing a homomorphism
 \[f_\sharp \colon [W,X]\to [W,Y]\]
  for a space $W$,  the kernel and cokernel of $f_\sharp$ are respectively denoted by
  \begin{align*}
     K_f(W):&=\ker(f_\sharp)=\{\alpha\colon W\to X:f\circ \alpha= \ast\},\\
     T_{f}(W):&=\coker(f_\sharp)=[W,Y]/f_\sharp[W, X].
  \end{align*}
 
 For a complex $X$ of dimension $\leq 2n-2$, the \emph{cohomotopy Hurewicz homomorphism $h^n=h^n_G$ with coefficients in $G$}, or simply the \emph{generalized cohomotopy Hurewicz homomorphism} is defined by the composition
 \[
       h^n_G\colon  \pi^n(X;G)=[X,M_n(G)]\xra{(i_M)_\sharp}[X,K_n(G)]\cong H^{n}(X;G),
 \]
 where $i_M\colon M_n(G)\to K_n(G)$ is the canonical inclusion map into the \EM space $K_n(G)$ of type $(G,n)$ and the isomorphism is due to the Brown's representability theorem (cf.~\citep[Theorem 4.57]{Hatcher}).
 Note that $h^n_G$ is natural with respect to maps $X\to Y$ and group homomorphisms $G\to H$.
 Let $p$ be a prime and let $\zz{p}$ be the group of $p$-local integers.
 If $G=\Z,\zp{r}$ or $\zz{p}$, then $H^n(M_n(G);G)\cong G\langle [i_M]\rangle$ and we have
 \[h^n_G(f)=f^\ast([i_M]),\]
 where the cohomology class $[i_M]$ is represented by the inclusion $i_M$. We call $h^n_\Z,h^n_{\zp{r}},h^n_{\zz{p}}$ the $n$-{th} \emph{integral, mod $p^r$, $p$-local cohomotopy Hurewicz homomorphism}, respectively.

 In \citep[Section 6.1]{Taylor2012}, Taylor gave a modern approach to Steenrod's classification theorem \cite{Steenrod1947}: For a complex $X$ of dimension at most $n+1$, $n\geq 3$,  there is a short exact sequence of abelian groups
 \[0\to T_{\Omega \mathrm{Sq}^2_\Z}(X)\to \pi^{n}(X)\to H^n(X)\to 0,\] where $\mathrm{Sq}^2_\Z\colon H^n(X;\Z)\xra{}H^{n+2}(X;\z{})$. Moreover, using the methods in \cite{LT72}, he showed that the above extension splits if and only if
 \begin{equation*}
    \mathrm{Sq}^2_\Z\big(H^{n-1}(X;\Z)\big)
    =\mathrm{Sq}^2\big(H^{n-1}(X;\z{})\big).
 \end{equation*}
 
 Our first main theorem is partially motivated by Taylor's analysis.
 
 \begin{theorem}\label{mainthm1}
 Let $p$ be a prime and $X$ be a complex of dimension $\leq n+2p-3,n\geq 2p-1$. For $G=\zz{p}$ or $\zp{r}(r\geq 1)$, there is a short exact sequence of abelian groups:
 \begin{equation}\label{EXT:G}
    0\to T_{\Omega \mathcal{P}^1_G}(X)\to \pi^n(X;G)\xra{h^n} H^n(X;G)\to 0,
 \end{equation}
 where $T_{\Omega \mathcal{P}^1_G}(X)=H^{n+2p-3}(X;\zp{})/\mathcal{P}^1_G\big(H^{n-1}(X;G)\big);$ $\mathcal{P}^1_G=\mathrm{Sq}^2_G$ if $p=2$.
 
 \begin{enumerate}[1.]
    \itemsep1em
    \item If $G=\zz{p}$, $p\geq 2$, then the extension (\ref{EXT:G}) splits if and only if
    \begin{equation}
       \mathcal{P}^1_{\zz{p}}\big(H^{n-1}(X;\zz{p})\big)=\mathcal{P}^1\big(H^{n-1}(X;\zp{})\big).
    \end{equation}

    \item If $G=\z{r}$, then the extension (\ref{EXT:G}) splits if one of the following conditions holds:
    \begin{enumerate}
       \item $r\geq 2$, the homomorphism $[2]\colon H^{n}(X;\z{r-1})\to H^n(X;\z{r})$ induced by the coefficient homomorphism $\z{r-1}\xra{\cdot 2}\z{r}$ is injective and
       \begin{equation}\label{mainthm1:2.a}
          \mathrm{Sq}^1\big(H^{n}(X;\z{})\big)\subseteq\sq{\z{r}}\big(H^{n-1}(X;\z{r})\big).
       \end{equation}
 
      \item  $r=1$,
    $\mathrm{Sq}^1\colon H^n(X;\z{})\to H^{n+1}(X;\z{})$ is trivial.
 \end{enumerate}
   \item If $G=\zp{r}$, $p\geq 3$, then the extension (\ref{EXT:G}) splits if $r=1$, or $r\geq 2$ and the homomorphism $[p]\colon H^n(X;\Z/p^{r-1})\to H^n(X;\zp{r})$ induced by $\zp{r-1}\xra{\cdot p}\zp{r}$ is injective.
 
 \end{enumerate}
 
 \end{theorem}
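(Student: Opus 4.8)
The plan is to compute $\pi^n(X;G)=[X,M_n(G)]$ through the first two stages of the Postnikov tower of the Moore space $M_n(G)$. The hypothesis $n\geq 2p-1$ is exactly what places us in the metastable range: it forces $\dim(X)\leq n+2p-3\leq 2n-2$, so on the one hand $[X,M_n(G)]$ carries a natural abelian group structure (equal to stable cohomotopy in this range), and on the other hand the relevant homotopy groups of $M_n(G)$ are already stable, since $2p-3\leq n-2$. First I would compute these low homotopy groups. For $G=\zp{r}$ one uses the cofibration $S^n\xra{p^r}S^n\to M_n(G)\to S^{n+1}$ together with the first $p$-torsion stable stem $\pi_{2p-3}^s\cong\zp{}$ (generated by $\alpha_1$, resp. $\eta$ when $p=2$); for $G=\zz{p}$ one uses $M_n(\zz{p})\simeq S^n_{(p)}$ directly. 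In every case one finds $\pi_n(M_n(G))\cong G$, no new homotopy in dimensions $n<i<n+2p-3$, and $\pi_{n+2p-3}(M_n(G))\cong\zp{}$. Hence there is a principal fibration
\[K_{n+2p-3}(\zp{})\to F\to K_n(G)\]
whose $k$-invariant is $\mathcal{P}^1_G\in H^{n+2p-2}(K_n(G);\zp{})$ (the standard identification of the first $k$-invariant of $M_n(G)$ with the reduced power operation, $\mathrm{Sq}^2_G$ when $p=2$; pinning this down uses the Cartan--Serre computation of $H^\ast(K_n(G);\zp{})$ in this range), together with a map $M_n(G)\to F$ that is an isomorphism on $\pi_i$ for $i\leq n+2p-3$ and hence induces a bijection $[X,M_n(G)]\cong[X,F]$.

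Next I would feed $X$ into the fibration. The exact sequence of the principal fibration, after the identifications $[X,\Omega K_n(G)]=H^{n-1}(X;G)$ and $[X,K_{n+2p-3}(\zp{})]=H^{n+2p-3}(X;\zp{})$, reads
\[H^{n-1}(X;G)\xra{\Omega\mathcal{P}^1_G}H^{n+2p-3}(X;\zp{})\to[X,F]\xra{h^n}H^n(X;G)\xra{\mathcal{P}^1_G}H^{n+2p-2}(X;\zp{}).\]
The rightmost group vanishes since $\dim(X)\leq n+2p-3<n+2p-2$, so $h^n$ is onto; and by stability of the power operation the connecting map $\Omega\mathcal{P}^1_G$ is just $\mathcal{P}^1_G$ on cohomology. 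This gives precisely the short exact sequence (\ref{EXT:G}) with $T_{\Omega\mathcal{P}^1_G}(X)=H^{n+2p-3}(X;\zp{})/\mathcal{P}^1_G(H^{n-1}(X;G))$. Checking that the transported group structure agrees with the metastable one and that the quotient map is the cohomotopy Hurewicz homomorphism completes the first assertion.

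For the splitting statements I would analyze the additivity defect of a set-theoretic section $s$ of $h^n$, namely the $2$-cocycle $c(u,v)=s(u)+s(v)-s(u+v)\in T_{\Omega\mathcal{P}^1_G}(X)$, whose class in $\Ext^1(H^n(X;G),T_{\Omega\mathcal{P}^1_G}(X))$ is the obstruction, exploiting naturality in the coefficient group (the Moore-space maps induced by $\zp{r-1}\xra{\cdot p}\zp{r}$ and by reduction $\zz{p}\to\zp{}$). Two clean cases organize everything. When $G=\zp{}$ with $p$ odd, $p\cdot\mathrm{id}_{M_n(\zp{})}\simeq 0$ in the metastable range, so $\pi^n(X;\zp{})$ is a $\zp{}$-vector space and the extension splits automatically, giving case (3) for $r=1$. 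For $r\geq 2$ the target $H^n(X;\zp{r})$ is no longer a vector space and the extension class lives in $\Ext^1$ of a $\zp{r}$-module against the $p$-torsion group $T_{\Omega\mathcal{P}^1_G}(X)$; here I would show that injectivity of $[p]\colon H^n(X;\zp{r-1})\to H^n(X;\zp{r})$ forces this class to vanish, since it guarantees that each class of $H^n(X;\zp{r})$ has a lift whose $p$-divisibility is controlled, killing the only surviving extension. For $G=\zz{p}$ the comparison map $S^n_{(p)}=M_n(\zz{p})\to M_n(\zp{})$ reduces the splitting to the (always split) mod $p$ extension, and matching the two torsion quotients is exactly the condition $\mathcal{P}^1_{\zz{p}}(H^{n-1}(X;\zz{p}))=\mathcal{P}^1(H^{n-1}(X;\zp{}))$, which I expect to give the stated ``if and only if.''

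The main obstacle is the prime $2$. There $M_n(\z{r})$ is never a $\z{}$-module object, even for $r=1$, because $2\cdot\mathrm{id}\not\simeq 0$ (this is detected by $\eta$), so the reduction to a vector-space splitting fails and the Bockstein $\mathrm{Sq}^1$ enters the obstruction. The delicate technical heart is to show that the cocycle $c$ is governed exactly by $\mathrm{Sq}^1$, so that triviality of $\mathrm{Sq}^1$ on $H^n(X;\z{})$ settles case (2)(b), while the inclusion (\ref{mainthm1:2.a}) together with injectivity of $[2]$ settles case (2)(a). I would handle this by a secondary-operation analysis of the two-stage tower for $G=\z{r}$, tracking how $\mathrm{Sq}^1$ and $\sq{\z{r}}$ interact with the coefficient map $\z{r-1}\xra{\cdot 2}\z{r}$; this is precisely where the injectivity of $[2]$ is consumed.
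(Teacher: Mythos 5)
Your construction of the exact sequence (\ref{EXT:G}) is essentially the paper's: the two-stage space $F$ you build from the Postnikov data of $M_n(G)$ is exactly the fibre $F_n(G)$ of $\mathcal{P}^1_G\colon K_n(G)\to K_{n+2p-2}(\zp{})$ used in Proposition \ref{propEXT}, and whether one certifies the connectivity of $M_n(G)\to F$ by the low stable homotopy of the Moore space or (as the paper does) by the Serre exact sequence together with the structure of $H^\ast(K_n(\zp{r});\zp{})$ (Lemmas \ref{lemserre} and \ref{lemchEM}) is a matter of taste. This half is sound.

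The splitting statements are where the proposal has a genuine gap. The one idea that makes all three cases tractable --- and which your cocycle discussion does not supply --- is that, since $T_{\Omega\mathcal{P}^1_G}(X)$ is elementary abelian, the extension class is determined by how multiplication by $p$ acts on lifts of $p$-torsion classes of $H^n(X;G)$ (the lemma of Hilton the paper invokes), and that this action is computable geometrically: comparing the Bockstein fibration $K_n(\zp{})\xra{[p^{r-1}]}K_n(\zp{r})\xra{\rho}K_n(\zp{r-1})$ with $K_{n+2p-3}(\zp{})\to F_n(G)\xra{\imath}K_n(G)$ produces a self-map $\psi$ of $F_n(G)$ with $\imath\psi\simeq p_K$ and $\psi\imath\simeq p_F$, from which one reads off, for a $p$-torsion class $\gamma=[p^{r-1}]\gamma'$ (this is where injectivity of $[p]$ is consumed), that $p\bar{\gamma}=\mathrm{Sq}^1(\gamma')$ when $p=2$ and $p\bar{\gamma}=0$ when $p$ is odd. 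Your sketch replaces this mechanism with ``a secondary-operation analysis\ldots tracking how $\mathrm{Sq}^1$ and $\sq{\z{r}}$ interact,'' which restates the problem rather than solving it; only the case $G=\zp{}$, $p$ odd, where the exponent of the Moore space gives a vector-space structure, is actually completed by your argument. Moreover your route to case (1) is wrong as stated: you propose to reduce the $\zz{p}$ case to ``the (always split) mod $p$ extension,'' but for $p=2$ the mod $2$ extension does \emph{not} always split --- your own case (2)(b) demands $\mathrm{Sq}^1=0$ on $H^n(X;\z{})$ --- so the comparison $M_n(\zz{2})\to M_n(\z{})$ cannot carry the weight you assign to it, and the ``if and only if'' of case (1) (which the paper obtains by running the multiplication-by-$p$ computation in both directions, following Taylor) is left unjustified.
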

 
 \begin{remark}\label{rmk:mainthm1}
   \begin{flushleft}
 
   \end{flushleft}
    \begin{enumerate}
       \item (2.a) can be strengthened as follows: If $r\geq 2$ and the homomorphism $[2]$ is injective, then the extension (\ref{EXT:G}) splits if and only if the inclusion (\ref{mainthm1:2.a}) holds.
 
       \item If $G=\zp{}$, $p\geq 3$, one can also get the splitness of (\ref{EXT:G}) by applying Theorem \ref{thmPeterson} (\ref{lemsplit:zp}).
    \end{enumerate}
 
 \end{remark}

 When $n$ is odd, observe that the $n$-connected cover $\s{n}$ of $S^n$ is a torsion space of finite type.  Hence, for a finite suspension $X$, the group $[X, \s{2n+1}]$ is determined by its $p$-torsion components. Let $W_{(p)}$ denote the $p$-localization of a simply-connected space $W$. As can be seen from Section~\ref{sec:mod-p}, there is an isomorphism
 \[[X, \s{2n+1}_{(p)}]\cong \pi^{2n+2p-2}(X;\mathbb{Z}/p)\]
 if $\dim(X)\leq 2n+4p-4$, $p\geq 5,n\geq 1$ or $n=1, p\geq 2$.
 Note that $p$-localization loosens the restriction on $\dim(X)$, especially for greater primes $p$.
 Another way is to consider the double suspension $E^2\colon S^{2n-1}\to \Omega^2S^{2n+1}$, $n\geq 1$.
 Let $C(n)$ be the homotopy fibre of $E^2$; as an example, $C(1)=\Omega^3\s{3}$ \cite{CMN79-2}. Selick \cite{Selick85} showed that $C(n)_{(p)}$ is an H-space for each $n\geq 1, p\geq 2$ and Gray \cite{Gray88} constructed the classfying space $\B C(n)_{(p)}$ for such $p,n$.  If $p$ is odd, $C(n)_{(p)}$ has bottom two cells $M_{2pn-3}(\Z/p)$ (see Lemma \ref{lem:Cn}) and the bond with $\pi^{2pn-3}(X;\zp{})$ is set up.

 \begin{theorem}\label{mainthm:oc}
 Let $n$ be a positive integer and let $p$ be a prime satisfying:  i) $n\geq 1,p\geq 5$;  or ii) $n=1,p\geq 2$.
   \begin{enumerate}[1.]
    \item\label{mainthm:oc-1}  For a complex $X$ of dimension $\leq 2n+4p-4$, there is an exact sequence of groups
  \begin{multline*}\label{ES:oc-1}
       \pi^{2n+1}(\Sigma^2 X;\zz{p})\xra{(\Omega^2\iota_{(p)})_\sharp}  H^{2n+1}(\Sigma^2 X;\zz{p})\to [X,\Omega \s{2n+1}_{(p)}]  \\
       \to\pi^{2n+1}(\Sigma X;\zz{p})\xra{(\Omega\iota_{(p)})_\sharp} H^{2n+1}(\Sigma X;\zz{p})\to\pi^{2n+2p-2}(X;\zp{})\\
       \xra{}\pi^{2n+1}(X;\zz{p})
    \xra{(\iota_{(p)})_\sharp}H^{2n+1}(X;\zz{p}),
 \end{multline*}
 where $\iota_{(p)}\colon S^{2n+1}_{(p)}\to K_{2n+1}(\zz{p})$ is the canonical map representing a generator of $H^{2n+1}(S^{2n+1};\zz{p})$. In particular, if $p\geq 3$,
 \begin{enumerate}
    \item there is a natural isomorphism:
 \[\pi^{2n+2p-2}(X;\zp{})\cong T_{\Omega \iota_{(p)}}(X)\oplus K_{\iota_{(p)}}(X).\]
  \item there is a natural isomorphism if $\dim(X)\leq 2n+4p-5$:
   \[\pi^{2n+2p-3}(X;\zp{})\cong [X,\Omega \s{2n+1}_{(p)}]\cong T_{\Omega^2\iota_{(p)}}(X)\oplus K_{\Omega\iota_{(p)}}(X).\]
 
 \end{enumerate}
 
 \item\label{mainthm:oc-2}For a complex $X$ of dimension $\leq 4p-3$, $p\geq 2$, the exact sequence in (\ref{mainthm:oc-1}) extends from the right by \[\pi^{3}(X;\zz{p})\xra{(\iota_{(p)})_\sharp} H^{3}(X;\zz{p})\xra{\mathcal{P}^1_{\zz{p}}}H^{2p+1}(X;\zp{}),\]
 where $\mathcal{P}^1_{\zz{2}}=\mathrm{Sq}^2_{\zz{2}}$.
 \end{enumerate}
 
 \end{theorem}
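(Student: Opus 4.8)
The plan is to realise the entire sequence as the long exact sequence obtained by mapping $X$ into the $p$-local Whitehead-tower fibration
\[\s{2n+1}_{(p)}\xra{j} S^{2n+1}_{(p)}\xra{\iota_{(p)}} K_{2n+1}(\zz{p}),\]
whose fibre is the $(2n+1)$-connected cover of $S^{2n+1}_{(p)}$, and then to translate each term through two dictionaries: the loop--suspension adjunction $[X,\Omega^k Z]\cong[\Sigma^k X,Z]$, which turns $[X,\Omega^k S^{2n+1}_{(p)}]$ into $\pi^{2n+1}(\Sigma^k X;\zz{p})$ and $[X,\Omega^k K_{2n+1}(\zz{p})]$ into $H^{2n+1}(\Sigma^k X;\zz{p})$; and the key identification, available from Section~\ref{sec:mod-p} under the standing hypotheses, that the bottom cells of $\s{2n+1}_{(p)}$ form the Moore space $M_{2n+2p-2}(\zp{})$, the inclusion being connected enough to yield natural isomorphisms $[X,\s{2n+1}_{(p)}]\cong\pi^{2n+2p-2}(X;\zp{})$ for $\dim X\leq 2n+4p-4$ and $[X,\Omega\s{2n+1}_{(p)}]\cong\pi^{2n+2p-3}(X;\zp{})$ for $\dim X\leq 2n+4p-5$.

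First I would write down the exact sequence of the displayed fibration, namely $\cdots\to[X,\Omega S^{2n+1}_{(p)}]\to[X,\Omega K_{2n+1}(\zz{p})]\to[X,\s{2n+1}_{(p)}]\xra{j_\sharp}[X,S^{2n+1}_{(p)}]\xra{(\iota_{(p)})_\sharp}[X,K_{2n+1}(\zz{p})]$; since $X$ has bounded dimension all the relevant pointed sets are groups and the maps are homomorphisms, so this is an exact sequence of groups. Substituting the two dictionaries term by term produces verbatim the displayed sequence of part~(\ref{mainthm:oc-1}), the only replaced term being $[X,\s{2n+1}_{(p)}]\cong\pi^{2n+2p-2}(X;\zp{})$.

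For parts (a) and (b), exactness of a four-term stretch of this sequence gives the short exact sequences
\[0\to T_{\Omega\iota_{(p)}}(X)\to \pi^{2n+2p-2}(X;\zp{})\xra{j_\sharp} K_{\iota_{(p)}}(X)\to 0,\]
\[0\to T_{\Omega^2\iota_{(p)}}(X)\to [X,\Omega\s{2n+1}_{(p)}]\to K_{\Omega\iota_{(p)}}(X)\to 0,\]
by reading $T_{\Omega\iota_{(p)}}(X)=\coker(\Omega\iota_{(p)})_\sharp$ as the image of the connecting map and $K_{\iota_{(p)}}(X)=\ker(\iota_{(p)})_\sharp$; part (b) is then part (a) for the once-looped fibration together with the identification $[X,\Omega\s{2n+1}_{(p)}]\cong\pi^{2n+2p-3}(X;\zp{})$ recorded above. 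The hard part is the natural splitting for $p\geq 3$, which I expect to be the crux and the place where oddness of $p$ is essential. I would attack it by working with the Moore-space model $M:=M_{2n+2p-2}(\zp{})$ of the fibre and producing a natural section of $j_\sharp$, that is, a natural choice of lift along $\iota_{(p)}$: an element of $K_{\iota_{(p)}}(X)$ is a map $g\colon X\to S^{2n+1}_{(p)}$ with $\iota_{(p)}\circ g\simeq\ast$, its lifts to $\s{2n+1}_{(p)}$ form a torsor under $T_{\Omega\iota_{(p)}}(X)$, and a natural zero of this torsor is exactly a natural splitting. I would reduce the existence of such a choice to the vanishing of a single secondary, $k$-invariant-type obstruction; at odd primes this obstruction lies in a group controlled by $\mathcal{P}^1$ and free of the $\eta$/$\mathrm{Sq}^2$ interaction responsible for the non-split phenomenon at $p=2$, and I would verify its vanishing by the primary-operation bookkeeping already developed for Theorem~\ref{mainthm1}, taking care of the genuinely $p$-power-torsion behaviour when $p=3$ rather than merely invoking that $\pi^{2n+2p-2}(X;\zp{})$ is an $\mathbb{F}_p$-vector space (true outright only for $p\geq 5$).

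Finally, for part~(\ref{mainthm:oc-2}), where $n=1$, I would extend the tower one stage downward. The relevant input is the classical first $k$-invariant of $S^3$: since $\pi_{2p}(S^3)_{(p)}\cong\zp{}$ is detected by $\mathcal{P}^1$ applied to the fundamental class and the next homotopy group of $S^3$ contributes only in the range excluded by $\dim X\leq 4p-3$, the map $\iota_{(p)}\colon S^3_{(p)}\to K_3(\zz{p})$ factors, by an equivalence valid through that range, as the homotopy fibre of $\mathcal{P}^1_{\zz{p}}\colon K_3(\zz{p})\to K_{2p+1}(\zp{})$ (with $\mathcal{P}^1_{\zz{p}}$ read as $\mathrm{Sq}^2_{\zz{2}}$ when $p=2$). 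Feeding $X$ with $\dim X\leq 4p-3$ into the fibration sequence of $\mathcal{P}^1_{\zz{p}}$ then yields exactness of $\pi^3(X;\zz{p})\xra{(\iota_{(p)})_\sharp}H^3(X;\zz{p})\xra{\mathcal{P}^1_{\zz{p}}}H^{2p+1}(X;\zp{})$, and splicing this onto the sequence of part~(\ref{mainthm:oc-1}) at the common term $\pi^3(X;\zz{p})=[X,S^3_{(p)}]$ --- legitimate because both descriptions identify image and kernel there --- extends it on the right as claimed.
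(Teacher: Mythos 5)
Your skeleton for part (\ref{mainthm:oc-1}) — map $X$ into the fibration sequence $K_{2n}(\zz{p})\to\s{2n+1}_{(p)}\to S^{2n+1}_{(p)}\xra{\iota_{(p)}}K_{2n+1}(\zz{p})$, replace $[X,\s{2n+1}_{(p)}]$ and $[X,\Omega\s{2n+1}_{(p)}]$ by the modular cohomotopy groups via the Moore-space approximation of the bottom cells, and use the loop--suspension adjunction on the remaining terms — is exactly the paper's route, and your treatment of part (\ref{mainthm:oc-2}) (comparing $S^3_{(p)}$ with the two-stage Postnikov system, i.e.\ the fibre of $\mathcal{P}^1_{\zz{p}}$, through the range $\dim X\leq 4p-3$) is the same argument the paper runs by delooping the connecting map to $\B\s{3}_{(p)}$ and identifying $[X,\B\s{3}_{(p)}]\cong H^{2p+1}(X;\zp{})$. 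However, two points need repair. First, your justification that all the terms are groups — ``since $X$ has bounded dimension all the relevant pointed sets are groups'' — is not correct: for a target $S^{2n+1}$ the stable range is $\dim X\leq 4n$, and $2n+4p-4>4n$ unless $n\geq 2p-2$, so dimension alone buys nothing. The group structures come from the homotopy associative H-space structures on $S^{2n+1}_{(p)}$ and $\s{2n+1}_{(p)}$, which is precisely where the hypothesis ($p\geq 5$, or $n=1$) enters, together with Lemma \ref{lemJamesUP} to see that the bijection with $\pi^{2n+2p-2}(X;\zp{})$ is a group isomorphism.

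Second, and more seriously, the crux of (a) and (b) — the splitting for $p\geq 3$ — is left as an unexecuted plan. You decline the vector-space argument on the grounds that it is ``true outright only for $p\geq 5$,'' but that is mistaken: Peterson's Theorem \ref{thmPeterson}(\ref{lemsplit:zp}) makes $\pi^m(X;\zp{})$ an $\mathbb{F}_p$-vector space for every odd $p$ whenever $\dim X\leq 2m-2$, and this range covers case (a) for all $n\geq 1$ (one needs $2n+4p-4\leq 4n+4p-6$, i.e.\ $n\geq 1$) and case (b) for $n\geq 2$. The only gap is case (b) with $n=1$, where $\dim X=2+4p-5$ exceeds Peterson's range; the paper closes it by identifying $\Omega\s{3}_{(p)}$ with the Anick space $T^{2p-1}_\infty(p)$ and invoking its H-space exponent $p$ (Theriault), so that $\pi^{2p-1}(X;\zp{})$ is again killed by $p$. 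Once the middle term of the short exact sequence is an $\mathbb{F}_p$-vector space, so are the outer terms and the sequence splits — no secondary obstruction analysis is needed. Your proposed replacement (constructing a natural section by showing a $k$-invariant-type obstruction ``controlled by $\mathcal{P}^1$'' vanishes) is never carried out and, as written, gives no actual reason for the obstruction to vanish; as it stands the splitting, which is the substantive content of (a) and (b), is unproved in your write-up.
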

 Note that Theorem \ref{mainthm:oc} gives an connection between $\pi^{2n+2p-2}(X;\zp{})$ and $\pi^{2n+1}(X;\zz{p})$.
 If $\pi^{2n+2p-2}(X;\zp{})$ is known, we can apply Theorem \ref{mainthm:oc} to determine $\pi^{2n+1}(X;\zz{p})$ (up to extension).
 
 \begin{corollary}\label{maincor}
 Let $X$ be a complex of dimension $\leq 6$.
 \begin{enumerate}[1.]
 \item There is a short exact sequence of groups:
 \begin{equation*}
    0\to T_{\partial}(X)\to \pi^3(X;\zz{2}) \xra{h^3} H^3(X;\zz{2}) \dashrightarrow \pi^5(X;\z{}),
 \end{equation*}
 where the dashed arrow here means the homomorphism is merely of sets and $T_\partial(X)$ is the cokernel of the homomorphism
 \[\partial_\sharp\colon H^2(X;\zz{2})\to[X,\s{3}_{(2)}]\cong \pi^4(X;\z{})\]
 induced by the connecting map $\partial\colon K_2(\zz{2})\to \s{3}_{(2)}$.

 \item If $\dim(X)\leq 5$, then
    \[
      h^3(\pi^3(X;\zz{2}))=\ker\big(H^3(X;\zz{2})\xra{\mathrm{Sq}^2_{\zz{2}}}H^5(X;\z{})\big)
    \]
    and there is a central group extension
 \begin{equation}\label{EXT:5-cpx}
    0\to T_\partial(X)\xra{j} \pi^3(X;\zz{2}) \xra{h^3} \ker(\mathrm{Sq}^2_{\zz{2}})\to 0,
 \end{equation}
 which is determined by the following two functions $\Gamma,\Phi_2$:
 \begin{enumerate}[(i)]
    \item $\Gamma\colon \ker(\mathrm{Sq}^2_{\zz{2}})\times \ker(\mathrm{Sq}^2_{\zz{2}})\to T_{\partial}(X)$: if $x,y\in \ker(\mathrm{Sq}^2_{\zz{2}})$, let $g,h\in \pi^3(X;\zz{2})$ such that $h^3(g)=x,h^3(h)=y$, then $\Gamma (x,y)=j^{-1}(ghg^{-1}h^{-1})$.
    \item  $\Phi_2\colon \ker(\mathrm{Sq}^2_{\zz{2}})\to T_{\partial}(X)/2T_{\partial}(X)$: If $\bar{\alpha}\in \pi^3(X;\zz{2})$ satisfies $h^3(\bar{\alpha})=\alpha=\delta(\alpha')$ for some $\alpha'\in H^2(X;\z{})$, where $\delta$ is the obvious Bockstein, then
    \begin{equation}\label{Eq:5-cpx}
      \Phi_2(\alpha)=\bar{\alpha}^2=[\partial_2\alpha']\mod 2T_{\partial}(X),
    \end{equation}
    where $\partial_2$ is given by the decomposition
    \begin{equation*}
       \partial_\sharp\colon H^2(X;\zz{2})\xra{\mod 2} H^2(X;\z{})\xra{\partial_{2\sharp}} \pi^4(X;\z{}).
    \end{equation*}
 \end{enumerate}
  \end{enumerate}
 \end{corollary}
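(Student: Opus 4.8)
The plan is to deduce the entire statement from Theorem~\ref{mainthm:oc} specialized to $n=1$, $p=2$, combined with an analysis of the nonabelian group $\pi^3(X;\zz{2})=[X,S^3_{(2)}]$. The geometric input is the principal fibration
\[\s{3}_{(2)}\xrightarrow{j} S^3_{(2)}\xrightarrow{\iota_{(2)}}K_3(\zz{2}),\]
whose fibre is the $3$-connected cover and whose connecting map is exactly $\partial\colon K_2(\zz{2})=\Omega K_3(\zz{2})\to \s{3}_{(2)}$. Applying $[X,-]$, using $M_3(\zz{2})=S^3_{(2)}$ and the identification $[X,\s{3}_{(2)}]\cong\pi^4(X;\z{})$ valid in this range, exactness at $[X,S^3_{(2)}]$ and at $[X,\s{3}_{(2)}]$ gives $\ker(h^3)=\operatorname{im}(j_\sharp)\cong[X,\s{3}_{(2)}]/\operatorname{im}(\partial_\sharp)=T_\partial(X)$, and $j_\sharp$ descends to the injection $j\colon T_\partial(X)\hookrightarrow\pi^3(X;\zz{2})$. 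This furnishes the left-exact portion $0\to T_\partial(X)\xrightarrow{j}\pi^3(X;\zz{2})\xrightarrow{h^3}H^3(X;\zz{2})$ common to both parts.

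For part 1 I would produce the dashed arrow as the obstruction to lifting a class through $\iota_{(2)}$: the primary obstruction is the first $k$-invariant $\sq{\zz{2}}(u)\in H^5(X;\z{})$, and when $\dim X=6$ a secondary obstruction in $H^6(X;\z{})$ arises; the two stages are assembled into a single element of $\pi^5(X;\z{})$ through the Hurewicz sequence of $M_5(\z{})$. Since combining the stages requires a choice of secondary coboundary, this assembled obstruction is only a map of sets, and exactness at $H^3(X;\zz{2})$ identifies $\operatorname{im}(h^3)$ with its zero set. When $\dim X\le 5$ the secondary term is absent and Theorem~\ref{mainthm:oc}(\ref{mainthm:oc-2}) already supplies the honest homomorphism $\sq{\zz{2}}$, so $\operatorname{im}(h^3)=\ker(\sq{\zz{2}})$; this is the first assertion of part 2 and upgrades the left-exact sequence to the short exact sequence~(\ref{EXT:5-cpx}).

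Next I would show (\ref{EXT:5-cpx}) is central. Any $g\in T_\partial(X)=\operatorname{im}(j_\sharp)$ factors through the $3$-connected cover, so for arbitrary $h$ the commutator $ghg^{-1}h^{-1}=\bar c\circ(g\wedge h)\circ\bar\Delta$, where $\bar c\colon S^3_{(2)}\wedge S^3_{(2)}\to S^3_{(2)}$ is the reduced Samelson commutator; since $g$ factors through the cover, $g\wedge h$ factors through the $6$-connected space $\s{3}_{(2)}\wedge S^3_{(2)}$, out of which every map from $X$ with $\dim X\le 5$ is null, so the commutator vanishes. Given centrality, the extension class lies in $H^2(Q;A)\cong\Ext(Q,A)\oplus\Hom(\Lambda^2Q,A)$ with $Q=\ker(\sq{\zz{2}})$ and $A=T_\partial(X)$; the antisymmetric summand is precisely the commutator pairing $\Gamma$, and because $A$ and all torsion of $Q\subseteq H^3(X;\zz{2})$ are $2$-primary, the symmetric summand is detected on the $2$-torsion (the image of the Bockstein $\delta$) by the squaring function $\Phi_2$, which is well defined modulo $2T_\partial(X)$ since replacing a lift $\bar\alpha$ by $\bar\alpha\cdot a$ alters $\bar\alpha^2$ by $a^2\in 2T_\partial(X)$. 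Hence $(\Gamma,\Phi_2)$ is a complete invariant of the extension.

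It remains to evaluate the two functions. For $\Gamma$, the commutator pairing is induced by the Samelson product $\langle\iota_3,\iota_3\rangle\in\pi_6(S^3)_{(2)}$ of $S^3=SU(2)$; tracing $(g\wedge h)\circ\bar\Delta$ through cohomology expresses $\Gamma(x,y)$ via the cup product $x\cup y\in H^6(X;\zz{2})$ followed by this Samelson class, landing in $T_\partial(X)$. The main obstacle is the evaluation of $\Phi_2$, i.e.\ the identity $\bar\alpha^2=[\partial_2\alpha']$ of~(\ref{Eq:5-cpx}). Here one computes the square of a lift $\bar\alpha$ of a $2$-torsion class $\alpha=\delta(\alpha')$ under the non-additive power map $P_2\colon S^3_{(2)}\to S^3_{(2)}$, $x\mapsto x^2$. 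I would split $P_2$ into its degree-$2$ part and the quadratic correction carried by the Hopf class $\eta\in\pi_4(S^3)=\z{}$ (so that $h^3(\bar\alpha^2)=2\alpha=0$ and $\bar\alpha^2\in T_\partial(X)$), and then match that correction with the connecting map $\partial_2$ by means of the factorization $\partial_\sharp=\partial_{2\sharp}\circ(\mathrm{mod}\ 2)$ and naturality of $\delta$. Carrying out this identification, which shows that the unstable squaring on $[X,S^3_{(2)}]$ is exactly carried by $\partial_2$ after reduction mod $2$, is the delicate computational core; the remaining bookkeeping that $\Gamma$ and $\Phi_2$ are respectively biadditive and quadratic is a formal consequence of centrality.
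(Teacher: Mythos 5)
Your overall architecture matches the paper's: the fibration $\s{3}_{(2)}\to S^3_{(2)}\xra{\iota_{(2)}}K_3(\zz{2})\to \B\s{3}_{(2)}$, the identifications $[X,\s{3}_{(2)}]\cong\pi^4(X;\z{})$ and $[X,\B\s{3}_{(2)}]\cong\pi^5(X;\z{})$ via Proposition \ref{propconn:oc}, and the use of Theorem \ref{mainthm:oc}(2) to get $\mathrm{im}(h^3)=\ker(\sq{\zz{2}})$ are all as in the paper. (The paper obtains centrality by citing Larmore--Thomas; your argument that a commutator with an element factoring through the $6$-connected space $\s{3}_{(2)}\wedge S^3_{(2)}$ dies on a complex of dimension $\leq 6$ is correct and pleasantly self-contained.) The genuine gap is formula (\ref{Eq:5-cpx}): you explicitly defer ``the delicate computational core,'' i.e.\ the identity $\bar\alpha^2=[\partial_2\alpha']$, and the route you sketch is not set up to produce it. On $S^3_{(2)}$ the second power map \emph{is} the degree-$2$ self-map ($[S^3,S^3]\cong\Z$ detected by degree), so there is no ``quadratic correction carried by $\eta$'' to split off at the level of self-maps; the content lies elsewhere. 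The paper's mechanism is to construct $\phi\colon K_3(\zz{2})\to S^3_{(2)}$ as a morphism from the fibration sequence of the doubling map $2\colon K_3(\zz{2})\to K_3(\zz{2})$ (whose fibre sequence contains $K_2(\z{})\to K_3(\zz{2})$) to the fibration sequence of $\iota_{(2)}$, and then --- exactly as in the \textbf{claim} inside the proof of Proposition \ref{propEXTsplit:2} --- to adjust the choice of lift so that both $\iota_{(2)}\circ\phi\simeq 2$ and $\phi\circ\iota_{(2)}\simeq 2$ hold. Once that is in place, (\ref{Eq:5-cpx}) is a diagram chase: $\bar\alpha^2=\phi_\sharp(\alpha)=\partial_{2\sharp}(\alpha')$. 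Without this (or an equivalent) construction your proof of 2(ii) does not close.

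A secondary problem is your justification that $(\Gamma,\Phi_2)$ determines the extension. The splitting $H^2(Q;A)\cong\Ext(Q,A)\oplus\Hom(\Lambda^2Q,A)$ is fine, but the $\Ext$-component attached to a $\Z/2^k$ summand of $Q$ with $k\geq 2$ takes values in $A/2^kA$, whereas $\Phi_2$ as defined only records squares of order-$2$ elements modulo $2A$; so ``the symmetric summand is detected on the $2$-torsion by $\Phi_2$'' does not follow from the argument you give for general $Q$. The paper leans on \citep[Theorem 3.1]{LT72-B} for precisely this point, so you would need either to invoke that result or to supply the missing argument. (Note also that for $\dim(X)\leq 5$ your own smash-product argument forces $\Gamma\equiv 0$, since $S^3_{(2)}\wedge S^3_{(2)}\simeq S^6_{(2)}$ is $5$-connected; this is consistent with the statement but worth recording.)
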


 \begin{theorem}\label{mainthm:Cn}
    Let $p\geq 5$ be a prime and let $n\geq 1$. For any complexes $X$ of dimension $\leq 2pn+2n-5$, there is an exact sequence of abelian groups
    \begin{multline*}
       \pi^{2n-1}(\Sigma X;\zz{p})\xra{(\Omega E^2_{(p)})_\sharp} \pi^{2n+1}(\Sigma^3X;\zz{p})
       \xra{} [X,C(n)_{(p)}]\\
       \xra{}\pi^{2n-1}(X;\zz{p})\xra{(E^2_{(p)})_\sharp}\pi^{2n+1}(\Sigma^2X;\zz{p})
       \xra{}\pi^{2pn-2}(X;\zp{}).
    \end{multline*}
 
    In particular, if $\dim(X)\leq 2pn+2n-6$, there is a natural isomorphism:
    \[ \pi^{2pn-3}(X;\zp{})\cong [X,C(n)_{(p)}]\cong T_{\Omega E^2_{(p)}}(X)\oplus K_{E^2_{(p)}}(X).\]
 
 \end{theorem}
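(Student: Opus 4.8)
The plan is to feed a test complex $X$ into the long exact sequence of the $p$-local double suspension fibration, translate every term into a $p$-local or modular cohomotopy group by loop--suspension adjunction, and then extract the two ``in particular'' isomorphisms from the cell structure of $C(n)_{(p)}$ and of its classifying space. The starting point is the defining fibration $C(n)\to S^{2n-1}\xra{E^2}\Omega^2S^{2n+1}$. Localizing at $p$ and invoking Gray's classifying space, so that $C(n)_{(p)}\simeq \Omega\B C(n)_{(p)}$, this fibration is principal and extends one step to the right; the resulting fibre sequence is
\[
\cdots\to \Omega S^{2n-1}_{(p)}\xra{\Omega E^2_{(p)}}\Omega^3S^{2n+1}_{(p)}\to C(n)_{(p)}\to S^{2n-1}_{(p)}\xra{E^2_{(p)}}\Omega^2S^{2n+1}_{(p)}\to \B C(n)_{(p)}.
\]

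First I would apply $[X,-]$ to this sequence. Via the adjunction $[X,\Omega^kY]=[\Sigma^kX,Y]$ together with the identification $S^m_{(p)}=M_m(\zz{p})$, each sphere term turns into a $p$-local cohomotopy group, $[\Sigma^kX,S^m_{(p)}]=\pi^m(\Sigma^kX;\zz{p})$, and the two boundary maps become $(\Omega E^2_{(p)})_\sharp$ and $(E^2_{(p)})_\sharp$. This reproduces verbatim the first five terms of the asserted exact sequence, with $[X,C(n)_{(p)}]$ in the middle.

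Next I would identify the final term $[X,\B C(n)_{(p)}]$ with $\pi^{2pn-2}(X;\zp{})$. By Lemma \ref{lem:Cn} the bottom two cells of $C(n)_{(p)}$ constitute $M_{2pn-3}(\zp{})$, so delooping raises dimensions by one and the bottom two cells of $\B C(n)_{(p)}$ constitute $M_{2pn-2}(\zp{})$, the next cell lying in dimension $2pn+2n-3$ (read off from the homology of the double-suspension fibre). By obstruction theory the bottom-cell inclusion $M_{2pn-2}(\zp{})\hookrightarrow \B C(n)_{(p)}$ then induces a bijection $\pi^{2pn-2}(X;\zp{})=[X,M_{2pn-2}(\zp{})]\cong[X,\B C(n)_{(p)}]$ once $\dim(X)\le 2pn+2n-5$, which is exactly the stated range and completes the exact sequence. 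Running the same argument one dimension lower, the inclusion $M_{2pn-3}(\zp{})\hookrightarrow C(n)_{(p)}$ gives $\pi^{2pn-3}(X;\zp{})\cong[X,C(n)_{(p)}]$ when $\dim(X)\le 2pn+2n-6$, and exactness presents this group as an extension
\[
0\to T_{\Omega E^2_{(p)}}(X)\to [X,C(n)_{(p)}]\to K_{E^2_{(p)}}(X)\to 0,
\]
because the image of $\pi^{2n+1}(\Sigma^3X;\zz{p})\to[X,C(n)_{(p)}]$ is $\coker\big((\Omega E^2_{(p)})_\sharp\big)=T_{\Omega E^2_{(p)}}(X)$ while the image of $[X,C(n)_{(p)}]\to\pi^{2n-1}(X;\zz{p})$ is $\ker\big((E^2_{(p)})_\sharp\big)=K_{E^2_{(p)}}(X)$.

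The main obstacle is the splitting of this last extension, and this is where I expect the hypothesis $p\ge 5$ to be essential. My plan is to prove that $[X,C(n)_{(p)}]$ is annihilated by $p$, hence is a $\zp{}$-vector space: for $p\ge 5$ the identity map of an odd mod-$p$ Moore space has order $p$, equivalently the $p$-th power map of the H-space $C(n)_{(p)}$ is null-homotopic, whereas this genuinely fails at $p=3$. Granting the exponent statement, every short exact sequence of $\zp{}$-vector spaces splits, so $T_{\Omega E^2_{(p)}}(X)$ and $K_{E^2_{(p)}}(X)$ are vector-space summands and one obtains the natural isomorphism $[X,C(n)_{(p)}]\cong T_{\Omega E^2_{(p)}}(X)\oplus K_{E^2_{(p)}}(X)$. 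The delicate points are to establish the exponent-$p$ property of $C(n)_{(p)}$ in the precise metastable range and to verify that the bottom-cell identifications are compatible with the H-space group structure, so that the displayed extension really is one of $\zp{}$-vector spaces.
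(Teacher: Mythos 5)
Your proposal follows essentially the same route as the paper: apply $[X,-]$ to Gray's $p$-local H-fibration sequence $C(n)_{(p)}\to S^{2n-1}_{(p)}\xra{E^2_{(p)}}\Omega^2S^{2n+1}_{(p)}\to \B C(n)_{(p)}$, identify $[X,C(n)_{(p)}]$ and $[X,\B C(n)_{(p)}]$ with $\pi^{2pn-3}(X;\zp{})$ and $\pi^{2pn-2}(X;\zp{})$ via the bottom Moore-space cells and a connectivity estimate, and split the resulting extension because $[X,C(n)_{(p)}]$ is a $\zp{}$-vector space. The two ``delicate points'' you flag are exactly what the paper supplies: the compatibility of the bijection with the group structures is Lemma \ref{lemJamesUP}, and the exponent-$p$ statement is quoted from Cohen--Moore--Neisendorfer (alternatively, Peterson's Theorem \ref{thmPeterson}(\ref{lemsplit:zp}) gives the vector-space structure directly).
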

 
 Our final result generalizes Peterson's exact sequence theorem (see Theorem \ref{thmPeterson} (\ref{thmUCT})) for cohomotopy with $\zp{r}$-coefficients.
 \begin{theorem}\label{mainthm:GUCT}
    Let $X$ be a finite complex of dimension $\leq 4n-3$, $n\geq 2$. Then for each prime $p\geq 5$,  there is an exact sequence of abelian groups
    \begin{equation*}
       0\to \pi^{2n-1}(X;\zz{p})\otimes\zp{r}\xra{}\pi^{2n-1}(X;\zp{r})\xra{}\pi^{2n+1}(\Sigma X;\zz{p}).
    \end{equation*}
 
    If $\dim(X)\leq 4n-4$, $\pi^{2n-1}(X;\zz{p})\otimes\zp{r}$ is a direct summand of $\pi^{2n-1}(X;\zp{r})$ and isomorphic to $\pi^{2n-1}(X)\otimes\zp{r}$.
 \end{theorem}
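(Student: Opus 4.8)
The plan is to reduce the statement to the universal-coefficient sequence for $p$-local stable cohomotopy and then remove the residual instability by means of the $p$-local James splitting of even spheres. Working throughout in the $p$-local category, so that $\pi^{2n-1}(X;\zz{p})=[X,S^{2n-1}_{(p)}]$, I start from the $p$-localized cofibre sequence
\[S^{2n-1}\xra{p^r}S^{2n-1}\xra{i}M_{2n-1}(\zp{r})\xra{q}S^{2n}\xra{p^r}S^{2n},\]
in which $i$ is the bottom-cell inclusion and $q$ the pinch onto the top cell. First I would suspend once: as $\dim\Sigma X\le 4n-2$, which is twice the connectivity of $S^{2n}$, all of $[\Sigma X,S^{2n}]$, $[\Sigma X,M_{2n}(\zp{r})]$ and $[\Sigma X,S^{2n+1}]$ lie in the Freudenthal stable range, so $[\Sigma X,-]$ turns the suspended cofibre sequence into a genuine long exact sequence. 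Since $S^{2n}$ is a suspension the degree-$p^r$ self-map acts as multiplication by $p^r$, and the short exact piece reads
\[0\to\pi^{2n}(\Sigma X;\zz{p})\otimes\zp{r}\xra{i'_\sharp}\pi^{2n}(\Sigma X;\zp{r})\xra{q'_\sharp}\Tor\big(\pi^{2n+1}(\Sigma X;\zz{p}),\zp{r}\big)\to0.\]

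Next I would descend along the suspension homomorphisms $\sigma_1\colon\pi^{2n-1}(X;\zz{p})\to\pi^{2n}(\Sigma X;\zz{p})$ and $\sigma_2\colon\pi^{2n-1}(X;\zp{r})\to\pi^{2n}(\Sigma X;\zp{r})$; the heart of the proof is that both are isomorphisms on the full range $\dim X\le 4n-3$. For $\sigma_1$ this follows at once from the odd-primary James equivalence $\Omega S^{2n}_{(p)}\simeq S^{2n-1}_{(p)}\times\Omega S^{4n-1}_{(p)}$: the complementary factor contributes $[X,\Omega S^{4n-1}_{(p)}]\cong[\Sigma X,S^{4n-1}_{(p)}]=0$, since $S^{4n-1}$ is $(4n-2)$-connected while $\dim\Sigma X\le 4n-2$. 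For $\sigma_2$ I would invoke the metastable EHP sequence for $Y=M_{2n-1}(\zp{r})$, which is $(2n-2)$-connected and hence exact on $[X,-]$ out to dimension roughly $6n-4$, covering $\dim X\le 4n-3$ as $n\ge2$. The Hopf term $[X,\Omega\Sigma Y^{\wedge2}]\cong[\Sigma X,\Sigma Y^{\wedge2}]$ vanishes because $\Sigma Y^{\wedge2}$ is $(4n-2)$-connected, giving surjectivity of $\sigma_2$, while $\ker\sigma_2$ is the image of the Whitehead-product map $P_M\colon[\Sigma^2X,\Sigma Y^{\wedge2}]\to[X,Y]$.

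The decisive point is that $P_M=0$ in this range. Here I would use naturality of the EHP $P$-map along the bottom-cell inclusion $i\colon S^{2n-1}\to M_{2n-1}(\zp{r})$. The induced $i\wedge i\colon S^{4n-2}\to Y^{\wedge2}$ is the bottom-cell inclusion, so $\Sigma(i\wedge i)\colon S^{4n-1}\to\Sigma Y^{\wedge2}$ realizes the only cell that $[\Sigma^2X,-]$ can detect when $\dim\Sigma^2X\le 4n-1$; consequently $[\Sigma^2X,S^{4n-1}]\to[\Sigma^2X,\Sigma Y^{\wedge2}]$ is an isomorphism and $P_M$ is completely determined by $P_{S^{2n-1}}$. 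But $P_{S^{2n-1}}$ is carried by the Whitehead square $[\iota_{2n-1},\iota_{2n-1}]\in\pi_{4n-3}(S^{2n-1})$, which is $2$-torsion on an odd sphere and therefore zero after $p$-localization for $p\ge5$; hence $P_M=0$ and $\sigma_2$ is an isomorphism. Transporting the suspended short exact sequence across the isomorphisms $\sigma_1\otimes\zp{r}$ and $\sigma_2$---the left comparison square commuting because $\Sigma i=i'$ and the right one because the comparison map is $q'_\sharp\circ\sigma_2$---the five lemma yields the asserted sequence $0\to\pi^{2n-1}(X;\zz{p})\otimes\zp{r}\to\pi^{2n-1}(X;\zp{r})\to\pi^{2n+1}(\Sigma X;\zz{p})$, the last arrow being suspension followed by the top-cell pinch, with image the torsion subgroup.

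Finally, for $\dim X\le 4n-4$ the identification $\pi^{2n-1}(X;\zz{p})\otimes\zp{r}\cong\pi^{2n-1}(X)\otimes\zp{r}$ is bookkeeping: in the strict stable range $\pi^{2n-1}(X;\zz{p})=\pi^{2n-1}(X)_{(p)}$, and tensoring with $\zp{r}$ already $p$-localizes. For the direct-summand claim I would use that $p\ge5$ forces $p^r\cdot\mathrm{id}_{M_{2n-1}(\zp{r})}=0$, so $\pi^{2n-1}(X;\zp{r})$ is a $\zp{r}$-module and the sequence is one of $\zp{r}$-modules; the splitting is then produced by the same strategy Peterson uses for Theorem \ref{thmPeterson}(\ref{thmUCT}). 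I expect the genuine obstacle to be the vanishing $\ker\sigma_2=0$ at the metastable edge $\dim X=4n-3$---the EHP/Whitehead-square step---since this is exactly where the hypothesis $p\ge5$ is used and where an unstable correction could otherwise persist; the splitting for $\dim X\le 4n-4$ is a secondary technical matter.
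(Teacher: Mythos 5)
Your route is genuinely different from the paper's: the paper never suspends into the stable range, but instead uses Anick's fibration $\Omega^2S^{2n+1}_{(p)}\xra{\phi_r}S^{2n-1}_{(p)}\to \T{2n-1}\to\Omega S^{2n+1}_{(p)}$ together with the fact that the canonical map $M_{2n-1}(\zp{r})\to\T{2n-1}$ is $(4n-2)$-connected --- one dimension better than the Freudenthal map $E\colon M_{2n-1}(\zp{r})\to\Omega M_{2n}(\zp{r})$, which is only $(4n-3)$-connected because the fibre of $E$ has $\pi_{4n-3}\cong\zp{r}$, detected by $u_{2n-1}^2\in H_{4n-2}(\Omega\Sigma M_{2n-1}(\zp{r});\zp{})$, a class that does \emph{not} die at odd primes. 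That extra degree of connectivity is precisely what handles $\dim X=4n-3$, and the homotopy-commutative H-structure and H-space exponent $p^r$ of $\T{2n-1}$ (Theriault) supply both the abelian group structure and the splitting. The paper's remark after the theorem statement --- that at $\dim X=4n-3$ the group structure is \emph{not} obtained from Freudenthal suspension --- is exactly a warning against the step your proof leans on.

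The decisive gap is your claim that $\sigma_2$ is a bijection at the edge $\dim X=4n-3$. Surjectivity is fine, and your naturality argument (every map $\Sigma^2X\to\Sigma Y^{\wedge2}$ factors through the bottom cell $S^{4n-1}$, where $P$ reduces to the $2$-torsion Whitehead square) does show that the \emph{basepoint} fibre of $E_\sharp$ is trivial, granting the naturality of the metastable EHP sequence in $Y$, which itself needs a citation. But at this dimension $[X,M_{2n-1}(\zp{r})]$ carries no a priori group structure (that would require $\dim X\le 4n-4$), so ``trivial kernel'' does not yield injectivity: the fibres of $E_\sharp$ are the orbits of the holonomy action of $[\Sigma^2X,\Sigma Y^{\wedge2}]$ on $[X,Y]$, and vanishing of the connecting map on the relevant skeleton controls only the orbit of the basepoint, not the orbits of arbitrary elements. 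Without an independent group structure or an explicit Barcus--Barratt-type analysis of the action, the argument does not close --- and closing it is essentially as hard as what Anick's space is imported to do. A secondary gap: for the splitting you invoke $p^r\cdot\mathrm{id}_{M_{2n-1}(\zp{r})}=0$, but that is a \emph{co-H} exponent of the source of the identity; post-composition with a co-H-sum is not multiplication by $p^r$ on $[X,M_{2n-1}(\zp{r})]$, so it does not make that set a $\zp{r}$-module. The paper instead uses that $\T{2n-1}$ has \emph{H-space} exponent $p^r$, which does annihilate $[X,\T{2n-1}]$.
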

 Note that if $\dim(X)=4n-3$, the group structure of $\pi^{2n-1}(X;\zp{r})$ is not obtained by Freudenthal's suspension isomorphism theorem; the details refer to Section~\ref{sec:Anick}.
 
 The paper is organized as follows. Section \ref{sec:prelim} covers Peterson's main results on the generalized cohomotopy groups and some auxiliary lemmas.
 In Section \ref{sec: oldtools} we give the proof of Theorem \ref{mainthm1} and compute partial cohomotopy groups of the projective spaces $\FP^n$ ($\mathbb{F}=\mathbb{C},\mathbb{H}$) and $M_n(\z{s})$ as examples.
 In Section \ref{sec:unstHtp} we combine the classic unstable homotopy theory of Moore spaces to prove the remaining results.

 \section*{Acknowledgement}
 The first author was partially supported by Natural Science Foundation of China under Grant 12101290 and Project funded by China Postdoctoral Science Foundation under Grant 2021M691441, the second author was partially supported by Natural Science Foundation of China under Grant 11971461, and the third author was partially supported by Natural Science Foundation of China under Grant 11971144 and High-level Scientific Research Foundation of Hebei Province.
 The authors would like to thank Daciberg Goncalves for fruitful communications on the topic of cohomotopy, and thank the referee(s) gratefully for useful and detailed comments on a previous version of the manuscript.
 
 \section{Preliminaries}\label{sec:prelim}
 In this section we firstly summarize Peterson's primary work on the generalized cohomotopy groups and then list some lemmas that will be used in the latter sections.
 
 \subsection{Peterson's generalized cohomotopy groups}\label{sec:Peterson}
 
  \begin{theorem}[Peterson, \cite{Peterson56-2}]\label{thmPeterson}
 Let $(X,A)$ be a CW pair of dimension $N\leq 2n-2$, $n\geq 2$. Let $G$ be an abelian group.
 \begin{enumerate}[1.]
    \item\label{thmACH} The generalized cohomotopy groups $\pi^n(X,A;G)$ satisfies all the Eilenberg–Steenrod axioms for cohomology theory \cite{ES52}.
    \item\label{lemsplit:zp} If $G$ is a field without $2$-torsion, then $\pi^n(X,A;G)$ is a vector space over $G$.
    \item\label{thmUCT} If $G$ is finitely generated or $(X,A)$ is finite, then there is a \emph{universal coefficients exact sequence} for each $r> (N+1)/2$:
    \begin{equation}\label{SES}
       0\to \pi^r(X,A)\otimes G\xra{}\pi^r(X,A;G)\xra{}\mathrm{Tor}(\pi^{r+1}(X,A),G)\to 0.
    \end{equation}
    This sequence is natural with respect to maps $f\colon (X,A)\to (Y,B)$ and if $G$ has no $2$-torsion, then it is natural with respect to homomorphisms $\varphi\colon G\to H$. Furthermore, the extension (\ref{SES}) splits if $\pi^r(X,A)$ is finitely generated and $G$ has no $2$-torsion.
 
 \end{enumerate}
 
  \end{theorem}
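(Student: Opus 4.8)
The plan is to realize all three groups as homotopy classes of maps into $p$-local spaces and to read the sequence off Anick's fibration, the boundary dimension $\dim X=4n-3$ being precisely where the stable (Peterson) machinery breaks down. First I would record the identifications. Since $\zz{p}$ and $\zp{r}$ are $p$-local, the relevant Moore spaces are $p$-local, and $M_{2n-1}(\zz{p})=S^{2n-1}_{(p)}$, $M_{2n+1}(\zz{p})=S^{2n+1}_{(p)}$ are $p$-local spheres, so that
\[\pi^{2n-1}(X;\zz{p})=[X,S^{2n-1}_{(p)}],\qquad \pi^{2n+1}(\Sigma X;\zz{p})=[X,\Omega S^{2n+1}_{(p)}],\]
while $\pi^{2n-1}(X;\zp{r})=[X,M_{2n-1}(\zp{r})]$. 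For $p\geq 5$ I would then invoke Anick's fibration (Section~\ref{sec:Anick})
\[S^{2n-1}_{(p)}\xra{\iota}\T{2n-1}\xra{\pi}\Omega S^{2n+1}_{(p)},\]
where $\T{2n-1}$ is an H-space whose bottom cells assemble, via the $p^r$-twisting, to $M_{2n-1}(\zp{r})$. Since the next cell of $\T{2n-1}$ lies in dimension $4n-1$, the skeletal inclusion $M_{2n-1}(\zp{r})\hookrightarrow\T{2n-1}$ is $(4n-2)$-connected and hence induces a bijection $[X,M_{2n-1}(\zp{r})]\cong[X,\T{2n-1}]$ whenever $\dim X\leq 4n-3$. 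Because $\T{2n-1}$, $\Omega S^{2n+1}_{(p)}$ and $\Omega^2 S^{2n+1}_{(p)}$ are H-spaces, applying $[X,-]$ to the fibration yields an exact sequence of groups
\[[X,\Omega^2 S^{2n+1}_{(p)}]\xra{\partial}[X,S^{2n-1}_{(p)}]\xra{\iota_\sharp}[X,\T{2n-1}]\xra{\pi_\sharp}[X,\Omega S^{2n+1}_{(p)}],\]
and transporting the middle term across the skeletal bijection recovers the arrows of the theorem, with $\pi_\sharp$ the second map.

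The crux is to identify $\ker(\iota_\sharp)=\operatorname{im}(\partial)$ with $p^r[X,S^{2n-1}_{(p)}]$, so that $\iota_\sharp$ descends to the asserted injection $\pi^{2n-1}(X;\zz{p})\otimes\zp{r}\hookrightarrow\pi^{2n-1}(X;\zp{r})$ with image $\ker(\pi_\sharp)$. Here $\partial$ is postcomposition with the connecting map $\delta\colon\Omega^2 S^{2n+1}_{(p)}\to S^{2n-1}_{(p)}$, and the defining feature of Anick's fibration is that $\delta\circ E^2\simeq p^r$, where $E^2$ is the double suspension. This gives $p^r[X,S^{2n-1}_{(p)}]\subseteq\operatorname{im}(\partial)$ at once. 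For the reverse inclusion I would use that, for $p\geq 5$, the fibre $C(n)$ of $E^2$ has bottom cells $M_{2pn-3}(\zp{})$ (Lemma~\ref{lem:Cn}), hence is $(2pn-4)$-connected, so $E^2$ is $(2pn-3)$-connected and $E^2_\sharp\colon[X,S^{2n-1}_{(p)}]\to[X,\Omega^2 S^{2n+1}_{(p)}]$ is bijective for $\dim X\leq 2pn-4$; as $2pn-4\geq 4n-3$ for $n\geq 2$, $p\geq 5$, this bijectivity holds throughout our range. Thus every element of $[X,\Omega^2 S^{2n+1}_{(p)}]$ is $E^2\circ g$, and $\partial(E^2\circ g)=(\delta\circ E^2)_\sharp g=p^r g$, giving $\operatorname{im}(\partial)\subseteq p^r[X,S^{2n-1}_{(p)}]$. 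The same bijection $E^2_\sharp$ transports the stable abelian-group structure from $\pi^{2n+1}(\Sigma^2 X;\zz{p})=[\Sigma^2 X,S^{2n+1}_{(p)}]$ onto $\pi^{2n-1}(X;\zz{p})$, which legitimises treating the leftmost term as an abelian group even at the boundary dimension. Assembling these facts yields the exact sequence of the theorem.

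For the refined statement when $\dim X\leq 4n-4$, I would appeal directly to Peterson. In this range $\dim X\leq 2(2n-1)-2$, so Theorem~\ref{thmPeterson}~(\ref{thmUCT}) applies with $G=\zp{r}$; since $X$ is finite and $\zp{r}$ has no $2$-torsion for $p\geq 5$, the universal-coefficient sequence splits and exhibits $\pi^{2n-1}(X)\otimes\zp{r}$ as a direct summand of $\pi^{2n-1}(X;\zp{r})$. The identification $\pi^{2n-1}(X)\otimes\zp{r}\cong\pi^{2n-1}(X;\zz{p})\otimes\zp{r}$ is then immediate from $\pi^{2n-1}(X;\zz{p})=\pi^{2n-1}(X)\otimes\zz{p}$ and the fact that $-\otimes\zp{r}$ sees only the $p$-primary part.

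I expect the main obstacle to be the boundary behaviour at $\dim X=4n-3$: verifying that the connecting map genuinely satisfies $\delta\circ E^2\simeq p^r$ and pinning down the exact connectivities so that both the skeletal bijection $[X,M_{2n-1}(\zp{r})]\cong[X,\T{2n-1}]$ and the double-suspension bijection $E^2_\sharp$ persist up to $\dim X=4n-3$, rather than only to $4n-4$. This is exactly the regime in which the cohomotopy set need not be a group for Freudenthal reasons, so care is required to guarantee that all three terms are honest abelian groups and that $\iota_\sharp,\pi_\sharp$ are homomorphisms; the H-space structure on Anick's space $\T{2n-1}$, available only for $p\geq 5$, is what ultimately supplies the group structure on $\pi^{2n-1}(X;\zp{r})$ in this missing dimension.
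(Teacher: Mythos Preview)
Your proposal is not a proof of Theorem~\ref{thmPeterson} at all---that result is Peterson's 1956 theorem, cited by the paper without proof, and indeed you invoke it yourself as a tool. What you have written is a proof of Theorem~\ref{mainthm:GUCT}. I will evaluate it as such.

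As a proof of Theorem~\ref{mainthm:GUCT}, your argument is correct and follows essentially the same route as the paper: both use Anick's fibration $S^{2n-1}_{(p)}\to\T{2n-1}\to\Omega S^{2n+1}_{(p)}$, the $(4n-2)$-connected skeletal inclusion $M_{2n-1}(\zp{r})\hookrightarrow\T{2n-1}$ (Lemma~\ref{lemAnick:conn}), the relation $\phi_r\circ E^2_{(p)}\simeq p^r$ from Lemma~\ref{lemTh}, and the $(2pn-3)$-connectivity of $E^2_{(p)}$ to identify $T_{\phi_r}(X)\cong\pi^{2n-1}(X;\zz{p})\otimes\zp{r}$. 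The paper also explicitly invokes homotopy commutativity of $\T{2n-1}$ (from \cite{Theriault2001}) to make $[X,\T{2n-1}]$ abelian at the boundary dimension, which you allude to but should state more sharply---your remark that the H-structure is ``available only for $p\geq 5$'' is slightly off, since $\T{2n-1}$ is an H-space for $p\geq 3$; it is the homotopy commutativity and the H-space exponent $p^r$ that require $p\geq 5$.

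The one genuine difference is in the direct-summand step. You deduce splitting for $\dim X\leq 4n-4$ by a direct appeal to Peterson's universal coefficient theorem (Theorem~\ref{thmPeterson}~(\ref{thmUCT})), which is clean and sufficient for the statement as written. The paper instead argues splitting via the H-space exponent $p^r$ of $\T{2n-1}$ together with an explicit generator argument in the style of \cite[Lemma~7.3]{Peterson56-2}; this is more work, but it establishes that $T_{\phi_r}(X)$ is a direct summand already for $\dim X\leq 4n-3$, i.e.\ one dimension higher than the stable range where Peterson applies. Your shortcut loses that extra dimension, though the theorem as stated does not claim it.
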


 \subsection{Some lemmas}\label{sec:lems}  Let $n\geq 0$. Recall that a space $X$ is \emph{$n$-connected} if $\pi_i(X)=0$ for all $i\leq n$; a map $f\colon X\to Y$ is \emph{$n$-connected} if the homotopy fibre of $f$ is $(n-1)$-connected.
 
 The following lemma is well-known as the universal property of the James space $J(X)\simeq \Omega\Sigma X$ \cite{James55}:
 \begin{lemma}\label{lemJamesEXT}
 Let $f\colon X\to Y$ be a map of spaces where $Y$ is a homotopy associative H-space. Then $f$ extends to an H-map $\bar{f}\colon\Omega\Sigma X\to Y$ which is unique up to homotopy.
 \end{lemma}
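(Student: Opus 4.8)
The plan is to realize $\Omega\Sigma X$ by James's reduced product (free topological monoid) model $J(X)$ and to build $\bar f$ by induction along the James filtration. Recall from \cite{James55} that $J(X)=\coprod_{k\ge 0}X^{\times k}/\!\sim$, where the relation deletes any basepoint coordinate; concatenation of words makes $J(X)$ a strictly associative topological monoid with strict unit the empty word, the inclusion $\iota\colon X=J_1(X)\hookrightarrow J(X)$ of length-one words realizes the bottom filtration, $J(X)\simeq\Omega\Sigma X$ as H-spaces, and for each $k$ the square with corners the fat wedge $FT^k(X)\subset X^{\times k}$, the product $X^{\times k}$, $J_{k-1}(X)$ and $J_k(X)$ is a pushout, with cofibre $J_k(X)/J_{k-1}(X)\cong X^{\wedge k}$. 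Replacing $Y$ by an equivalent H-space we may assume its unit is strict, and since $f$ is based we have $f(\ast)=\ast$.

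First I would construct $\bar f$ filtration by filtration. Set $\bar f|_{J_1(X)}=f$. Assuming $\bar f_{k-1}\colon J_{k-1}(X)\to Y$ has been defined extending $f$, I would define $g_k\colon X^{\times k}\to Y$ to be the left-normed product $g_k(x_1,\dots,x_k)=(\cdots(f(x_1)\,f(x_2))\cdots)f(x_k)$ formed with the multiplication $\mu$ of $Y$. On $FT^k(X)$ some coordinate equals $\ast$; since $f(\ast)=\ast$ is a strict unit, $g_k$ there is the left-normed product of the surviving factors, which, after reconciling the bracketing by homotopy-associativity of $\mu$, is homotopic to $\bar f_{k-1}$ composed with the deletion map $FT^k(X)\to J_{k-1}(X)$. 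Using the homotopy extension property for the cofibration $FT^k(X)\hookrightarrow X^{\times k}$ I would deform $g_k$ to a map agreeing strictly with $\bar f_{k-1}\circ(\text{deletion})$ on $FT^k(X)$; the pushout property then glues it with $\bar f_{k-1}$ to an extension $\bar f_k\colon J_k(X)\to Y$, and the colimit over $k$ produces $\bar f\colon J(X)\to Y$ extending $f$.

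Next I would verify that $\bar f$ is an H-map, i.e. $\bar f\circ\mu_J\simeq\mu\circ(\bar f\times\bar f)$ for the concatenation product $\mu_J$ on $J(X)$. Restricted to $J_a(X)\times J_b(X)$ both composites are, up to homotopy, the iterated $\mu$-product of the values of $f$ on the letters of the concatenated word, differing only by the bracketing, so homotopy-associativity of $\mu$ identifies them; an induction over the evident bifiltration of $J(X)\times J(X)$, again using the homotopy extension property to promote the stagewise homotopies to a global one, gives the required homotopy. For uniqueness, if $g\colon J(X)\to Y$ is any H-map with $g\circ\iota\simeq f$, then the H-map property forces $g|_{J_k(X)}$ to be homotopic to the left-normed product of $f$ on the letters, i.e. to $\bar f_k$, by induction on $k$, and splicing these relative homotopies yields $g\simeq\bar f$.

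The main obstacle throughout is purely the coherence bookkeeping at the fat-wedge gluing: the naive product $g_k$ matches $\bar f_{k-1}\circ(\text{deletion})$ only after one invokes homotopy-associativity to undo the discrepancy in bracketings, and one must keep these homotopies compatible as $k$ grows so that the inductively chosen deformations assemble. This is precisely where the hypotheses that $\mu$ is homotopy-associative and that $f$ is unital are used, and it is the only place where genuine work, rather than formal manipulation, is required; alternatively one may simply invoke James's theorem \cite{James55} directly, of which this is the standard statement.
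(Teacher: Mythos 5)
The paper does not actually prove this lemma: it is stated as the well-known universal property of the James construction and attributed to \cite{James55}, so there is no internal argument to compare yours against. Your sketch is the standard proof of that classical result and the strategy is sound, but let me record two points. First, the existence step is easier than you make it: once the unit of $Y$ is strict (and strictly two-sided), the left-normed product $g_k(x_1,\dots,x_k)=(\cdots(f(x_1)f(x_2))\cdots)f(x_k)$ is \emph{strictly} compatible with deletion of basepoint letters, since deleting a strict unit from a left-normed product leaves the left-normed product of the remaining factors on the nose; so the $g_k$ descend directly to the quotients $J_k(X)$ and assemble to $\bar f$ with no homotopy extension argument and no use of associativity. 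Homotopy-associativity is needed only for the H-map property and for uniqueness. Second, and in the other direction, the uniqueness paragraph is where your sketch understates the work: the induction produces, for each $k$, \emph{some} homotopy $g|_{J_k(X)}\simeq \bar f_k$, but to pass to the colimit you need the stage-$k$ homotopy to extend the stage-$(k-1)$ one rel $J_{k-1}(X)$, and that compatibility does not follow from the mere existence of the individual homotopies (note that the H-map homotopy of $g$, restricted to words whose last letter is the basepoint, is a possibly nonconstant self-homotopy of $g$). This is the same coherence issue you correctly flag for the H-map property, and it is resolved in the standard treatments (James's original paper; G.\,W.~Whitehead, \emph{Elements of Homotopy Theory}, Ch.~VII, \S 2), but the phrase ``splicing these relative homotopies'' hides it. Since the paper itself simply cites the result, either invoking \cite{James55} directly or supplying this compatibility along the lines of those references would close the argument.
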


 \begin{lemma}\label{lemJamesUP}
 Let $f\colon X\to Y$ be an $n$-connected map, where  $X$ is $m$-connected and $Y$ is a homotopy associative H-space, $m,n\geq 1$.  Then for any space $W$ of dimension $\leq \min\{n-1,2m\}$, the induced map
    \[f_\sharp\colon [W,X]\to [W,Y]\]
 is an isomorphism, where the group structure of $[W,X]$ is induced by the loop suspension $E\colon X\to \Omega \Sigma X$ and that of $[W,Y]$ is induced by the H-space structure of $Y$.
 \begin{proof}
    By \citep[Proposition 2.4.6]{Arkowitzbook}, the induced map $f_\sharp\colon [W,X]\to [W,Y]$ is a bijection.
    Since $X$ is $m$-connected, the loop suspension $E=E_X\colon X\to \Omega \Sigma X$ is $(2m+1)$-connected and hence induces a bijection
    \[E_\sharp\colon [W,X]\xra{1:1}[W,\Omega \Sigma X],\]
    which provides $[W,X]$ a group structure.  By Lemma \ref{lemJamesEXT}, the map $f$ extends to an H-map $\bar{f}\colon\Omega\Sigma X\to Y$ and hence the group homomorphism
    \[\bar{f}_\sharp\colon [W,\Omega\Sigma X]\to [W,Y].\]
    Since $\bar{f}_\sharp$ is bijective, it is an isomorphism of groups.
 \end{proof}
 \end{lemma}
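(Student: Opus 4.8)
The plan is to factor $f_\sharp$ through the loop suspension and then invoke the James extension property (Lemma \ref{lemJamesEXT}) to recognize the relevant map as a group homomorphism. Bijectivity and homomorphism-ness are handled separately, and the dimension hypothesis $\dim W\leq\min\{n-1,2m\}$ is precisely what feeds both halves.

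First I would establish that $f_\sharp\colon[W,X]\to[W,Y]$ is a bijection of sets. Since $f$ is $n$-connected and $\dim W\leq n-1$, a standard Whitehead-type obstruction argument (the obstructions to existence and uniqueness of lifts/compressions live in cohomology of $W$ with coefficients in the homotopy groups of the fibre, which vanishes in the relevant range) shows that $f$ induces a bijection on homotopy classes out of $W$. This step uses only the connectivity of $f$ and the dimension of $W$, not any H-structure.

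Second I would pin down the two group structures so that the comparison even makes sense. Because $X$ is $m$-connected, Freudenthal's theorem makes the loop suspension $E\colon X\to\Omega\Sigma X$ a $(2m+1)$-connected map, so the bound $\dim W\leq 2m$ forces $E_\sharp\colon[W,X]\to[W,\Omega\Sigma X]$ to be a bijection. The target $\Omega\Sigma X$ is a loop space, so $[W,\Omega\Sigma X]$ carries its canonical group structure, and transporting this structure along $E_\sharp$ is by definition the group structure placed on $[W,X]$ in the statement; thus $E_\sharp$ is tautologically a group isomorphism. On the other side, the homotopy-associative H-multiplication on $Y$ supplies the monoid (group) structure on $[W,Y]$.

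Third, and this is the crux, I would promote the set bijection $f_\sharp$ to a homomorphism. Since $Y$ is a homotopy-associative H-space, Lemma \ref{lemJamesEXT} extends $f$ to an H-map $\bar f\colon\Omega\Sigma X\to Y$ with $\bar f\circ E\simeq f$, whence $f_\sharp=\bar f_\sharp\circ E_\sharp$. An H-map respects the multiplications up to homotopy, so $\bar f_\sharp\colon[W,\Omega\Sigma X]\to[W,Y]$ is a genuine group homomorphism; composing it with the isomorphism $E_\sharp$ shows $f_\sharp$ is a homomorphism, and together with the first step this makes $f_\sharp$ an isomorphism. I expect the main obstacle to be this last step: one must verify that the abstractly transported group structure on $[W,X]$ is the one compatible with the H-map extension, i.e.\ that the purely set-theoretic factorization $f_\sharp=\bar f_\sharp\circ E_\sharp$ is simultaneously a factorization through a homomorphism. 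The hypothesis $\dim W\leq\min\{n-1,2m\}$ is exactly what is needed to make both factors behave: the bound $n-1$ delivers bijectivity of $f_\sharp$, while the bound $2m$ legitimizes the suspension identification $E_\sharp$.
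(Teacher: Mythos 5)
Your proposal is correct and follows essentially the same route as the paper: bijectivity of $f_\sharp$ from the connectivity of $f$ (the paper cites \citep[Proposition 2.4.6]{Arkowitzbook} where you sketch the obstruction argument), the identification $E_\sharp\colon[W,X]\to[W,\Omega\Sigma X]$ via Freudenthal to define the group structure, and the factorization $f_\sharp=\bar f_\sharp\circ E_\sharp$ through the H-map extension of Lemma \ref{lemJamesEXT} to get the homomorphism property. No substantive differences.
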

 
 
 

 
 \begin{lemma}[Serre, cf.~Theorem 6.4.4 of \cite{Arkowitzbook}]\label{lemserre}
    Let $F\xra{i} E\xra{\pi} B$ be a (homotopy) fibration, in which $F$ is $r$-connected, $B$ is $s$-connected, $r\geq 1,s\geq 0$. Then for any coefficient group $G$, there is an exact sequence of cohomology groups:
    \begin{multline*}
       H^0(B;G)\xra{\pi^\ast}H^0(E;G)\xra{i^\ast}H^0(F;G)\xra{\delta^0}H^1(B;G)\xra{}\cdots\\
       \to H^{N-1}(F)\xra{\delta^{N-1}}H^{N}(B;G)\xra{\pi^\ast}H^N(E;G)\xra{i^\ast}H^N(F;G),
    \end{multline*}
    where $N=r+s+1$.
 \end{lemma}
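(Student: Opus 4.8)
The plan is to read the sequence off the cohomology Serre spectral sequence of the fibration $F\xra{i}E\xra{\pi}B$, whose structure becomes lacunary in the stated range. First I would set up
\[
E_2^{p,q}=H^p\bigl(B;\mathcal{H}^q(F;G)\bigr)\Rightarrow H^{p+q}(E;G),
\]
where $\mathcal{H}^q(F;G)$ denotes the a priori twisted local system given by the action of $\pi_1(B)$ on $H^q(F;G)$. When $s\geq 1$ the base is simply connected, so the system is untwisted and $E_2^{p,q}=H^p(B;H^q(F;G))$; when $s=0$ the range $N=r+1$ is short enough that the twist never intervenes, as explained at the end. The two connectivity hypotheses translate into the vanishing $E_2^{p,q}=0$ for $1\leq q\leq r$ (fiber) and for $1\leq p\leq s$ (base). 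Consequently the only nonzero groups in total degree $\leq N$ lie on the base edge $q=0$, where $E_2^{p,0}=H^p(B;G)$, and the fiber edge $p=0$, where $E_2^{0,q}=H^q(F;G)$, since the first interior entry $E_2^{s+1,r+1}$ already sits in total degree $N+1$.

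Next I would match the three maps in the sequence with the edge data: $\pi^\ast$ with the base-edge composite $H^n(B;G)=E_2^{n,0}\twoheadrightarrow E_\infty^{n,0}\hookrightarrow H^n(E;G)$, the restriction $i^\ast$ with the fiber-edge composite $H^n(E;G)\twoheadrightarrow E_\infty^{0,n}\hookrightarrow E_2^{0,n}=H^n(F;G)$, and $\delta^n$ with the transgression $d_{n+1}\colon E_{n+1}^{0,n}\to E_{n+1}^{n+1,0}$. A short bookkeeping of bidegrees shows that, for $n\leq N$, the fiber-edge column $(0,n)$ supports no differential other than the transgression and receives none, while the base-edge row $(n,0)$ receives only the transgression and emits none; hence $E_{n+1}^{0,n}=H^n(F;G)$ and $E_{n+1}^{n+1,0}=H^{n+1}(B;G)$ on the nose, so $\delta^n$ is literally the transgression, defined for $n\leq N-1$.

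From here the three exactness statements drop out of the standard edge formalism. Since no interior terms survive in total degree $\leq N$, the group $H^n(E;G)$ carries only a two-step filtration whose bottom subgroup is $E_\infty^{n,0}=\operatorname{im}\pi^\ast$ and whose top quotient is $E_\infty^{0,n}$; this gives exactness at $H^n(E)$, namely $\ker i^\ast=\operatorname{im}\pi^\ast$. The identifications $E_\infty^{n,0}=H^n(B;G)/\operatorname{im}\delta^{n-1}$ and $E_\infty^{0,n}=\ker\delta^n$ then yield exactness at $H^n(B)$, i.e.\ $\ker\pi^\ast=\operatorname{im}\delta^{n-1}$, and at $H^n(F)$, i.e.\ $\operatorname{im} i^\ast=\ker\delta^n$, up to the last displayed term $H^N(F;G)$, at which no exactness is asserted. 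The bottom of the range, where all three $H^0$ groups are $G$ and $\delta^0=0$, is the degenerate case of the same picture.

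The one point requiring genuine care --- and the step I would flag as the main obstacle --- is the possible nontriviality of the $\pi_1(B)$-action when $s=0$, since only then is the coefficient system $\mathcal{H}^q(F;G)$ not known a priori to be trivial. In that case $N=r+1$, and the only fiber groups occurring in the range are $H^0(F;G)=G$, on which the action is automatically trivial, and the terminal group $H^N(F;G)$, at which the sequence merely ends with the restriction $i^\ast$ and no exactness (nor any surjectivity onto the invariants) is claimed. Thus the twisted coefficients never enter an exactness assertion, and the argument goes through uniformly for all $r\geq 1$ and $s\geq 0$.
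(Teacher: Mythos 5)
Your argument is correct: the lacunary Serre spectral sequence, with only edge terms and hence a two-step filtration on $H^n(E;G)$ in total degrees $\leq N=r+s+1$, is the standard proof of this lemma, and your handling of the possibly twisted coefficients when $s=0$ (where the only fiber groups in range carrying an exactness assertion are $H^0(F;G)=G$ and the vanishing groups $H^q(F;G)$ for $1\leq q\leq r$) is exactly the point that needs care. The paper gives no proof of this statement, quoting it from Arkowitz, so there is nothing to compare beyond noting that your route is the expected one. One pedantic remark: for $n=N$ the column $(0,N)$ can support a differential $d_{s+1}$ into the interior entry $E_{s+1}^{s+1,r+1}$ of total degree $N+1$, so your claim that only the transgression leaves the fiber edge holds literally only for $n\leq N-1$; this is harmless, since no exactness is asserted at $H^N(F;G)$ and $E_\infty^{0,N}$ still injects into $H^N(F;G)$, which is all that the exactness at $H^N(E;G)$ requires.
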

 
 \begin{lemma}[cf.~Theorem 10.3 of \cite{May70}]\label{lemchEM}
  Let $p$ be a prime, $1\leq r\leq \infty$ and by convention, let $\zp{\infty}=\Z$.   Let $\iota_n\in H^n(K_n(\zp{r});\zp{})$ be the fundamental class.
 $H^\ast(K_n(\zp{r});\zp{})$ is a free, graded-commutative algebra, whose generators $\mathcal{P}_r^I\iota_n$ with associated excess $e=e(I)\leq 5$ are listed as follows:
    \begin{center}
       \begin{tabular}{c|l}
          \hline
          $e$&$\mathcal{P}_r^I$\\
          \hline
          $0$&$id$\\
          \hline
          $1$& $\beta_r,\mathcal{P}^1\beta_r,\cdots$\\
          \hline
          $2$& $\mathcal{P}^1,\beta_1\mathcal{P}^1\beta_r\cdots$\\
          \hline
          $3$& $\beta_1\mathcal{P}^1, \mathcal{P}^2\beta_r,\cdots$\\
          \hline
          $4$& $\mathcal{P}^2,\cdots$\\
          \hline
          $5$& $\beta_1\mathcal{P}^2,\cdots$\\
          \hline
       \end{tabular}
    \end{center}
    where $\beta_r$ is the $r$-{th} Bockstein associated to the short exact sequence
    \[0\to \zp{}\xra{\cdot p^r}\zp{r+1}\to \zp{r}\to 0\]
 for $r<\infty$ and $\beta_\infty=0$. Note if $p=2$, $\beta_1\mathrm{Sq}^i=\mathrm{Sq}^1\mathrm{Sq}^i=(i-1) \mathrm{Sq}^{i+1}$.

 \end{lemma}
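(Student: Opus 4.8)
The plan is to deduce the lemma from the classical Cartan--Serre computation of the mod $p$ cohomology of Eilenberg--MacLane spaces and then read off the generators of small excess. First I would invoke the structure theorem: by Serre for $p=2$ and Cartan for $p$ odd, the algebra $H^\ast(K(\zp{r},n);\zp{})$ is free graded-commutative (polynomial on even-degree generators and exterior on odd-degree ones when $p$ is odd; polynomial when $p=2$) on a set of classes of the form $\theta\iota_n$, where $\theta$ runs over admissible monomials in the Steenrod operations. The underlying mechanism is the Serre spectral sequence of the path--loop fibration $K(\zp{r},n-1)\to PK(\zp{r},n)\to K(\zp{r},n)$ together with Borel's transgression theorem, inducting on $n$ from the explicitly known cases $n=1$; I would treat this input as standard and cite it rather than redo it.

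Second, I would isolate precisely what distinguishes $\zp{r}$-coefficients from the integral case, which is entirely governed by the bottom Bockstein. The class $\iota_n$ is the mod $p$ reduction of the $\zp{r}$ fundamental class; since that class already carries a $\zp{r}$-lift and hence (for $r\ge 2$) a $\zp{2}$-lift, the ordinary mod $p$ Bockstein satisfies $\beta_1\iota_n=0$, whereas the $r$-th Bockstein $\beta_r$ of $0\to\zp{}\to\zp{r+1}\to\zp{r}\to 0$ acts nontrivially and contributes the first new generator $\beta_r\iota_n$ in degree $n+1$. When $r=1$ one has $\beta_1=\beta_r$, and when $r=\infty$ (integral coefficients) $\beta_\infty=0$, so no Bockstein generator appears at the bottom. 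This is exactly the trichotomy recorded in the statement and explains why $\beta_r$ heads the list of low-excess generators.

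Third, with the structure theorem in hand the remaining task is purely combinatorial: enumerate the admissible monomials $I$ of excess $e(I)\le 5$ and compute the degree of each $\mathcal{P}_r^I\iota_n$. The excess is the bookkeeping device that records, for each fixed $n$, whether $\mathcal{P}_r^I\iota_n$ is a genuine algebra generator or instead collapses to a $p$-th power of a lower class, so organizing the output by excess displays the generator set uniformly in $n$. I would list the admissibles level by level, using the admissibility inequalities $s_i\ge p\,s_{i+1}+\epsilon_i$ to terminate each excess value, obtaining $\mathrm{id}$; then $\beta_r,\mathcal{P}^1\beta_r,\dots$; then $\mathcal{P}^1,\beta_1\mathcal{P}^1\beta_r,\dots$; and so on up to excess $5$. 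For $p=2$ the one extra point is to rewrite inadmissible products in admissible form and to record which ones survive through the Adem relation $\mathrm{Sq}^1\mathrm{Sq}^i=(i-1)\mathrm{Sq}^{i+1}$, equivalently $\beta_1\mathrm{Sq}^i=(i-1)\mathrm{Sq}^{i+1}$.

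The hard part will not be the structure theorem, which is standard, but the bookkeeping of the last step: correctly handling the interaction of the two distinct Bocksteins $\beta_1$ and $\beta_r$ (which coincide only when $r=1$), and certifying that the excess-$\le 5$ enumeration is both complete and free of redundancy after imposing admissibility and, at $p=2$, the Adem relations. Because this is routine yet error-prone, I would cross-check the low-degree entries directly against the explicitly known algebras $H^\ast(K(\zp{r},1);\zp{})$ and $H^\ast(K(\zp{r},2);\zp{})$ before asserting the general table.
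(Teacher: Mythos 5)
The paper gives no proof of this lemma; it is quoted directly from May's Theorem 10.3, which rests on exactly the Cartan--Serre computation you outline. Your sketch --- the structure theorem via the path--loop fibration and Borel's transgression theorem, the observation that $\beta_1\iota_n=0$ (for $r\geq 2$) while $\beta_r\iota_n$ is the first new generator, and the excess-$\leq 5$ enumeration of admissible monomials subject to admissibility and, for $p=2$, the Adem relation $\mathrm{Sq}^1\mathrm{Sq}^i=(i-1)\mathrm{Sq}^{i+1}$ --- is precisely the standard argument behind that citation, so your approach is essentially the same as the paper's.
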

 
 \begin{lemma}\label{lemToda}
    Let $p$ be an odd prime, $n\geq 1$. Let \[S^{2n+2p-2}\xra{i_p}M_{2n+2p-2}(\zp{})\xra{j_p}S^{2n+2p-1}\] be the cofibration with $i_p=i_p(2n+2p-2),j_p=j_p(2n+2p-2)$ the canonical inclusion and pinch maps, respectively.
    The nontrivial $p$-primary components of $\pi_{2n+i}(X)$ for $X=S^{2n+1},M_{2n+2p-2}(\Z/p)$ in the range $i<4p-3$ are listed as follows.
 \begin{center}
    \begin{tabular}{c|c|c|c}
       \hline
       $i$&$2p-2$&$4p-5$&$4p-4$\\
       \hline
      $S^{2n+1}$&$\zp{}$&$\zp{}$&$\zp{}$\\
       \hline
       \emph{generators}&$\alpha_1(2n+1)$&$\alpha_1(2n+1)\circ \alpha_1(2n+2p-2)$&$\alpha_2(2n+1)$\\
       \hline\hline
   $M_{2n+2p-2}(\Z/p)$&$\zp{}$&$\zp{}$&$\zp{}$\\
       \hline
       \emph{generators}&$i_p$&$i_p\alpha_1(2n+2p-2)$&$\tilde{\alpha}_1$
       \\ \hline
    \end{tabular}
 \end{center}
 where $\alpha_k(m)=\Sigma^{m-3}\alpha_k(3)$, $k=1, 2$ and
 \[\alpha_2(2n+1)\in \langle \alpha_1(2n+1),p\iota_{2n+2p-2},\alpha_1(2n+2p-2) \rangle,\]
 the Toda bracket (or called the secondary composition in \cite{TodaBook});
 $\tilde{\alpha}_1$ satisfies the relation
 \begin{equation}\label{eq:alpha1}
    j_p\tilde{\alpha}_1=\alpha_1(2n+2p-1).
 \end{equation}
 
    \begin{proof}
    For each $k$, denote by $\pi_k(X;p)$ the $p$-primary component of $\pi_k(X)$.
     The homotopy groups $\pi_{2n+i}(S^{2n+1};p)$ refer to \citep[Chapter XIII]{TodaBook}. Note that for any $n\geq 1$, the range $i\leq 4p-4$ is the stable range for the group $\pi_{2n+i}(M_{2n+2p-2}(\zp{}))$, which consequently can be computed by the following short exact sequence for $i\leq 4p-4$:
     \[
     \begin{tikzcd}
        \pi_{2n+i}(S^{2n+2p-2};p)\ar[r,tail, "(i_p)_\sharp"]&\pi_{2n+i}(M_{2n+2p-2}(\zp{}))\ar[r,two heads,"(j_p)_\sharp"]&\pi_{2n+i}(S^{2n+2p-1};p),
     \end{tikzcd}\]
     where $i_p=i_p(2n+2p-2),j_p=j_p(2n+2p-2)$ are the canonical inclusion and projection maps, respectively.
    \end{proof}
 \end{lemma}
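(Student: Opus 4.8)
The plan is to quote the sphere rows of the table from Toda's tables and to derive the Moore-space rows from the long exact homotopy sequence of the defining cofibration, which is legitimate because every degree in play is stable once $n\ge 1$. Writing $m=2n+2p-2$, I would first record from \citep[Chapter XIII]{TodaBook} the $p$-primary groups $\pi_{2n+i}(S^{2n+1})$ together with their generators: $\alpha_1(2n+1)$ in stem $2p-3$ (so $i=2p-2$), the composite $\alpha_1(2n+1)\circ\alpha_1(2n+2p-2)$ in stem $4p-6$ (so $i=4p-5$), and $\alpha_2(2n+1)$ in stem $4p-5$ (so $i=4p-4$), the last of which is represented by the secondary composition $\langle\alpha_1(2n+1),p\iota_m,\alpha_1(m)\rangle$; all other $p$-primary groups in the range vanish. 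These are statements about spheres and are imported rather than reproved.

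Next I would treat the Moore space. Since $M_{2n+2p-2}(\zp{})$ is the mapping cone of the degree-$p$ map $S^m\to S^m$, its Puppe sequence reads
\[ S^m\xrightarrow{\cdot p}S^m\xrightarrow{i_p}M_{2n+2p-2}(\zp{})\xrightarrow{j_p}S^{m+1}\xrightarrow{\cdot p}S^{m+1}\to\cdots. \]
The space $M_{2n+2p-2}(\zp{})$ is $(m-1)$-connected, so its homotopy groups are stable through degree $2m-2=4n+4p-6$; the required bound $2n+i\le 2n+4p-4\le 4n+4p-6$ is equivalent to $n\ge 1$, which is exactly the hypothesis. Hence for $i\le 4p-4$ I may compute in the stable range, where the cofibration induces (by Blakers--Massey, or by passage to the stable category) the long exact sequence, localized at $p$,
\[ \cdots\to\pi_k(S^m)_{(p)}\xrightarrow{\cdot p}\pi_k(S^m)_{(p)}\xrightarrow{(i_p)_\sharp}\pi_k(M_{2n+2p-2}(\zp{}))\xrightarrow{(j_p)_\sharp}\pi_k(S^{m+1})_{(p)}\xrightarrow{\cdot p}\cdots, \]
with $k=2n+i$, in which post-composition with the degree-$p$ self-map is multiplication by $p$.

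Then I would read off the three nonzero groups. On every positive $p$-primary stem (a copy of $\zp{}$) multiplication by $p$ vanishes, so the long exact sequence collapses to the short exact sequence in the statement; at the bottom class, where $\pi_m(S^m)_{(p)}=\zz{p}$, multiplication by $p$ has cokernel $\zp{}$. Concretely: at $k=m$ one obtains $\pi_{m}(M_{2n+2p-2}(\zp{}))=\zp{}\langle i_p\rangle$; at $k=m+2p-3=2n+4p-5$ the map $(i_p)_\sharp$ carries the first $p$-stem generator $\alpha_1(m)$ isomorphically onto $\zp{}\langle i_p\alpha_1(2n+2p-2)\rangle$; and at $k=m+2p-2=2n+4p-4$ the map $(j_p)_\sharp$ is an isomorphism onto $\zp{}\langle\alpha_1(2n+2p-1)\rangle$, so any generator $\tilde\alpha_1$ of its source satisfies $j_p\tilde\alpha_1=\alpha_1(2n+2p-1)$, which is (\ref{eq:alpha1}). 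The intermediate groups vanish because the two flanking sphere stems are then smaller than $2p-3$, the first nonzero $p$-stem.

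The main obstacle I anticipate is the stable-range bookkeeping: one must verify that the single hypothesis $n\ge 1$ already places every relevant degree inside the metastable range, so that the cofibration yields an exact sequence of homotopy groups and the degree-$p$ self-map acts as multiplication by $p$. Once this is secured, the generator identifications are forced by exactness, and the Toda-bracket description of $\alpha_2(2n+1)$ is simply quoted from Toda's tables.
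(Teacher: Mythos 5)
Your proposal is correct and follows essentially the same route as the paper: quote Toda's tables for the sphere entries and compute the Moore-space entries from the cofibration-induced exact sequence in the stable range, where the degree-$p$ self-map acts as multiplication by $p$. You are in fact slightly more careful than the paper's own proof at the bottom cell, where $\pi_{m}(S^{m})_{(p)}=\zz{p}$ forces one to take the cokernel of multiplication by $p$ rather than a literal $p$-primary component.
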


 \section{Cohomology operations and  modular cohomotopy}\label{sec: oldtools}
 
 In this section let $p$ be a prime and let $r$ be a positive integer; let $G=\zp{r}$ or $\zz{p}$ and let $F_n(G)$ be defined by the $p$-localized \emph{H-fibration sequence} (homotopy fibration sequence of H-spaces and H-maps):
 \begin{equation}\label{fibG}
     F_n(G)\to K_n(G)\xra{\mathcal{P}^1_G}K_{n+2p-2}(\zp{}),
  \end{equation}
 where $\mathcal{P}^1_G$ is the cohomology operation map defined by (\ref{StG}).
 
 \subsection{Certain group extension}\label{sec:EXTG}
 We shall give a unified expression of certain extension of abelian groups induced by  (\ref{fibG}).
 \begin{lemma}\label{lemconn}
   If $n\geq 2p-1$, then there exists a canonical map $
   \alpha_{G}\colon M_n(G)\to F_n(G)$, which is $(n+2p-2)$-connected.
 
  \begin{proof}
    Consider the defined homotopy fibration (\ref{fibG}). There exists a canonical map $\alpha_G\colon M_n(G)\to F_n(G)$ which is at least $n$-connected.
    For simplicity, we write \[H^{n+i}(\zp{r},n;\zp{})=H^{n+i}\big(K_n(\zp{r});\zp{}\big).\]
 
 (1) $G=\zp{r}$. If $n\geq 2p-1$, by Lemma \ref{lemchEM}, the nontrivial cohomology groups $H^{n+i}\big(K_n(\zp{r});\zp{}\big)$ for $i\leq 2p-1$ are listed as follows.
   \begin{center}
 \begin{tabular}{c|c|c|c|c}
 \hline
 $i$&$0$&$1$&$2p-2$&$2p-1$\\
 \hline
 $H^{n+i}(\zp{r},n;\zp{})$&$\iota_n$&$\beta_r\iota_n$&$\mathcal{P}^1\iota_n$&$\mathcal{P}^1\beta_r\iota_n,\beta_1\mathcal{P}^1\iota_n$\\
 \hline
 \end{tabular}
 \end{center}
   where each nonzero generator $x$ represents a direct summand $\zp{}$. Note that when $n=2p-1$, $\iota_n^2=0$ if $p\geq 3$; otherwise $\beta_1\mathcal{P}^1\iota_3=\mathrm{Sq}^3\iota_3=\iota_3^2$.
  By Lemma \ref{lemserre}, the homotopy fibration (\ref{fibG}) induces exact sequences of cohomology groups with coefficients in $\zp{}$:
  \begin{multline*}
     0\to H^{n+i}(\zp{r},n)\xra{}H^{n+i}(F_n(\zp{r}))\to 0, i=0,1,\cdots, 2p-3;\\
 0\to H^{n+2p-2}(\zp{},n+2p-2)\xra{(\mathcal{P}^1)^\ast} H^{n+2p-2}(\zp{r},n)\xra{}H^{n+2p-2}(F_n(\zp{r}))\\
 \to H^{n+2p-1}(\zp{},n+2p-2)\xra{(\mathcal{P}^1)^\ast} H^{n+2p-1}(\zp{r},n)\xra{}H^{n+2p-1}(F_n(\zp{r}))
  \end{multline*}
 Hence $H^{n+i}(F_n(\zp{r});\zp{})$ is isomorphic to $\zp{}$ for $i=0,1$ and $0$ for otherwise $i\leq n+2p-3$. Since $\iota_{n+2p-2}\circ \mathcal{P}^1=\mathcal{P}^1\iota_n$, both representing $\mathcal{P}^1_{\zp{r}}$,  the first $(\mathcal{P}^1)^\ast$ is an isomorphism. The second $(\mathcal{P}^1)^\ast$ is a monomorphism, whose image is $\zp{}\langle \beta_r\mathcal{P}^1\iota_n \rangle$ if $r<\infty$ and otherwise $0$. Thus we get
 \[H^{n+2p-2}(F_n(\zp{r});\zp{})=0,\quad H^{n+2p-1}(F_n(\zp{r});\zp{})\neq 0.\]
 By the universal coefficient theorem we see that the second nontrivial integral reduced homology group of $F_n(\zp{r})$ occurs in dimension $n+2p-1$. Thus the map $\alpha_{\zp{r}}$ is $(n+2p-2)$-connected, by the Whitehead's second theorem  (cf.~\citep[Theorem 6.4.15]{Arkowitzbook}). The proof in the case $G=\zp{r}$ is done.
 
 (2) It is clear that $H^\ast(K_n(\zz{p});\zp{})\cong H^\ast(K_n(\Z);\zp{})$, the proof of the case $G=\zz{p}$ is totally similar and omitted here.
  \end{proof}
 \end{lemma}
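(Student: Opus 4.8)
The plan is to first produce the map $\alpha_G$ by a lifting argument, and then pin down its connectivity by computing $H^\ast(F_n(G);\zp{})$ in low degrees through the Serre exact sequence and comparing with $M_n(G)$.

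First I would construct $\alpha_G$ as a lift of the canonical inclusion $i_M\colon M_n(G)\to K_n(G)$ through the fibration (\ref{fibG}). By the defining property of the homotopy fibre, such a lift exists as soon as the composite $\mathcal{P}^1_G\circ i_M$ is nullhomotopic. Since maps into the Eilenberg-MacLane space $K_{n+2p-2}(\zp{})$ are classified by $H^{n+2p-2}(-;\zp{})$, this composite is represented by $\mathcal{P}^1_G$ applied to the fundamental class, a class in $H^{n+2p-2}(M_n(G);\zp{})$. But $\tilde H^\ast(M_n(\zp{r});\zp{})$ is concentrated in degrees $n$ and $n+1$, while $\tilde H^\ast(M_n(\zz{p});\zp{})$ is concentrated in degree $n$; as $2p-2\geq 2$, the group $H^{n+2p-2}(M_n(G);\zp{})$ vanishes, so the composite is null and $i_M$ lifts to $\alpha_G\colon M_n(G)\to F_n(G)$. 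Both spaces are $(n-1)$-connected with $\pi_n\cong G$, and since $i_M$ and the fibre inclusion $F_n(G)\to K_n(G)$ both induce isomorphisms on $\pi_n$, the lift $\alpha_G$ is an isomorphism on $\pi_n$, hence at least $n$-connected.

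To sharpen this to $(n+2p-2)$-connectivity, the plan is to read off $H^\ast(F_n(G);\zp{})$ from the Serre exact sequence of Lemma \ref{lemserre} applied to (\ref{fibG}); its range $N=(n-1)+(n+2p-3)+1=2n+2p-3$ comfortably exceeds $n+2p-1$. The input is Lemma \ref{lemchEM}: through degree $n+2p-1$ the algebra $H^\ast(K_n(\zp{r});\zp{})$ (respectively $H^\ast(K_n(\zz{p});\zp{})\cong H^\ast(K_n(\Z);\zp{})$) is, in this range, spanned as a graded vector space by $\iota_n$, by $\beta_r\iota_n$ (absent for $\zz{p}$, since $\beta_\infty=0$), by $\mathcal{P}^1\iota_n$ in degree $n+2p-2$, and by $\mathcal{P}^1\beta_r\iota_n,\ \beta_1\mathcal{P}^1\iota_n$ in degree $n+2p-1$. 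The relation $\iota_{n+2p-2}\circ\mathcal{P}^1=\mathcal{P}^1\iota_n$ identifies the edge map $\pi^\ast=(\mathcal{P}^1_G)^\ast$: it sends $\iota_{n+2p-2}\mapsto\mathcal{P}^1\iota_n$, an isomorphism in degree $n+2p-2$, and by naturality of the Bockstein it sends $\beta_1\iota_{n+2p-2}\mapsto\beta_1\mathcal{P}^1\iota_n$, a monomorphism in degree $n+2p-1$. Exactness then forces $H^{n+2p-2}(F_n(G);\zp{})=0$ and $H^{n+2p-1}(F_n(G);\zp{})\neq0$, while in each degree below $n+2p-2$ the map $\pi^\ast$ vanishes, so the fibre restriction $i^\ast\colon H^\ast(K_n(G);\zp{})\to H^\ast(F_n(G);\zp{})$ is an isomorphism there. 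Factoring $i_M^\ast=\alpha_G^\ast\circ i^\ast$ then shows that $\alpha_G^\ast$ is an isomorphism on mod $p$ cohomology through degree $n+2p-2$ and fails to be one at $n+2p-1$.

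Since the fibration, and hence $F_n(G)$, is already $p$-local, the universal coefficient theorem upgrades this to the statement that $\alpha_G$ induces an isomorphism on $\zz{p}$-homology through degree $n+2p-2$, the first new homology of $F_n(G)$ appearing in degree $n+2p-1$; Whitehead's second theorem (cf.~\citep[Theorem 6.4.15]{Arkowitzbook}) then yields that $\alpha_G$ is $(n+2p-2)$-connected. The main obstacle I anticipate is ruling out stray generators or products in the intermediate degrees $n+2\leq\ast\leq n+2p-2$, and this is exactly where the hypothesis $n\geq 2p-1$ enters: it pushes the square $\iota_n^2$ (in degree $2n$) out to degree $\geq n+2p-1$. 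In the boundary case $n=2p-1$ the square lands precisely in the top degree $n+2p-1$; for odd $p$ it vanishes by graded-commutativity of the odd-degree class $\iota_n$, whereas for $p=2$ one must track the relation $\mathrm{Sq}^3\iota_3=\iota_3^2$ together with the extra $\mathrm{Sq}^1$-type classes and run the same bookkeeping. The $\zz{p}$ case is otherwise identical to the $\zp{r}$ case, the only change being the absence of $\beta_r\iota_n$.
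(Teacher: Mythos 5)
Your proposal is correct and follows essentially the same route as the paper: compute $H^\ast(F_n(G);\zp{})$ through degree $n+2p-1$ via the Serre exact sequence of (\ref{fibG}) and Lemma \ref{lemchEM}, observe that $n\geq 2p-1$ pushes $\iota_n^2$ out of the relevant range, and conclude with the universal coefficient theorem and Whitehead's second theorem. The only additions are your explicit obstruction-theoretic construction of the lift $\alpha_G$ (which the paper merely asserts) and your identification of the second $(\mathcal{P}^1)^\ast$ as sending $\beta_1\iota_{n+2p-2}\mapsto\beta_1\mathcal{P}^1\iota_n$ by Bockstein naturality, which is in fact the cleaner way to see that this map is injective in both the $\zp{r}$ and $\zz{p}$ cases.
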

 
 \begin{proposition}\label{propEXT}
 Let $X$ be a complex with $\dim(X)\leq n+2p-3$, $n\geq 2p-1$.  Then there is a short exact sequence of abelian groups
 \begin{equation}\label{EXT}
    0\to T_{\Omega \mathcal{P}^1_G}(X)\to \pi^n(X;G)\xra{h^n} H^n(X;G)\to 0,
 \end{equation}
 where $T_{\Omega \mathcal{P}^1_G}(X)=H^{n+2p-3}(X;\zp{})/\mathcal{P}^1_G\big(H^{n-1}(X;G)\big)$. The extension is natural with respect to maps $X\to Y$ and the reduction homomorphisms $\rho^s_r\colon\zp{s}\to \zp{r}$ for $1\leq r\leq s<\infty$; $\rho^r_r=id$.
  \begin{proof}
 Consider the fibration (\ref{fibG}).
 By Lemma \ref{lemconn} and  \citep[Proposition 2.4.6]{Arkowitzbook},  the canonical map $\alpha_G\colon M_n(G)\to F_n(G)$ induces a bijection
 \[[X,M_n(G)]\xra{\cong}[X,F_n(G)]\]
 if $\dim(X)\leq n+2p-3$. Note that $F_n(G)=\Omega F_{n+1}(G)$ is a loop space and hence the bijection above is an isomorphism of abelian groups, by Lemma \ref{lemJamesUP}.
 The proposition then follows by the induced exact sequence:
 \begin{multline*}
    H^{n-1}(X;G)\xra{\mathcal{P}^1_G}H^{n+2p-3}(X;\zp{})\to \pi^n(X;G)\xra{h^n}H^n(X;G)\to 0.
 \end{multline*}
 
 The naturality of the extension is clear.
  \end{proof}
 \end{proposition}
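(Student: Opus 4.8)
The plan is to trade the Moore space $M_n(G)$ for the fibre $F_n(G)$ of $\mathcal{P}^1_G$ in the fibration (\ref{fibG}), and then read the desired sequence off the long exact sequence of that fibration. First I would invoke Lemma \ref{lemconn}: under the hypothesis $n\geq 2p-1$ it supplies the canonical $(n+2p-2)$-connected map $\alpha_G\colon M_n(G)\to F_n(G)$, whose homotopy fibre is consequently $(n+2p-3)$-connected. For any complex $X$ with $\dim(X)\leq n+2p-3$, the obstructions to lifting and to homotopy of lifts live in groups $H^{i+1}(X;\pi_i)$ and $H^i(X;\pi_i)$ that all vanish in this range, so
\[(\alpha_G)_\sharp\colon [X,M_n(G)]\to [X,F_n(G)]\]
is a bijection; this is precisely \citep[Proposition~2.4.6]{Arkowitzbook}.

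Next I would promote this bijection to an isomorphism of abelian groups. The key point is that $F_n(G)=\Omega F_{n+1}(G)$ is a loop space, the fibration (\ref{fibG}) being assembled from H-maps of \EM spaces, so $[X,F_n(G)]$ carries a natural abelian group structure. Checking that the hypotheses of Lemma \ref{lemJamesUP} hold for $\alpha_G$ (its source is $(n-1)$-connected, its target is a homotopy-associative H-space, it is $(n+2p-2)$-connected, and $\min\{(n+2p-2)-1,\,2(n-1)\}=n+2p-3$ exactly because $n\geq 2p-1$) then identifies $(\alpha_G)_\sharp$ as a group isomorphism $\pi^n(X;G)\cong [X,F_n(G)]$.

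The exact sequence now falls out of the fibration. Applying $[X,-]$ to $F_n(G)\to K_n(G)\xra{\mathcal{P}^1_G}K_{n+2p-2}(\zp{})$ and using the identifications $[X,K_m(A)]\cong H^m(X;A)$ and $[X,\Omega K_m(A)]\cong H^{m-1}(X;A)$, I obtain
\[H^{n-1}(X;G)\xra{\mathcal{P}^1_G}H^{n+2p-3}(X;\zp{})\to \pi^n(X;G)\xra{h^n}H^n(X;G)\xra{\mathcal{P}^1_G}H^{n+2p-2}(X;\zp{}),\]
in which the left-hand connecting map is the loop of $\mathcal{P}^1_G$, again realized as $\mathcal{P}^1_G$ after the degree shift (whence the notation $T_{\Omega\mathcal{P}^1_G}$). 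The bound $\dim(X)\leq n+2p-3$ forces $H^{n+2p-2}(X;\zp{})=0$, so the last arrow vanishes and $h^n$ is onto, while the kernel of $h^n$ is the cokernel of the first $\mathcal{P}^1_G$, namely $T_{\Omega\mathcal{P}^1_G}(X)=H^{n+2p-3}(X;\zp{})/\mathcal{P}^1_G\big(H^{n-1}(X;G)\big)$. This is (\ref{EXT}).

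The step I expect to demand the most care is not any single connectivity estimate but the identification of the arrow labelled $h^n$: one must confirm that under $(\alpha_G)_\sharp$ the map $[X,F_n(G)]\to H^n(X;G)$ induced by the fibre inclusion coincides with the cohomotopy Hurewicz homomorphism, equivalently that the composite $M_n(G)\xra{\alpha_G}F_n(G)\to K_n(G)$ is homotopic to the canonical inclusion $i_M$ used to define $h^n$. This compatibility is implicit in the construction of $\alpha_G$ but deserves an explicit sentence. Finally, the asserted naturality of (\ref{EXT}) in $X$ and in the reductions $\rho^s_r$ is formal, since the fibration (\ref{fibG}), the map $\alpha_G$, and all the representing maps are natural in both variables.
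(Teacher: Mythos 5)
Your proposal follows the paper's own argument essentially verbatim: Lemma \ref{lemconn} plus \citep[Proposition 2.4.6]{Arkowitzbook} give the bijection $[X,M_n(G)]\to[X,F_n(G)]$, the loop-space structure of $F_n(G)$ upgrades it to a group isomorphism via Lemma \ref{lemJamesUP}, and the sequence is read off the fibration (\ref{fibG}). Your added checks (the $\min\{n+2p-3,2n-2\}$ computation, the vanishing of $H^{n+2p-2}(X;\zp{})$, and the compatibility of the fibre inclusion with $i_M$) are correct and only make explicit what the paper leaves implicit.
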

 
 \subsection{Conditions for the splitness of the extension}\label{sec:EXTsplit}
 By \citep[Lemma on page 63]{Hilton65}, the extension (\ref{EXT}) is determined by the multiplication by $p$ on $H^n(X;G)$.
 
 \begin{proposition}\label{propEXT:local}
 If $G=\zz{p},p\geq 2$, then the extension (\ref{EXT}) splits if and only if
  \[\mathcal{P}^1_{G}\big(H^{n-1}(X;G)\big)=\mathcal{P}^1\big(H^{n-1}(X;\zp{})\big)\subseteq H^{n+2p-3}(X;\zp{}).\]
  \begin{proof}
 The proof is totally similar to that of  \citep[Theorem 6.2]{Taylor2012} and we omit the details here.
  \end{proof}
 \end{proposition}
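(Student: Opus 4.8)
The plan is to imitate Taylor's argument \citep[Theorem 6.2]{Taylor2012}, which rests on the Larmore--Thomas analysis \cite{LT72} of the group extension attached to a principal fibration and is organised around the reduction of the splitting problem to multiplication by $p$. First I would record the $p$-local features of (\ref{EXT}). Since $M_n(G)$, $K_n(G)$ and $K_{n+2p-2}(\zp{})$ are all $p$-local for $G=\zz{p}$, the sequence (\ref{EXT}) is an extension of $\zz{p}$-modules, and $T_{\Omega \mathcal{P}^1_G}(X)$, being a quotient of $H^{n+2p-3}(X;\zp{})$, is an $\zp{}$-vector space, so $p\cdot T_{\Omega \mathcal{P}^1_G}(X)=0$. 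Because $\zz{p}$ is a principal ideal domain, I would decompose $H^n(X;\zz{p})\cong\zz{p}^a\oplus\bigoplus_i\zp{s_i}$; the free summand lifts freely and splits off, so the obstruction is concentrated on the torsion summands. Invoking the Lemma on page~63 of \cite{Hilton65}, the extension class is then detected entirely by multiplication by $p$: for a generator $x$ of a $\zp{s}$-summand, with a lift $\tilde{x}\in\pi^n(X;\zz{p})$, the element $p^{s}\tilde{x}$ lies in $T_{\Omega \mathcal{P}^1_G}(X)$ and is independent of the chosen lift (as $p\cdot T_{\Omega \mathcal{P}^1_G}(X)=0$), and (\ref{EXT}) splits if and only if $p^{s}\tilde{x}=0$ for every such generator.

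Next I would compute $p^{s}\tilde{x}$ through the fibration (\ref{fibG}). Looping $\mathcal{P}^1_G$ once exhibits $F_n(G)\to K_n(G)$ as the principal $K_{n+2p-3}(\zp{})$-fibration classified by $\mathcal{P}^1_G\in H^{n+2p-2}(K_n(G);\zp{})$, the connecting map realising the inclusion $T_{\Omega \mathcal{P}^1_G}(X)\hookrightarrow\pi^n(X;\zz{p})$. Using the Larmore--Thomas cocycle \cite{LT72}, the failure of $p^{s}\tilde{x}$ to vanish is computed by a functional operation built from $\mathcal{P}^1_G$ and the degree-$p^{s}$ self-map of $M_n(\zz{p})$; writing $\rho\colon H^{n-1}(X;\zz{p})\to H^{n-1}(X;\zp{})$ for reduction mod $p$, the outcome is that the class $p^{s}\tilde{x}\in T_{\Omega \mathcal{P}^1_G}(X)$ is the image of $\mathcal{P}^1(\xi)\in H^{n+2p-3}(X;\zp{})$ for a class $\xi\in H^{n-1}(X;\zp{})$ obtained from $x$ by the relevant Bockstein. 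Running over all torsion generators, the Bockstein exact sequence identifies the available $\xi$ modulo $\mathrm{im}\,\rho$ with the torsion of $H^n(X;\zz{p})$, so the collected obstructions assemble into a single homomorphism
\[\overline{\mathcal{P}^1}\colon H^{n-1}(X;\zp{})/\mathrm{im}\,\rho\longrightarrow T_{\Omega \mathcal{P}^1_G}(X),\qquad \xi\longmapsto[\mathcal{P}^1(\xi)],\]
well defined precisely because $\mathcal{P}^1(\mathrm{im}\,\rho)=\mathcal{P}^1_G\big(H^{n-1}(X;\zz{p})\big)$ is killed in $T_{\Omega \mathcal{P}^1_G}(X)$.

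Finally I would read off the criterion: (\ref{EXT}) splits if and only if $\overline{\mathcal{P}^1}=0$, i.e.\ $\mathcal{P}^1\big(H^{n-1}(X;\zp{})\big)\subseteq\mathcal{P}^1(\mathrm{im}\,\rho)$. Since the reverse inclusion is automatic and $\mathcal{P}^1(\mathrm{im}\,\rho)=\mathcal{P}^1_G\big(H^{n-1}(X;\zz{p})\big)$, this is exactly the asserted equality $\mathcal{P}^1_G\big(H^{n-1}(X;\zz{p})\big)=\mathcal{P}^1\big(H^{n-1}(X;\zp{})\big)$. The two-sided implication is available here because $\zz{p}$ is a PID, which is also why the analogous scheme for $G=\zp{r}$ returns only a sufficient condition. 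The step I expect to be the real obstacle is the middle one: pinning down the Larmore--Thomas functional operation as literally $\mathcal{P}^1$ evaluated on the Bockstein defect, with matching indeterminacy. This needs the computation of $H^\ast(F_n(G))$ through degree $n+2p-2$ (Lemma \ref{lemconn}) to guarantee that in the range $n\geq 2p-1$, $\dim X\leq n+2p-3$ no higher operation or cup product interferes, together with careful bookkeeping of the Bocksteins $\beta_r$ of Lemma \ref{lemchEM} to handle the summands $\zp{s}$ with $s\geq 2$. Once this identification is secured the equivalence is formal, which is what licenses the assertion that the proof runs parallel to \citep[Theorem 6.2]{Taylor2012}.
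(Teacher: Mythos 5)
Your proposal is correct and takes essentially the same route as the paper: the paper's proof is simply an appeal to Taylor's Theorem 6.2, i.e.\ the Larmore--Thomas reduction of the extension class to multiplication by $p$ together with the Hilton lemma, which is exactly the argument you reconstruct, and the key step you flag as the real obstacle --- realizing $p\tilde{x}$ as $\partial_\sharp\mathcal{P}^1(\xi)$ with $\delta(\xi)=x$ via a map $\psi\colon K_n(\zz{p})\to F_n(\zz{p})$ satisfying $\imath\psi=p_K$ and $\psi\imath=p_F$ --- is precisely what the paper carries out in detail for the analogous $\zp{r}$ cases in Propositions \ref{propEXTsplit:2} and \ref{propEXTsplit:p}. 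There are no gaps beyond the level of detail the paper itself omits.
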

 
 For a prime $p$, let $\pi^n(X;p)$ denote the $p$-primary component of the cohomotopy group $\pi^n(X)$ and by convention, $\pi^n(X;\infty)=$the integral direct summand of $\pi^n(X)$.
 \begin{example}\label{EX:FP-p}
 Let $\mathbb{F}=\mathbb{C},\mathbb{H}$ be the fields of the complex numbers and quaternions, respectively.  Let $p\geq 2$ be a prime and let $\alpha\in H^d(\FP^n;\zp{})$ be a generator, where $d=\dim_\R\mathbb{F}$. Recall that
  \begin{itemize}
     \item[(a)] $H^\ast(\FP^n;\zp{})\cong \Z[\alpha]/(\alpha^{n+1})$.
     \item[(b)] $\mathcal{P}^1(\alpha^m)=\frac{dm}{2}\cdot\alpha^{m+\frac{2p-2}{d}}$ for each $m\geq 1$ (cf.~\citep[4L]{Hatcher}).
  \end{itemize}
 By Proposition \ref{propEXT:local} we compute that
       \begin{enumerate}[(1)]
          \item For  $p\leq \frac{dn}{4}+1$,
          \[\pi^{dn-2p+3}(\FP^{n};p)=\pi^{dn-2p+3}(\FP^{n};\Z_{(p)})=\left\{\begin{array}{ll}
             \Z/p&\text{ if }n\equiv \frac{2p-2}{d}\mod p;\\
             0&\text{otherwise.}
          \end{array}\right.\]
 
          \item  For $p\leq \frac{dn+k}{4}+1,k\geq 1$,
          \[\pi^{dn-2p+k+3}(\mathbb{F}P^{n};\zz{p})\cong H^{dn-2p+k+3}(\mathbb{F}P^{n};\zz{p}),\]
          it follows that
          \[\pi^{dn-2p+k+3}(\mathbb{F}P^{n};\infty)=\left\{\begin{array}{ll}
             \Z& \text{ if $k\leq 2p-3$ is odd};\\
             0&\text{ if $k\leq 2p-3$ is even}.
          \end{array}\right.\]
 
       \end{enumerate}
 
    \end{example}
 
 Taking $p=3$ and comparing with \citep[Theorem 14.2]{West71}, we have
 \begin{corollary}
   $\pi^{4n-3}(\HP^n)\cong \Z/12~\text{or }\Z/24$
    when $n\equiv 1\mod 6$.
 \end{corollary}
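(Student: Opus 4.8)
The plan is to read off the $3$-primary part of $\pi^{4n-3}(\HP^n)$ from Example \ref{EX:FP-p} and to import the $2$-primary part from West, then assemble the two. Throughout I specialize the Example to $\mathbb{F}=\mathbb{H}$, so that $d=\dim_{\R}\mathbb{H}=4$, and to the prime $p=3$, for which $dn-2p+3=4n-3$ and $\tfrac{2p-2}{d}=1$. The hypothesis $p\leq \tfrac{dn}{4}+1$ of Example \ref{EX:FP-p}(1) then reads $3\leq n+1$, i.e.\ $n\geq 2$; the relevant values $n\equiv 1\pmod 6$ with $n\geq 2$ begin at $n=7$, so it always holds (the excluded case $n=1$ is $\HP^1=S^4$, with $\pi^1(S^4)=0$). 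Since $n\equiv 1\pmod 6$ forces $n\equiv 1\pmod 3$ and $\tfrac{2p-2}{d}=1$, Example \ref{EX:FP-p}(1) gives
\[
\pi^{4n-3}(\HP^n;3)=\pi^{4n-3}(\HP^n;\zz{3})\cong \Z/3.
\]
In particular the $3$-localization of $\pi^{4n-3}(\HP^n)$ is the finite group $\Z/3$, so $\pi^{4n-3}(\HP^n)$ has no free summand and has $3$-primary part exactly $\Z/3$.

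Next I would fix the global shape of the group. For $n\geq 2$ one has $\dim\HP^n=4n\leq 2(4n-3)-2$, so $\pi^{4n-3}(\HP^n)$ lies in the stable range and equals the group $\{\HP^n,S^{4n-3}\}$ of stable homotopy classes of maps. The cofibration $\HP^{n-1}\to \HP^n\to S^{4n}$ induces an exact sequence
\[
\{\Sigma\HP^{n-1},S^{4n-3}\}\to \{S^{4n},S^{4n-3}\}\to \pi^{4n-3}(\HP^n)\to \{\HP^{n-1},S^{4n-3}\}.
\]
Every cell of $\HP^{n-1}$ has dimension $\leq 4n-4<4n-3$, so $\{\HP^{n-1},S^{4n-3}\}=0$, whence $\pi^{4n-3}(\HP^n)$ is a quotient of $\{S^{4n},S^{4n-3}\}\cong \pi_3^{s}\cong \Z/24$. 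In particular it is cyclic, has no $q$-torsion for primes $q\geq 5$, and has $2$-primary part dividing $\Z/8$. West's computation in \citep[Theorem 14.2]{West71} identifies this $2$-primary part, for $n\equiv 1\pmod 6$, as $\Z/4$ or $\Z/8$. Since we already know the group is cyclic, with $3$-primary part exactly $\Z/3$, no free part, and no higher torsion, its order is $12$ or $24$, and therefore
\[
\pi^{4n-3}(\HP^n)\cong \Z/12\ \text{or}\ \Z/24.
\]

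The genuinely new input is the $3$-primary part, which Example \ref{EX:FP-p} delivers from the single relation $\mathcal{P}^1(\alpha^{n-1})=2(n-1)\alpha^{n}$ together with Proposition \ref{propEXT:local}: the operation $\mathcal{P}^1\colon H^{4n-4}(\HP^n;\zz{3})\to H^{4n}(\HP^n;\Z/3)$ is multiplication by $2(n-1)$, which vanishes mod $3$ exactly when $n\equiv 1\pmod 3$, so its cokernel is all of $\Z/3$. The real obstacle, and the reason the answer stays a dichotomy, is the $2$-primary part: separating $\Z/4$ from $\Z/8$ is a $\mathrm{Sq}^4$-level (equivalently, top-cell attaching-map) computation that the mod $p$ machinery of this section cannot detect, and which I would defer entirely to West's theorem. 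The only bookkeeping to watch is translating West's indexing conventions into ours, so that his $2$-primary answer is attached to $\pi^{4n-3}(\HP^n)$ and not to a neighbouring group.
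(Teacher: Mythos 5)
Your proposal is correct and follows essentially the same route as the paper: the $3$-primary part is read off from Example \ref{EX:FP-p} with $p=3$ (where $n\equiv 1 \bmod 6$ forces $n\equiv \tfrac{2p-2}{d}=1 \bmod 3$, giving $\Z/3$), and the $2$-primary part is imported from \citep[Theorem 14.2]{West71}, exactly as the one-line argument preceding the corollary does. Your additional stable-range cofibration argument, showing $\pi^{4n-3}(\HP^n)$ is a cyclic quotient of $\pi_3^S\cong\Z/24$, is a welcome piece of glue that the paper leaves implicit, as is your note on the degenerate case $n=1$.
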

 
 Now let's focus on the case $G=\zp{r}$ for some prime $p$.
 If $r\geq 2$, let
 \[K_n(\zp{r})\xra{\rho}  K_n(\zp{r-1})\xra{[p]} K_n(\zp{r})\]
 be the maps corresponding to the reduction $\zp{r}\to \zp{r-1}$ mod $p^{r-1}$ and the multiplication $\cdot p\colon \zp{r-1}\to\zp{r}$, respectively.
 Let $p_K$ be the $p$-{th} power map on $K_n(\zp{r})$, then there holds a homotopy commutative square
 \begin{equation}\label{diag:pK}
    \begin{tikzcd}
    K_n(\zp{r})\ar[r,"\rho"]\ar[d,"{[p]}"]\ar[dr,"p_K"description]&K_n(\zp{r-1})\ar[d,"{[p]}"]\\
    K_n(\zp{r+1})\ar[r,"\rho"]&K_n(\zp{r})\end{tikzcd}
 \end{equation}

 The following two propositions respectively give the conditions for the extension (\ref{EXT})  to split with $G=\zp{r}$, $p\geq 2$ and $r\geq 1$.
 \begin{proposition}\label{propEXTsplit:2}
 Suppose the assumptions in Proposition \ref{propEXT} hold and $G=\z{r}$.
 \begin{enumerate}
        \item If $r\geq 2$, the extension (\ref{EXT}) splits if
        \begin{enumerate}
           \item the homomorphism $[2]\colon H^{n}(X;\z{r-1})\to H^n(X;\z{r})$ is injective and
           \item $\mathrm{Sq}^1\big(H^{n}(X;\z{})\big)\subseteq\sq{\z{r}}\big(H^{n-1}(X;\z{r})\big).$
        \end{enumerate}
     Moreover, if the condition $(a)$ is true, then the extension (\ref{EXT}) splits if and only if $(b)$ holds.
    \item If $r=1$, the extension (\ref{EXT}) splits if
     $\mathrm{Sq}^1\colon H^n(X;\z{})\to H^{n+1}(X;\z{})$ is trivial.
 
 \end{enumerate}
 
 \begin{proof}
 $(1)$
 Consider the following homotopy fibration diagram
 \begin{equation}\label{diagpsi}
 \begin{tikzcd}[column sep=small]
 K_n(\z{})\ar[r,"{[2^{r-1}]}"]\ar[d,"\mathrm{Sq}^1"]&K_n(\z{r})\ar[r,"\rho"]\ar[d,dashed,"\psi"]\ar[dr,dotted,"2_K"description]&K_n(\z{r-1})\ar[r,"\delta"]\ar[d,"{[2]}"]\ar[dr,dotted,"0"description]&K_{n+1}(\z{})\ar[d,"\mathrm{Sq}^1"]\\
 K_{n+1}(\z{})\ar[r,"\partial"]&F_n(\z{r})\ar[r,"\imath"]&K_n(\z{r})\ar[r,"\sq{\z{r}}"]&K_{n+2}(\z{})
       \end{tikzcd}
 \end{equation}
 where $\delta=\mathrm{Sq}^1_{\z{r-1}}$.
 We \textbf{claim} that after choosing suitably, the map $\psi$ satisfies  the relations
    \[ \imath\psi=2_K,\quad\psi\imath=2_F,\]
    where $2_F$ is the $2$-th power map on $F_n(\z{r})$.
 As the dashed arrow indicated, the first equality trivially holds by the commutativity of the middle square.
 The map $\imath\colon F_n(\z{r})\to K_n(\z{r})$ induces an  commutative exact sequence diagram
 \[\begin{tikzcd}[column sep=small]
     H^{n+1}(K_n(\z{r});\z{})\ar[r,tail,"\partial_\sharp"]\ar[d,"\imath^\ast","\cong"swap]&{[K_n(\z{r}),F_n(\z{r})]}\ar[r,"\iota_\sharp"]\ar[d,"\imath^\sharp"]& {[K_n(\z{r}),K_n(\z{r})]}\ar[d,"\imath^\sharp"]\\
     H^{n+1}(F_n(\z{r});\z{})\ar[r,tail,"\partial_\sharp"]&{[F_n(\z{r}),F_n(\z{r})]}\ar[r,"\iota_\sharp"]&{[F_n(\z{r}),K_n(\z{r})]}
 \end{tikzcd}\]
 By Lemma \ref{lemserre}, the first vertical $\imath^\ast$ is an isomorphism.
 The upper exact sequence implies that up to homotopy there are exactly two maps $\psi$ satisfying $\imath\psi=2_K$:
 \[\psi_1=\psi\quad\text{and}\quad \psi_2=\psi+\partial\varepsilon,\]
  where  $\varepsilon\in H^{n+1}(K_n(\z{r});\z{})\cong\z{}$ is a generator.
  The second commutative square  and equation then implies that
 \begin{equation*}
    \psi_1\imath=2_F \quad\text{or} \quad\psi_1\imath=2_F+\partial\varepsilon',
 \end{equation*}
 where $\varepsilon'=\imath^\ast(\varepsilon)\in H^{n+1}(F_n(\zp{r});\z{})\cong \z{}$ is a generator.
 
 If $\psi_1\imath=2_F$, the \textbf{claim} is proved; if not,  replacing $\psi_1$ by $\psi_2=\psi+\partial\varepsilon$, then \[\psi_2\imath=2_F+\partial(\varepsilon'+\varepsilon\imath)=2_F+2\partial \varepsilon'=2_F.\]
 The proof of the \textbf{claim} is completed.
 
 There is a commutative exact sequence diagram induced by (\ref{diagpsi}):
    \[\begin{tikzcd}[column sep=small]
       H^{n-1}(X;\z{r-1})\ar[r,"\delta"]\ar[d,"{[2]}"]&H^{n}(X;\z{})\ar[d,"\mathrm{Sq}^1"]\ar[r,"{[2^{r-1}]}"]&H^n(X;\z{r})\ar[r,"\rho"]\ar[d,"\psi_\sharp"]\ar[dr,dotted,"2"description]&H^{n}(X;\z{r-1})\ar[d,"{[2]}"]\\
             H^{n-1}(X;\z{r})\ar[r,"\sq{\z{r}}"]&H^{n+1}(X;\z{})\ar[r]&\pi^n(X;\z{r})\ar[r,two heads, "h^n"]&H^n(X;\z{r})
    \end{tikzcd}\]
 Given $\gamma\in {}_2H^n(X;\z{r})$, $0=2\gamma=[2]\rho(\gamma)$ is equivalent to $\rho(\gamma)=0$, by the condition $(a)$.
 There exists $\gamma'\in H^{n}(X;\z{})$ satisfying $[2^{r-1}](\gamma')=\gamma$ and hence for any $\bar{\gamma}\in \pi^n(X;\z{r})$ mapping to $\gamma$ by $h^n$, by the relation $\psi\imath=2_F$ and the middle commutative square, we obtain
 \[2\bar{\gamma}=\psi(\gamma)=\mathrm{Sq}^1(\gamma')\in \coker(\sq{\z{r}})=T_{\Omega \sq{\z{r}}}(X).\]
 Thus by \citep[Lemma on page 63]{Hilton65}, the extension (\ref{EXT}) splits if, in addition,
 \[\mathrm{Sq}^1\big(H^n(X;\z{})\big)\subseteq \sq{\z{r}}\big(H^{n-1}(X;\z{r})\big).\]
 
 $(2)$ Consider the following homotopy fibration diagram
 \[\begin{tikzcd}
 K_n(\Z/4)\ar[r,"\rho"]\ar[d,"\mathrm{Sq}^1_{\Z/4}"]&K_n(\z{})\ar[r,"\mathrm{Sq}^1"]\ar[d,"\psi'"]&K_{n+1}(\z{})\ar[r,"{[2]}"]\ar[d,"0"]&K_{n+1}(\Z/4)\ar[d,"\mathrm{Sq}^1_{\Z/4}"]\\
 K_{n+1}(\z{})\ar[r,"\partial"]&F_n(\z{})\ar[r,"\imath"]&K_n(\z{})\ar[r,"\sq{}"]&K_{n+2}(\z{})
 \end{tikzcd}\]
 We argue in a similar way to that of the proof of the \textbf{claim} above that the map $\psi'\colon K_n(\Z/2)\to F_n(\z{})$ satisfies the relations:
 \[\imath\psi'=2_K=0,\quad\psi'\imath=2_F.\]
 Consider the following induced commutative diagram:
 \[\begin{tikzcd}[column sep=small]
    H^{n}(X;\z{})\ar[r,"{[2]}"]\ar[d,"0"]&H^{n}(X;\Z/4)\ar[d,"\mathrm{Sq}^1_{\Z/4}"]\ar[r,"\rho"]&H^n(X;\z{})\ar[dr,"2"description, dotted]\ar[r,"\mathrm{Sq}^1"]\ar[d,"\psi'_\sharp"]&H^{n+1}(X;\z{})\ar[d,"0"]\\
    H^{n+1}(X;\z{})\ar[r,"\sq{}"]&H^{n+1}(X;\z{})\ar[r]&\pi^n(X;\z{})\ar[r,two heads,"h^n"]&H^n(X;\z{})
 \end{tikzcd}\]
 Given $\gamma\in {}_2H^n(X;\z{})=H^n(X;\z{})$, by assumption and the exactness of the upper row, there exists $\gamma'\in H^n(X;\Z/4)$ such that $\rho (\gamma')=\gamma$. For any $\bar{\gamma}\in \pi^n(X;\z{})$ we have
 \[\mathrm{Sq}^1_{\Z/4}(\gamma')=\psi'(\gamma)=2\bar{\gamma}=0.\]
 Thus  the extension (\ref{EXT}) splits, by \citep[Lemma on page 63]{Hilton65}.
  \end{proof}
 \end{proposition}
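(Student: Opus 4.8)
The plan is to reduce the splitting question to the behaviour of multiplication by $2$ via Hilton's criterion \citep[Lemma on page 63]{Hilton65}: the extension (\ref{EXT}) of abelian groups, with kernel $T_{\Omega\sq{\z{r}}}(X)$ and quotient $H^n(X;\z{r})$, splits if and only if every $2$-torsion class $\gamma\in{}_2H^n(X;\z{r})$ admits a lift $\bar\gamma\in\pi^n(X;\z{r})$ with $2\bar\gamma=0$. Thus everything comes down to computing $2\bar\gamma\in T_{\Omega\sq{\z{r}}}(X)=\coker(\sq{\z{r}})$ for an arbitrary lift $\bar\gamma$ of a given $2$-torsion class, in terms of primary operations.

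To perform this computation I would realize doubling at the space level. Writing (\ref{fibG}) with $p=2$ as $F_n(\z{r})\xra{\imath}K_n(\z{r})\xra{\sq{\z{r}}}K_{n+2}(\z{})$, the doubling map $2_K$ on $K_n(\z{r})$ satisfies $\sq{\z{r}}\circ 2_K=0$: indeed $2_K$ factors as $[2]\circ\rho$ through the mod-$2^{r-1}$ reduction (diagram (\ref{diag:pK})), while $\sq{\z{r}}$ begins with reduction mod $2$, and reducing a multiple of $2$ vanishes. Hence $2_K$ lifts to $\psi\colon K_n(\z{r})\to F_n(\z{r})$ with $\imath\psi=2_K$. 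The essential additional requirement is the companion relation $\psi\imath=2_F$, with $2_F$ the doubling map on the loop space $F_n(\z{r})$; this is what allows $\psi$ to transport multiplication by $2$ from the base into the fibre, where the extension class lives.

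The main obstacle is precisely securing this choice of $\psi$. The lifts of $2_K$ form a torsor over $[K_n(\z{r}),\Omega K_{n+2}(\z{})]\cong H^{n+1}(K_n(\z{r});\z{})$, which by Lemma \ref{lemchEM} is $\z{}$ generated by the Bockstein $\beta_r\iota_n$; so there are exactly two candidates, $\psi$ and $\psi+\partial\varepsilon$. Using that $\imath^\ast\colon H^{n+1}(K_n(\z{r});\z{})\to H^{n+1}(F_n(\z{r});\z{})$ is an isomorphism (Lemma \ref{lemserre}, applied to (\ref{fibG})), I would compute $\psi\imath$ to be either $2_F$ or $2_F+\partial\varepsilon'$ with $\varepsilon'=\imath^\ast\varepsilon$; in the second case passing to the other candidate changes $\psi\imath$ by $\partial\varepsilon'$, and the two copies sum to $2\partial\varepsilon'=0$, so one candidate does satisfy both relations.

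Finally I would fit $\psi$ into the map of fibrations relating the coefficient sequence $0\to\z{}\xra{\cdot 2^{r-1}}\z{r}\xra{\rho}\z{r-1}\to 0$ (whose connecting Bockstein is $\mathrm{Sq}^1$) to (\ref{fibG}), and apply $[X,-]$. For a $2$-torsion class $\gamma$, hypothesis (a) forces $\rho(\gamma)=0$, so $\gamma=[2^{r-1}]\gamma'$ for some $\gamma'\in H^n(X;\z{})$; the relation $\psi\imath=2_F$ gives $2\bar\gamma=\psi_\sharp(\gamma)$, and the commuting left square $\psi\circ[2^{r-1}]=\partial\circ\mathrm{Sq}^1$ then yields the key formula $2\bar\gamma=\mathrm{Sq}^1(\gamma')$ in $\coker(\sq{\z{r}})$. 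By Hilton's criterion, (\ref{EXT}) splits exactly when every such $\mathrm{Sq}^1(\gamma')$ lies in $\sq{\z{r}}(H^{n-1}(X;\z{r}))$, i.e.\ precisely when (b) holds, which gives both the sufficiency and, under (a), the necessity. For $r=1$ the same scheme runs with $\Z/4$ replacing $\z{r}$: here $2_K=0$, the lift $\psi'\colon K_n(\z{})\to F_n(\z{})$ still satisfies $\psi'\imath=2_F$, and triviality of $\mathrm{Sq}^1$ on $H^n(X;\z{})$ makes $\rho\colon H^n(X;\Z/4)\to H^n(X;\z{})$ surjective; since $\mathrm{Sq}^1_{\Z/4}=\mathrm{Sq}^1\circ\rho$ then vanishes on the chosen lift $\gamma'$, every $2\bar\gamma=0$ and (\ref{EXT}) splits.
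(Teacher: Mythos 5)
Your proposal is correct and takes essentially the same route as the paper's own proof: the same Bockstein-coefficient-sequence-to-fibration diagrams, the same two-candidate torsor argument producing $\psi$ with $\imath\psi=2_K$ and $\psi\imath=2_F$, the same key identity $2\bar{\gamma}=\mathrm{Sq}^1(\gamma')$ under hypothesis (a) combined with Hilton's criterion, and the same $\Z/4$ variant for $r=1$. No substantive differences to report.
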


 \begin{proposition}\label{propEXTsplit:p}
 Suppose the assumptions in Proposition \ref{propEXT} hold and let $p\geq 3$, $1\leq r<\infty$. The extension (\ref{EXT}) for $G=\zp{r}$ splits if one of the following conditions holds:
 \begin{enumerate}
    \item $r\geq 2$ and the homomorphism $[p]\colon H^n(X;\Z/p^{r-1})\to H^n(X;\zp{r})$ is injective.
    \item $r=1$.
 \end{enumerate}
 
 \begin{proof}
 $(1)$ There is a homotopy fibration diagram
    \[\begin{tikzcd}[column sep=small]
       K_n(\zp{})\ar[r,"{[p^{r-1}]}"]\ar[d,"0"]&K_n(\zp{r})\ar[r,"\rho"]\ar[d,"\phi"]\ar[dr,dotted,"p_K"description]&K_n(\zp{r-1})\ar[r,"\delta"]\ar[d,"{[p]}"]&K_{n+1}(\zp{})\ar[d,"0"]\\
       K_{n+2p-3}(\zp{})\ar[r,"\partial"]&F_n(\zp{r})\ar[r,"\imath"]&K_n(\zp{r})\ar[r,"\mathcal{P}^1_{\zp{r}}"]&K_{n+2p-2}(\zp{})
    \end{tikzcd}\]
 Consider the following commutative exact sequence diagram
 \[\begin{tikzcd}[column sep=small]
    0\ar[r]&{[K(\zp{r},n),F_n(\zp{r})]}\ar[d,"\imath^\sharp"]\ar[r,"\iota_\sharp"]&{[K_n(\zp{r}),K(\zp{r},n)]}\ar[d,"\imath^\sharp"]\\
    0\ar[r]&{[F_n(\zp{r}),F_n(\zp{r})]}\ar[r,"\iota_\sharp"]&{[F_n(\zp{r}),K_n(\zp{r})]}
 \end{tikzcd}\]
 where $H^{n+2p-3}(F_n(\zp{r});\Z/p)=0$ is due to Lemma \ref{lemserre}. It follows that the map $\phi$ satisfying $\imath\phi=p_K$ is unique and satisfies the relation $\phi\imath=p_F$
 
 The remainder of the proof of $(1)$ is similar to that of Proposition \ref{propEXTsplit:2}, by analyzing the induced commutative exact sequence diagram:
    \begin{equation*}
       \begin{tikzcd}[column sep=small]
          H^{n-1}(X;\zp{r-1})\ar[r]\ar[d,"{[p]}"]&H^{n}(X;\zp{})\ar[d,"0"]\ar[r,"{[p^{r-1}]}"]&H^n(X;\zp{r})\ar[dr,dotted,"p" description]\ar[r,"{\rho}"]\ar[d,"\phi_\sharp"]&H^{n}(X;\zp{r-1})\ar[d,"{[p]}"]\\
          H^{n-1}(X;\zp{r})\ar[r,"\pzpr"]&H^{n+2p-3}(X;\zp{})\ar[r]&\pi^n(X;\zp{r})\ar[r,two heads, "h^n"]&H^n(X;\zp{r})
       \end{tikzcd}
    \end{equation*}
 
  $(2)$ There is a homotopy fibration diagram
  \[\begin{tikzcd}
    K_n(\zp{2})\ar[r,"\rho"]\ar[d,"0"]&K_n(\zp{})\ar[r,"\delta"]\ar[d,"\phi'"]&K_{n+1}(\zp{})\ar[r,"{[p]}"]\ar[d,"0"]&K_{n+1}(\zp{2})\ar[d,"0"]\\
    K_{n+2p-3}(\zp{})\ar[r,"\partial"]&F_n(\zp{})\ar[r,"\imath"]&K_n(\zp{})\ar[r,"\mathcal{P}^1"]&K_{n+2p-2}(\zp{})
    \end{tikzcd}\]
  There hold relations
   \[\imath\phi'=p_K=0, \quad\phi'\imath=p_F=0\]
 and the extension (\ref{EXT}) splits.
 \end{proof}
 \end{proposition}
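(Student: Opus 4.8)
The plan is to run the argument in close parallel with the proof of Proposition~\ref{propEXTsplit:2}, but to exploit the fact that for an odd prime the obstruction degree $n+2p-3$ is strictly larger than $n+1$, so the degree-one Bockstein can no longer contribute. By the lemma of Hilton invoked after Proposition~\ref{propEXT}, the extension (\ref{EXT}) is governed entirely by multiplication by $p$ on the quotient $H^n(X;\zp{r})$. Thus it suffices to show that for every $p$-torsion class $\gamma\in H^n(X;\zp{r})$ and every lift $\bar\gamma\in\pi^n(X;\zp{r})$ of $\gamma$ under $h^n$, one has $p\bar\gamma=0$ in $T_{\Omega\mathcal{P}^1_{\zp{r}}}(X)$; running this over a cyclic decomposition of $H^n(X;\zp{r})$ (applying the conclusion to $d=p^{k-1}c$ for a generator $c$ of a $\zp{k}$-summand, where $d$ is $p$-torsion and $p^{k-1}\bar c$ is a lift of it) then lifts each summand and splits the extension.

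The first step is to realise multiplication by $p$ geometrically. Using the coefficient fibration $K_n(\zp{})\xra{[p^{r-1}]}K_n(\zp{r})\xra{\rho}K_n(\zp{r-1})$ together with the defining fibration $F_n(\zp{r})\xra{\imath}K_n(\zp{r})\xra{\mathcal{P}^1_{\zp{r}}}K_{n+2p-2}(\zp{})$, I would assemble a map of fibrations analogous to the one in Proposition~\ref{propEXTsplit:2}, in which the $p$-th power map $p_K$ on $K_n(\zp{r})$ factors as $[p]\circ\rho$. Since $\mathcal{P}^1_{\zp{r}}$ is a loop map, $\mathcal{P}^1_{\zp{r}}\circ p_K=p\cdot\mathcal{P}^1_{\zp{r}}$ is null because its target is a mod-$p$ space; hence $p_K$ lifts to $\phi\colon K_n(\zp{r})\to F_n(\zp{r})$ with $\imath\phi=p_K$. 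The decisive point, and where oddness of $p$ enters, is rigidity: two such lifts differ by a class in $H^{n+2p-3}(K_n(\zp{r});\zp{})$, which vanishes for $p\geq3$ since by the table of Lemma~\ref{lemchEM} (as used in Lemma~\ref{lemconn}) the generators jump from degree $n+1$ to degree $n+2p-2$. So $\phi$ is unique, and comparing $\imath(\phi\imath)=p_K\imath=\imath\,p_F$ against the vanishing of $H^{n+2p-3}(F_n(\zp{r});\zp{})$ from Lemma~\ref{lemserre} forces the fibre identity $\phi\imath=p_F$.

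Next I would apply $[X,-]$ to obtain the commutative ladder connecting the long exact cohomology sequence of the coefficient fibration to the extension (\ref{EXT}), with $\phi_\sharp\colon H^n(X;\zp{r})\to\pi^n(X;\zp{r})$ computing $p\bar\gamma=\phi_\sharp(\gamma)$ via $\phi\imath=p_F$. For $\gamma$ with $p\gamma=0$, the relation $p=[p]\circ\rho$ and the injectivity of $[p]$ in part (1) give $\rho(\gamma)=0$, so exactness yields $\gamma'\in H^n(X;\zp{})$ with $[p^{r-1}]\gamma'=\gamma$. The left vertical map $K_n(\zp{})\to K_{n+2p-3}(\zp{})$ is null (again because no degree-one operation reaches degree $n+2p-3$ for $p\geq3$), so $\phi_\sharp\circ[p^{r-1}]=0$ and $p\bar\gamma=\phi_\sharp(\gamma)=0$, as required. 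For part (2), where $r=1$, the map $p_K$ on $K_n(\zp{})$ is already null, forcing $\phi'\imath=p_F=0$; consequently $F_n(\zp{})$, and hence $\pi^n(X;\zp{})\cong[X,F_n(\zp{})]$ together with every term of (\ref{EXT}), is a $\zp{}$-vector space, and the surjection $h^n$ splits automatically.

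The main obstacle I anticipate is precisely the rigidity argument for $\phi$: one must confirm that the relevant cohomology of both the base and the fibre in degree $n+2p-3$ vanishes for odd $p$, since this is exactly what upgrades the mere existence of a lift (which it shares with the $p=2$ case) to the clean identities $\imath\phi=p_K$ and $\phi\imath=p_F$ with no correction term. Once these are secured, splitting is a formal consequence of the ladder and Hilton's multiplication-by-$p$ criterion, in sharp contrast to $p=2$, where a genuine $\mathrm{Sq}^1$ obstruction survives and produces the condition (\ref{mainthm1:2.a}).
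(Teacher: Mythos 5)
Your proposal is correct and follows essentially the same route as the paper: you lift the $p$-th power map $p_K$ to $\phi\colon K_n(\zp{r})\to F_n(\zp{r})$, use the vanishing of $H^{n+2p-3}$ of both $K_n(\zp{r})$ and $F_n(\zp{r})$ for $p\geq 3$ to get uniqueness and the identity $\phi\imath=p_F$, and then run the same ladder argument with the injectivity of $[p]$ and the null left-hand vertical map to conclude $p\bar\gamma=0$ via Hilton's criterion. Your explicit reduction to cyclic summands and your vector-space observation in the $r=1$ case are just slightly more detailed versions of steps the paper leaves implicit.
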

 
 \begin{example}\label{ex:Moore}
    Let $n\geq 3,r,s\geq 1$.
  \begin{enumerate}
     \item Write $M^n_{2^s}=M_n(\z{s})$. There is a short exact sequence
     \begin{equation}\label{sesMoore}
        0\to H^{n+1}(M^n_{2^s};\z{})\to \pi^n(M^n_{2^s};\z{r})\to H^n(M^n_{2^s};\z{r})\to 0,
     \end{equation}
    which splits if and only if $r\geq 2$.
    \item  Generally, for any an $(n-1)$-connected $(n+1)$-dimensional finite complex $X$, the short exact sequence
    \begin{equation}\label{ses:An2}
     0\to H^{n+1}(X;\z{})\to \pi^n(X;\z{r})\xra{h^n} H^n(X;\z{r})\to 0
    \end{equation}
  splits if and only if $H_n(X)$ has no direct summand $\z{}$.
  \end{enumerate}
 \begin{proof}
 $(1)$ Since $M^n_{2^s}$ is $(n-1)$-connected, $T_{\Omega \sq{\z{r}}}(M^n_{2^s})\cong H^{n+1}(M^n_{2^s};\z{})$. The condition $(a)$ in Proposition \ref{propEXTsplit:2} $(1)$ is true. Thus the extension (\ref{sesMoore}) splits if and only if
 \[0=\mathrm{Sq}^1\colon H^n(M^n_{2^s};\z{})\to H^{n+1}(M^n_{2^s};\z{}),\]
 which holds if and only if $s\geq 2$, by the Bockstein exact sequence associated to the short exact sequence $0\to \z{}\to \Z/4\to \z{}\to 0$. The statement for the mod $2^s$ Moore space follows by the Spanier-Whitehead duality:
 \[[M^n_{2^s},M^n_{2^r}]\cong [M^n_{2^r},M^n_{2^s}].\]
 
 $(2)$ It is well-known that an $(n-1)$-connected $(n+1)$-dimensional finite complex $X$ has the homotopy type of a wedge of spheres and mod $p^r$ Moore spaces for different primes $p\geq 2$ and positive integers $r$; that is, there is a homotopy equivalence
 \[X\simeq \bigvee_{i}S^n_i\vee \bigvee_{j}S^{n+1}_j\vee \bigvee_{p,r}M_n(\zp{r}),\]
 where $S^{n}_i=S^n$ and $S^{n+1}_j=S^{n+1}$.
 By Theorem \ref{thmPeterson} (\ref{thmACH}), it suffices to prove $(2)$ for the indecomposable spaces $X=S^n,S^{n+1}$ and $M_n(\zp{r})$. The $X=S^n,S^{n+1}$ or $M_n(\zp{r})$ with $p\geq 3$ cases are trivial and the equivalent condition in $(1)$ is obviously characterized by the condition that $H_n(X)$ has no direct summand $\z{}$. The proof of $(2)$ completes.
   \end{proof}
 \end{example}

 \begin{proof}[Proof of Theorem \ref{mainthm1}]
    Theorem \ref{mainthm1} consists of Proposition \ref{propEXT}, \ref{propEXT:local}, \ref{propEXTsplit:2}, \ref{propEXTsplit:p}.
 \end{proof}

 \section{Unstable homotopy theory and modular cohomotopy}\label{sec:unstHtp}
 
 In this section we combine some classic unstable homotopy theory of Moore space to study the modular cohomotopy groups.
 Let $p$ be a prime.
 It is well-known that $S^{2n-1}_{(p)}$ is an H-space for $p\geq 3, n\geq 1$ and $S^{n-1}_{(2)}$ is an H-space if and only if $n=1,2,4,8$ \cite{Adams61}.
 If $p\geq 5,n\geq 1$ (or $p\geq 2$, $n=1,2$), $S^{2n-1}_{(p)}$ is homotopy associative; if $p\geq 5$ (or $p\geq 3,n\geq 2$), $S^{2n-1}_{(p)}$ is homotopy commutative, see \citep[page 465, Exercises]{Neisendorferbook}.
 
 \subsection{$(2n+1)$-connected cover of $S^{2n+1}$}\label{sec:mod-p}
 Consider the following $p$-localized homotopy fibration sequence
 \begin{align*}
    &K_{2n}(\zz{p})\xra{\partial_{(p)}}\s{2n+1}_{(p)}\xra{\pi_{(p)}}S^{2n+1}_{(p)}\xra{\iota_{(p)}}K_{2n+1}(\zz{p})
 \end{align*}
 where  $\iota=\iota_{2n+1}$ represents a generator of $H^{2n+1}(S^{2n+1})$.
 It is clear that for each $p\geq 3$, the map $\iota_{(p)}$ is an H-map and hence the above sequence is an H-fibration sequence.
 
 \begin{lemma}\label{lem:oc}
  For each prime $p\geq 2$,  $\iota_{(p)}=\Omega \alpha$ for some  $\alpha\in H^{4}(\B S^{3}_{(p)};\zz{p})$, hence $\s{3}_{(p)}$ inherits the H-space structure of $S^{3}_{(p)}$.
    \begin{proof}
    Since $S^3$ has the homotopy type of a loop space, so does $\s{3}$; hence the classifying space $\B S^{3}_{(p)}$ exists. The evaluation $e\colon \Sigma\Omega \B S^{3}_{(p)}\to\B S^{3}_{(p)}$ is $7$-connected and hence the loop map
       \[ [\B S^{3}_{(p)},K_{4}(\zz{p})]\xra{e^\sharp}[\Sigma S^{3}_{(p)},K_{4}(\zz{p})]\cong [S^{3}_{(p)},K_{3}(\zz{p})]\]
       is surjective, by \citep[Proposition 2.4.13]{Arkowitzbook}.
    \end{proof}
 \end{lemma}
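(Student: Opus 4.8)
The plan is to establish the delooping $\iota_{(p)}=\Omega\alpha$ first, after which the H-space claim is formal. The engine is the suspension--loop adjunction together with a connectivity estimate for the evaluation (counit) map. Since $S^3=SU(2)$ is a loop space with classifying space $\B S^3_{(p)}=\HP^\infty_{(p)}$, we have $S^3_{(p)}\simeq\Omega\B S^3_{(p)}$. For the evaluation $e\colon\Sigma\Omega\B S^3_{(p)}\to\B S^3_{(p)}$ and any target $Z$, the adjunction $[\Sigma\Omega\B S^3_{(p)},Z]\cong[\Omega\B S^3_{(p)},\Omega Z]$ carries $e^\sharp(\alpha)=\alpha\circ e$ to $\Omega\alpha$. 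Taking $Z=K_4(\zz{p})$ (so $\Omega Z=K_3(\zz{p})$), the task reduces to showing that $\iota_{(p)}\in[S^3_{(p)},K_3(\zz{p})]=H^3(S^3_{(p)};\zz{p})$ lies in the image of $e^\sharp$.

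First I would compute the connectivity of $e$. As $\B S^3_{(p)}=\HP^\infty_{(p)}$ has cells only in dimensions $0,4,8,\dots$, it is $3$-connected, and the Freudenthal/James estimate makes the evaluation of an $n$-connected space $(2n+1)$-connected; here $2\cdot 3+1=7$. Consequently $e^\ast$ is an isomorphism on cohomology in degrees $\le 6$, in particular surjective in degree $4$. Under the identifications $[\B S^3_{(p)},K_4(\zz{p})]=H^4(\B S^3_{(p)};\zz{p})$ and $[\Sigma S^3_{(p)},K_4(\zz{p})]\cong[S^3_{(p)},K_3(\zz{p})]$, this is exactly the surjectivity of
\[
e^\sharp\colon[\B S^3_{(p)},K_4(\zz{p})]\xra{}[\Sigma S^3_{(p)},K_4(\zz{p})]\cong[S^3_{(p)},K_3(\zz{p})],
\]
which one may also read off from the connectivity/surjectivity criterion in \citep[Proposition 2.4.13]{Arkowitzbook}. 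Hence some $\alpha\in H^4(\B S^3_{(p)};\zz{p})$ satisfies $e^\sharp(\alpha)=\iota_{(p)}$, i.e.\ $\Omega\alpha=\iota_{(p)}$.

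For the H-space assertion I would loop the fibration defining $\alpha$. Let $F_\alpha$ be the homotopy fibre of $\alpha\colon\B S^3_{(p)}\to K_4(\zz{p})$. Applying $\Omega$ yields a fibration sequence
\[
\Omega F_\alpha\xra{}\Omega\B S^3_{(p)}=S^3_{(p)}\xra{\Omega\alpha=\iota_{(p)}}\Omega K_4(\zz{p})=K_3(\zz{p}).
\]
Since $\s{3}_{(p)}$ is by definition the homotopy fibre of $\iota_{(p)}$, we get $\s{3}_{(p)}\simeq\Omega F_\alpha$, so $\s{3}_{(p)}$ is a loop space, a fortiori an H-space. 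Moreover the fibre inclusion $\s{3}_{(p)}\hookrightarrow S^3_{(p)}$ is the looping of $F_\alpha\to\B S^3_{(p)}$, hence a loop (H-)map for the loop multiplication on $S^3_{(p)}=\Omega\B S^3_{(p)}$; this is the precise sense in which $\s{3}_{(p)}$ inherits the H-structure of $S^3_{(p)}$.

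I expect the only real points needing care---rather than a genuine obstacle---to be two compatibilities. One must confirm that the adjunction identifies $e^\sharp(\alpha)$ with $\Omega\alpha$, so that the class produced deloops $\iota_{(p)}$ itself and not some other element of $H^3(S^3_{(p)};\zz{p})$; and one must confirm that the relevant degree $4$ lies strictly below the connectivity $7$ of $e$, so that $e^\sharp$ is genuinely onto there. Both are comfortably satisfied, which is why the lemma holds uniformly for all primes $p\ge 2$.
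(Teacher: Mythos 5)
Your proposal is correct and follows essentially the same route as the paper: both deloop $\iota_{(p)}$ by exploiting that $\B S^3_{(p)}$ exists, that the evaluation $e\colon\Sigma\Omega\B S^3_{(p)}\to\B S^3_{(p)}$ is $7$-connected, and that $e^\sharp$ is therefore surjective onto $[S^3_{(p)},K_3(\zz{p})]$ (the paper cites \citep[Proposition 2.4.13]{Arkowitzbook} for exactly the connectivity-implies-surjectivity step you verify by hand). Your extra paragraph making the H-space inheritance explicit via $\s{3}_{(p)}\simeq\Omega F_\alpha$ is a correct elaboration of what the paper leaves implicit.
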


 \begin{proposition}\label{propconn:oc}
    Let $p$ be a prime and let $n\geq 1$.
 \begin{enumerate}
 \item There exists a map
    $\alpha\colon M_{2n+2p-2}(\zp{})\to \s{2n+1}_{(p)}$ which is $N$-connected with
    \[N=\left\{\begin{array}{ll}
       2n+4p-3&\text{for }p\geq 3 \text{ or }p=2,n=1;\\
       2n+3&\text{for }p=2,n\geq 2.
    \end{array}\right.\]
 \item If $p\geq 3$ or $p=2,n=1$, then  there exists a map
 $M_{2n+2p-3}(\zp{})\to \Omega\s{2n+1}_{(p)}$ which is $(2n+4p-4)$-connected.
 
 \item For each $p\geq 2$, there exists a map
 $M_{2p+1}(\zp{})\to \B\s{3}_{(p)}$ which is $4p$-connected.
 \end{enumerate}

 \begin{proof}
  For simplicity we write $M_{k}=M_{k}(\Z/p)$ for $k\geq 1$ in the proof.  The $n=1$ cases in $(1)$ and $(2)$ refer to \citep[Corollary 4.6.2 and  Proposition 4.6.4]{Neisendorferbook}, respectively.
 Suppose $n>1$.
 
 $(1)$ By Lemma \ref{lemToda} we have an isomorphism
 \[[M_{2n+2p-2},S^{2n+1}]\xra[\cong]{i_p^\sharp}[S^{2n+2p-2},S^{2n+1}]\cong\zp{}\langle \alpha_1(2n+1)\rangle.\]
 Let $\bar{\alpha}_1$ be the inverse image of $\alpha_1(2n+1)$ under $i_p^\sharp$. There exists a canonical map $\alpha\colon M_{2n+2p-2}\to\s{2n+1}_{(p)}$ satisfying $\bar{\alpha}_1=\pi_{(p)}\circ \alpha$.
 Since $\pi_{(p)}$ induces an isomorphism of homotopy groups in dimensions greater than $2n+1$,  $\alpha$ is $N$-connected if and only if $\bar{\alpha}_1$ is $N$-connected.

 $(i)$ Let $p\geq 3$. Using the elements defined in Lemma \ref{lemToda}, there hold relation equalities:
 \begin{enumerate}[$(a)$]
    \item $\bar{\alpha}_1\circ i_p=\alpha_1(2n+1)$, by definition.
    \item $\bar{\alpha}_1\circ (i_p\alpha_1(2n+2p-2))=\alpha_1(2n+1)\circ \alpha_1(2n+2p-2)$.
    \item $\bar{\alpha}_1\circ \tilde{\alpha}_1=-\alpha_2$, due to \citep[Proposition 4.2]{Toda59} by taking \[\langle \alpha,\beta,\gamma\rangle=\langle \alpha_1(2n+1),p\iota_{2n+2p-2},\alpha_1(2n+2p-2)\rangle.\]
 \end{enumerate}
 It follows that $\bar{\alpha}_1$ induces an isomorphism of homotopy groups in dimensions  $\leq 2n+4p-4$.
 Observe that $\pi_{2n+4p-3}(M_{2n+2p-2})\neq 0$, while $\pi_{2n+4p-3}(S^{2n+1};p)=0$ by \citep[Theorem 13.4]{TodaBook}. Thus $\bar{\alpha}_1$, or equivalently $\alpha$ is $(2n+4p-3)$-connected if $p\geq 3$.
 
 $(ii)$ If $p=2$, the map $\bar{\alpha}_1=\bar{\eta}_1\colon M_{2n+2}\to S^{2n+1}$ satisfies the relation $\bar{\eta}_1i_2=\eta$.
  By \cite{Baues91} we have $\pi_{2n+3}(M_{2n+2})\cong\z{}\langle i_2\eta\rangle$ and $\pi_{2n+4}(M_{2n+2})\cong \Z/4\langle \tilde{\eta}_1\rangle$, where $\tilde{\eta}_1$ satisfies $j_2\tilde{\eta}_1=\eta$.
  Recall that $\pi_{2n+4}(S^{2n+1};2)\cong \Z/8\langle \nu\rangle$ if $n>1$.
 Hence $\bar{\alpha}_1$ induces an isomorphism of homotopy groups in dimensions $\leq 2n+3$ and therefore $\bar{\alpha}_1$ is $(2n+3)$-connected. This completes the proof of $(1)$.
 
 $(2)$ The loop suspension map  $E\colon M_{2n+2p-3}\to \Omega M_{2n+2p-2}$ is $(4n+4p-7)$-connected.
 Consider the adjoint map $\alpha^\circ=(\Omega \alpha)\circ E$:
 \[ M_{2n+2p-3}\xra{E_1} \Omega M_{2n+2p-2}\to \Omega \s{2n+1}_{(p)}.\]
 Since $(\Omega\alpha)$ is $(2n+4p-4)$-connected and $2n+4p-4\leq 4n+4p-7$ for $n>1$, we see that $\alpha^\circ$ is $(2n+4p-4)$-connected. 
 
 (3)  By \citep[(3.2)]{BM58}, the evaluation map $e\colon \Sigma\s{3}_{(p)}\to \B\s{3}_{(p)}$ is $(4p+1)$-connected.  Then $(1)$ implies that the adjoint map
 \[\alpha^\diamond\colon M_{2p+1}\xra{\Sigma \alpha}\Sigma\s{3}_{(p)}\xra{e}\B\s{3}_{(p)}\]
 is $4p$-connected.
 \end{proof}
 \end{proposition}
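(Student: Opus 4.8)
The plan is to construct all three maps from a single source: the lowest nontrivial $p$-primary homotopy class $\alpha_1(2n+1)\in\pi_{2n+2p-2}(S^{2n+1};p)\cong\Z/p$ (for $p=2$ this is $\eta$), extended over the bottom cell of the relevant Moore space and then pushed in turn through the connected-cover fibration, its loops, and its classifying space.

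\emph{Part (1).} First I would extend $\alpha_1(2n+1)$ over $M_{2n+2p-2}(\zp{})$. The Puppe sequence of the cofibration $S^{2n+2p-2}\xra{i_p}M_{2n+2p-2}\xra{j_p}S^{2n+2p-1}$ places $i_p^\sharp\colon[M_{2n+2p-2},S^{2n+1}]\to\pi_{2n+2p-2}(S^{2n+1};p)$ in an exact sequence; feeding in Lemma \ref{lemToda} together with $\pi_{2n+2p-1}(S^{2n+1};p)=0$ shows $i_p^\sharp$ is an isomorphism, so $\alpha_1(2n+1)$ extends to some $\bar\alpha_1$. Since $M_{2n+2p-2}$ is $(2n+2p-3)$-connected and $H^{2n+1}(M_{2n+2p-2};\zz{p})=0$ for $p\ge 2$, the obstruction $\iota_{(p)}\circ\bar\alpha_1$ to lifting $\bar\alpha_1$ through the fibration $\pi_{(p)}\colon\s{2n+1}_{(p)}\to S^{2n+1}_{(p)}$ vanishes, yielding $\alpha$ with $\pi_{(p)}\alpha=\bar\alpha_1$; as $\pi_{(p)}$ is a homotopy isomorphism above degree $2n+1$, the connectivity of $\alpha$ equals that of $\bar\alpha_1$. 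To compute the latter for $p\ge 3$, I would compare homotopy groups degree by degree via Lemma \ref{lemToda}: the relations $\bar\alpha_1 i_p=\alpha_1(2n+1)$, $\bar\alpha_1(i_p\alpha_1(2n+2p-2))=\alpha_1(2n+1)\alpha_1(2n+2p-2)$, and $\bar\alpha_1\tilde\alpha_1=-\alpha_2$ exhibit $\bar\alpha_1$ as an isomorphism on homotopy through dimension $2n+4p-4$, while $\pi_{2n+4p-3}(S^{2n+1};p)=0$ forces surjectivity one degree higher, giving $N=2n+4p-3$. The last relation is the delicate input and I expect it to be the main obstacle: it requires recognizing $\alpha_2$ as the Toda bracket $\langle\alpha_1(2n+1),p\iota_{2n+2p-2},\alpha_1(2n+2p-2)\rangle$ and applying Toda's composition formula to evaluate $\bar\alpha_1$ on $\tilde\alpha_1$. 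For $p=2,\,n\ge 2$ the comparison instead uses Baues' determination of $\pi_{2n+3}$ and $\pi_{2n+4}$ of $M_{2n+2}(\z{})$; the mismatch $\Z/4\to\Z/8$ at $\pi_{2n+4}$ blocks surjectivity and yields $N=2n+3$, while the remaining case $p=2,\,n=1$ I would quote from Neisendorfer.

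\emph{Part (2).} I would obtain this by looping the map of (1) and precomposing with a loop suspension. Looping gives $\Omega\alpha\colon\Omega M_{2n+2p-2}\to\Omega\s{2n+1}_{(p)}$, which is $(2n+4p-4)$-connected. Since $\Sigma M_{2n+2p-3}(\zp{})\simeq M_{2n+2p-2}(\zp{})$, precomposing with the loop suspension $E\colon M_{2n+2p-3}\to\Omega M_{2n+2p-2}$, which is $(4n+4p-7)$-connected because its source is $(2n+2p-4)$-connected, produces the map $(\Omega\alpha)\circ E$. For $n\ge 2$ one has $2n+4p-4\le 4n+4p-7$, so the composite inherits connectivity $\min=2n+4p-4$; the range fails when $n=1$, a case I would again take from Neisendorfer.

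\emph{Part (3).} Finally I would suspend the $n=1$ instance $\alpha\colon M_{2p}(\zp{})\to\s{3}_{(p)}$ of (1), which is $(4p-1)$-connected, to obtain a $4p$-connected map $\Sigma\alpha\colon M_{2p+1}(\zp{})\to\Sigma\s{3}_{(p)}$. Invoking Lemma \ref{lem:oc} to know that $\s{3}_{(p)}$ is an H-space with classifying space $\B\s{3}_{(p)}$, I would then compose with the evaluation $e\colon\Sigma\s{3}_{(p)}\to\B\s{3}_{(p)}$, which is $(4p+1)$-connected. The composite $e\circ\Sigma\alpha$ then has connectivity $\min(4p,4p+1)=4p$, as required.
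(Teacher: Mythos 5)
Your proposal is correct and follows essentially the same route as the paper: extend $\alpha_1(2n+1)$ over the Moore space via the Puppe sequence and Lemma \ref{lemToda}, lift through $\pi_{(p)}$, verify connectivity degree by degree using Toda's relations (including the bracket identification of $\alpha_2$) for $p\ge 3$ and Baues' computation for $p=2$, then obtain (2) by looping and precomposing with the loop suspension and (3) by suspending the $n=1$ case and composing with the evaluation map. The only difference is cosmetic: you spell out the obstruction-theoretic reason the lift through $\pi_{(p)}$ exists, which the paper leaves implicit.
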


 \begin{proof}[Proof of Theorem \ref{mainthm:oc}]
  By Proposition \ref{propconn:oc} and Lemma \ref{lemJamesUP}, if $p\geq 3,n\geq 1$ or $p=2,n=1$, there are induced bijections
  \begin{equation}\label{Eq:cc-1}
 \pi^{2n+2p-2}(X;\zp{})\xra{1:1}[X,\s{2n+1}_{(p)}]
  \end{equation}
 for $\dim(X)\leq 2n+4p-4$ and
 \begin{equation}\label{Eq:cc-2}
    \pi^{2n+2p-3}(X;\zp{})\xra{1:1}[X,\Omega\s{2n+1}_{(p)}]
 \end{equation}
 for $\dim(X)\leq 2n+4p-5$.
 
 (1) If $p\geq 5$ or $n=1, p\geq 2$, $S^{2n+1}_{(p)}$ and $\s{2n+1}_{(p)}$ are homotopy associative H-spaces. By Lemma \ref{lemJamesUP}, the bijection (\ref{Eq:cc-1}) is an isomorphism.
 Similarly the bijection (\ref{Eq:cc-2}) is an isomorphism of abelian groups if $n\geq 2$.
 For $n=1$, this bijection yields  an abelian group structure on $\pi^{2p-1}(X;\zp{})$, and hence the exact sequence in (\ref{mainthm:oc-1}) follows.
 
 If $p\geq 3$, $\pi^{2n+2p-2}(X;\zp{})~(n\geq 1)$ and $\pi^{2n+2p-3}(X;\zp{})~ (n\geq 2)$ are both vector spaces over $\zp{}$, by Theorem \ref{thmPeterson} (\ref{lemsplit:zp}).
 If $n=1$, by \citep[Proposition 8.4]{Theriault2001}, the Anick's space $T^{2p-1}_{\infty}(p)$ is homotopy equivalent to $\Omega\s{3}_{(p)}$.
 By  \citep[Theorem 1.3]{Theriault2001} and its comments, for each $p\geq 3$, $\Omega\s{3}_{(p)}$ has H-space exponent $p$, which means the order of the identity map is $p$. Hence the group $\pi^{2p-1}(X;\zp{})\cong [X,\Omega\s{3}_{(p)}]$ is annihilated by $p$; that is, $\pi^{2p-1}(X;\zp{})$ is a vector space over $\zp{}$.
 
 (2) By Lemma \ref{lemToda}, the map $\B\s{3}_{(p)}\to K_{2p+1}(\zp{})$ representing a generator of $H^{2p+1}(\B\s{3}_{(p)};\zp{})\cong\zp{}$ is $(4p-2)$-connected.  It follows that the induced homomorphism
 \[[X,\B\s{3}_{(p)}]\xra{}H^{2p+1}(X;\zp{})\] is an isomorphism if $\dim(X)\leq 4p-3$.
 
 There is a homotopy fibration diagram induced by the third homotopy commutative square:
 \[\begin{tikzcd}
    K_{2}(\zz{p})\ar[r,"\partial_{(p)}"]\ar[d,equal]&\s{3}_{(p)}\ar[d,"\beta", dashed]\ar[r]&S^{3}_{(p)}\ar[d,"\alpha"]\ar[r,"\iota_{(p)}"]&K_{3}(\zz{p})\ar[d,equal]\\
    K_{2}(\zz{p})\ar[r,"\mathcal{P}^1_{\zz{p}}"]&K_{2p}(\zp{})\ar[r]&F_1(\zz{p})\ar[r]&K_{3}(\zz{p})
 \end{tikzcd}\]
 By Lemma \ref{lem:oc} (1), the above fibration diagram extends from the right by
 \begin{equation}\label{diag:B}
    \begin{tikzcd}
       K_{3}(\zz{p})\ar[r,"\B \partial_{(p)}"]\ar[d,equal]&\B\s{3}_{(p)}\ar[d,"\B\beta"]\\
       K_{3}(\zz{p})\ar[r,"\mathcal{P}^1_{\zz{p}}"]&K_{2p+1}(\zp{})
    \end{tikzcd}
 \end{equation}
 Then the extended exact sequence in (\ref{mainthm:oc-2}) follows at once.
 \end{proof}
 
 \begin{proof}[Proof of Corollary \ref{maincor}]
 (1) If $\dim(X)\leq 6$, the canonical map $M_5(\z{})\to \B\s{3}_{(2)}$ in Proposition \ref{propconn:oc} (3) induces an isomorphism
 $$\pi^5(X;\z{})\cong [X,\B\s{3}_{(2)}].$$
  Then (1) follows by applying Theorem \ref{mainthm:oc}.
 
 (2) The map $\B\beta\colon \B\s{3}_{(2)}\to K_5(\z{})$ defined by the commutative square (\ref{diag:B}) with $(n,p)=(1,2)$ gives a generator of $H^5(\B\s{3}_{(2)};\z{})\cong\z{}$ and is $6$-connected. Thus $[X,\B\s{3}_{(2)}]\to H^5(X;\z{})$ is an isomorphism for $\dim(X)\leq 5$. By Lemma \ref{lem:oc} and \citep[Theorem 1.1]{LT72-B}, the central extension (\ref{EXT:5-cpx}) then follows by (1) and (\ref{diag:B}).
 
 Since $\ker(\mathrm{Sq}^2_{\zz{2}})$ has order $2$,  the central extension (\ref{EXT:5-cpx}) is determined by the two functions $\Gamma, \Phi_2$ given in the statement (2), by \citep[Theorem 3.1]{LT72-B}. We prove the formula for $\Phi_2$ as follows.
 There is a homotopy fibration diagram:
 \[\begin{tikzcd}
    K_{2}(\zz{2})\ar[d,equal]\ar[r,"\rho"]&K_{2}(\z{})\ar[d,"\partial"]\ar[r]&K_3(\zz{2})\ar[d,"\phi",dashed]\ar[r,"2"]&K_3(\zz{2})\ar[d,equal]\ar[r,"\rho"]&K_3(\z{})\ar[d,"\B\partial"]\\
    K_2(\zz{2})\ar[r,"\partial_{(2)}"]&\s{3}_{(2)}\ar[r]&S^3_{(2)}\ar[r,"\iota_{(2)}"]&K_3(\zz{2})\ar[r,"\B\partial_{(2)}"]&\B \s{3}_{(2)}
 \end{tikzcd}\]
 where $\rho$ is the mod $2$ reduction.
 By a similar argument to that of the \textbf{claim} of the proof of Proposition \ref{propEXTsplit:2} (1) we deduce that there is an appropriate $\phi_k$ satisfying the relations
 \[\iota_{(2)}\circ\phi=2, \qquad\phi\circ \iota_{(2)}=2.\]
 Consider the induced commutative diagram of exact sequences:
 \[\begin{tikzcd}
    H^2(X;\zz{2})\ar[r]\ar[d,equal]&H^2(X;\z{})\ar[d,"\partial_{2\sharp}"]\ar[r,"\delta"]&H^3(X;\zz{2})\ar[r,"{[2]}"]\ar[d,"\phi_{\sharp}"]&H^3(X;\zz{2})\ar[d,equal]\\
    H^2(X;\zz{2})\ar[r,"\partial_{\sharp}"]&\pi^4(X;\z{})\ar[r]&\pi^3(X;\zz{2})\ar[r,"h^3"]&H^3(X;\zz{2})
 \end{tikzcd}\]
 Then given any $\alpha=h^3(\bar{\alpha})=\ker(\mathrm{Sq}^2_{\zz{2}})$ with $\bar{\alpha}\in \pi^3(X;\zz{2})$, $2\cdot \alpha=0$ implies that there exists $\alpha'\in H^2(X;\z{})$ such that $\delta(\alpha')=\alpha$ and we obtain
  \[\bar{\alpha}^2=\phi(\alpha)=\partial_2(\alpha')\in \frac{\pi^4(X;\z{})}{\partial_\sharp(H^2(X;\zz{2}))}.\]
 \end{proof}

 \subsection{The double suspension}\label{sec:doubsus}
 Consider the $p$-localized H-fibration sequence constructed by Gray \cite[Theorem 9]{Gray88} for each prime $p\geq 5$:
 \begin{equation}\label{fib:Cn}
    C(n)_{(p)}\xra{\Omega v_n} S^{2n-1}_{(p)}\xra{E^2_{(p)}}\Omega^2 S^{2n+1}_{(p)}\xra{v_n}\B C(n)_{(p)}
 \end{equation}

 \begin{lemma}\label{lem:Cn}
  Let $p\geq 3$ be a prime and let $n\geq 1$.
  \begin{enumerate}
     \item There exists a canonical map $\beta\colon M_{2pn-3}(\zp{})\to C(n)_{(p)}$ which is $(2pn+2n-5)$-connected.
     \item There exists a canonical map $M_{2pn-2}(\zp{})\to \B C(n)_{(p)}$ which is $(2pn+2n-4)$-connected.
  \end{enumerate}
  \begin{proof}
  $(1)$ From \citep[page 309]{Neisendorferbook}, for each $p\geq 3,n\geq 1$, we have
    \begin{equation*}
       H_k(C(n)_{(p)})\cong\left\{\begin{array}{ll}
          \zp{}&\text{ for }k=2pn-3;\\
          0&\text{ otherwise for }k\leq 2pn+2n-5.
       \end{array}\right.
    \end{equation*}
  It follows that the canonical inclusion map $\beta\colon M_{2pn-3}(\zp{})\to C(n)_{(p)}$ is $(2pn+2n-5)$-connected.  
 
  $(2)$ The evaluation map $e\colon \Sigma C(n)_{(p)}=\Sigma\Omega\B C(n)_{(p)}\to \B C(n)_{(p)}$ is $(4pn-5)$-connected,  hence the adjoint map $\beta^{\diamond}=e\circ \Sigma(\beta)$ is $(2pn+2n-4)$-connected and the proof completes.
  \end{proof}
 \end{lemma}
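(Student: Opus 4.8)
The plan is to obtain both maps from a single homological input---that the bottom cells of $C(n)_{(p)}$ already assemble into a mod $p$ Moore space---and then to read off the connectivity from elementary range arguments. The essential ingredient is the low-dimensional homology of $C(n)_{(p)}$ computed by Neisendorfer: $H_k(C(n)_{(p)})\cong\zp{}$ for $k=2pn-3$ and $H_k(C(n)_{(p)})=0$ for all other $k\leq 2pn+2n-5$. In particular $C(n)_{(p)}$ is $(2pn-4)$-connected and simply connected, so by the Hurewicz theorem $\pi_{2pn-3}(C(n)_{(p)})\cong\zp{}$.

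For the first part I would start from a generator of $\pi_{2pn-3}(C(n)_{(p)})\cong\zp{}$, represented by a map $S^{2pn-3}\to C(n)_{(p)}$. Since this class has order $p$, the composite $S^{2pn-2}\xra{p}S^{2pn-3}\to C(n)_{(p)}$ is nullhomotopic, so the map extends over the top cell of $M_{2pn-3}(\zp{})=S^{2pn-3}\cup_p e^{2pn-2}$ to a canonical map $\beta\colon M_{2pn-3}(\zp{})\to C(n)_{(p)}$. By construction $\beta$ restricts to a generator of $H_{2pn-3}$ on the bottom sphere, and both source and target have reduced homology concentrated in degree $2pn-3$ (equal to $\zp{}$) throughout the range $k\leq 2pn+2n-5$; hence $\beta_\ast$ is a homology isomorphism in all degrees $\leq 2pn+2n-5$. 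The reduced homology of the mapping cone of $\beta$ then vanishes up through degree $2pn+2n-5$, and Whitehead's theorem (used exactly as in the proof of Lemma~\ref{lemconn}) gives that $\beta$ is $(2pn+2n-5)$-connected.

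For the second part I would pass to the classifying space by the delooping device already employed in Proposition~\ref{propconn:oc}(3). As $C(n)_{(p)}$ is $(2pn-4)$-connected, $\B C(n)_{(p)}$ is $(2pn-3)$-connected, so the evaluation $e\colon\Sigma C(n)_{(p)}=\Sigma\Omega\B C(n)_{(p)}\to\B C(n)_{(p)}$ is $(4pn-5)$-connected. Suspending the map of the first part and composing, I set
\[
\beta^{\diamond}\colon M_{2pn-2}(\zp{})=\Sigma M_{2pn-3}(\zp{})\xra{\Sigma\beta}\Sigma C(n)_{(p)}\xra{e}\B C(n)_{(p)}.
\]
Since the suspension of a $(2pn+2n-5)$-connected map is $(2pn+2n-4)$-connected, $\Sigma\beta$ is $(2pn+2n-4)$-connected, and the inequality $4pn-5\geq 2pn+2n-4$ holds for every $n\geq 1,\,p\geq 3$ (the difference is $2n(p-1)-1\geq 3$). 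Thus $e$ is strictly more connected than $\Sigma\beta$, and the composite $\beta^{\diamond}$ is $(2pn+2n-4)$-connected, as required.

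The only genuinely deep step is the homology computation for $C(n)_{(p)}$, which rests on the Anick--Neisendorfer analysis of the fibre of the double suspension; once that is granted, everything reduces to the connectivity bookkeeping above. The one point needing a little care is the construction of $\beta$: because $M_{2pn-3}(\zp{})$ has a single cell above its bottom sphere, the sole extension obstruction is the order of the bottom homotopy class, which is exactly $p$, so the extension exists, and any two choices differ by a class that does not affect the homology isomorphism in the relevant range.
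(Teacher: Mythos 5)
Your argument is correct and follows essentially the same route as the paper: both rest on Neisendorfer's computation that $H_k(C(n)_{(p)})$ is $\zp{}$ in degree $2pn-3$ and vanishes otherwise for $k\leq 2pn+2n-5$, deduce the connectivity of $\beta$ by Whitehead, and then deloop via the $(4pn-5)$-connected evaluation $\Sigma\Omega\B C(n)_{(p)}\to\B C(n)_{(p)}$. The only difference is that you spell out the construction of the ``canonical'' map $\beta$ (extending a generator of $\pi_{2pn-3}\cong\zp{}$ over the top cell of the Moore space) and the inequality $4pn-5\geq 2pn+2n-4$, details the paper leaves implicit.
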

 
 \begin{proof}[Proof of Theorem \ref{mainthm:Cn}]
 The H-fibration sequence (\ref{fib:Cn}) induces an exact sequence
  \begin{multline*}
    [X,\Omega S^{2n-1}_{(p)}]\xra{(\Omega E^2_{(p)})_\sharp}[X,\Omega^3S^{2n+1}_{(p)}]\xra{}  [X,C(n)_{(p)}]\\
    \xra{(\Omega v_n)_\sharp}[X,S^{2n-1}_{(p)}]\xra{(E^2_{(p)})_\sharp}[X,\Omega^2S^{2n+1}_{(p)}]
     \xra{(v_n)_\sharp}[X,\B C(n)_{(p)}].
  \end{multline*}
  By Lemma \ref{lem:Cn}, \ref{lemJamesUP}, there are natural isomorphisms of abelian groups
  \begin{align*}
   &\pi^{2pn-3}(X;\zp{})\xra{\cong}[X,C(n)_{(p)}], \text{ if }\dim(X)\leq 2pn+2n-6; \\
    &\pi^{2pn-2}(X;\zp{})\xra{\cong}[X,\B C(n)_{(p)}],\text{ if }\dim(X)\leq 2pn+2n-5.
  \end{align*}
 Since $C(n)_{(p)}$ has H-space exponent $p$ \cite[cf.~][Corollary 1.5]{CMN79-2} or by Theorem \ref{thmPeterson} (\ref{lemsplit:zp}), $\pi^{2pn-3}(X;\zp{})$ is a vector space over $\zp{}$ and the proof is completed.
 \end{proof}

 \subsection{Anick's fibration}\label{sec:Anick}
 Let $p$ is an odd prime in this subsection.  When $p\geq 5$, $n,r\geq 1$, Anick \cite{AnickBook} constructed the following homotopy fibration (\ref{fib:Anick}), which was extended to $p=3$ by Gray and Theriault \cite{GT10}:
 \begin{lemma}[cf.~ \cite{GT10}]\label{lemTh}
 For integers $n,r\geq 1$, there exists a space $\T{2n-1}$ and an H-fibration
 sequence
 \begin{equation}\label{fib:Anick}
    \Omega^2S^{2n+1}_{(p)}\xra{\phi_r}S^{2n-1}_{(p)}\xra{\imath_r} \T{2n-1}\xra{\pi_\infty} \Omega S^{2n+1}_{(p)}.
 \end{equation}
 with the following two properties:
  \[\phi_r\circ E^2_{(p)}\simeq p^r\qquad  E^2_{(p)}\circ \phi_r\simeq \Omega^2 p^r,\] where  $E^2_{(p)}$
 is the double suspension localized at $p$.
 
 Moreover, there is a coalgebra isomorphism
 \[H_\ast(\T{2n-1};\zp{})\cong\zp{}[v_{2n}]\otimes \Lambda(u_{2n-1}),\]
 where subscripts indicate dimensions and $\beta^r(v_{2n})=u_{2n-1}$ under the $r$-{th} Bockstein.
 
 \end{lemma}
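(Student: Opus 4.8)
The plan is to realize $\T{2n-1}$ as the total space of a fibration over $\Omega S^{2n+1}_{(p)}$ with fibre $S^{2n-1}_{(p)}$, and to recover the four-term H-fibration sequence as the left Puppe extension $\Omega^2 S^{2n+1}_{(p)}\xra{\phi_r}S^{2n-1}_{(p)}\xra{\imath_r}\T{2n-1}\xra{\pi_\infty}\Omega S^{2n+1}_{(p)}$ of the basic fibration $S^{2n-1}_{(p)}\xra{\imath_r}\T{2n-1}\xra{\pi_\infty}\Omega S^{2n+1}_{(p)}$, whose connecting map is $\phi_r$. Everything therefore reduces to producing a single map $\phi_r\colon \Omega^2 S^{2n+1}_{(p)}\to S^{2n-1}_{(p)}$ that is a twofold desuspension of the degree-$p^r$ map, i.e.
\[\phi_r\circ E^2_{(p)}\simeq p^r,\qquad E^2_{(p)}\circ\phi_r\simeq \Omega^2(p^r),\]
and then endowing the resulting fibration with a compatible H-structure.

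First I would seek $\phi_r$ from the Gray fibration (\ref{fib:Cn}): there $E^2_{(p)}$ is the fibre inclusion of $v_n\colon \Omega^2 S^{2n+1}_{(p)}\to \B C(n)_{(p)}$, so the self-map $\Omega^2(p^r)$ factors as $E^2_{(p)}\circ\phi_r$ for some $\phi_r$ exactly when $v_n\circ\Omega^2(p^r)\simeq \ast$. Establishing this nullity is the heart of the matter; it is driven by the fact that $C(n)_{(p)}$ has H-space exponent $p$ \cite{CMN79-2}, which bounds the order of the relevant maps, but converting the exponent bound into the desired desuspension is precisely the deep content of Anick's theorem (for $p\geq 5$ one may instead invoke Anick's differential graded Lie algebra machinery, while the geometric route through (\ref{fib:Cn}) is what Gray--Theriault push down to $p=3$, where the argument is most delicate). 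Granting $\phi_r$ with $E^2_{(p)}\circ\phi_r\simeq\Omega^2(p^r)$, the companion relation follows by precomposing with $E^2_{(p)}$: naturality of the double suspension gives
\[E^2_{(p)}\circ(p^r)\simeq \Omega^2(p^r)\circ E^2_{(p)}=E^2_{(p)}\circ(\phi_r\circ E^2_{(p)}),\]
and since $(E^2_{(p)})_\sharp$ is injective on $[S^{2n-1},S^{2n-1}]_{(p)}$ in this range (the double suspension is a homotopy isomorphism on the bottom cell), we conclude $\phi_r\circ E^2_{(p)}\simeq p^r$.

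Next I would build the fibration from $\phi_r$. Using that $S^{2n-1}_{(p)}$ is a homotopy-associative, homotopy-commutative H-space for odd $p$ (at $p=3$ this is exactly where the refined structural input of Gray--Theriault enters), I promote $\phi_r$ to a holonomy action $\mu=m\circ(\phi_r\times\mathrm{id})\colon \Omega^2 S^{2n+1}_{(p)}\times S^{2n-1}_{(p)}\to S^{2n-1}_{(p)}$, where $m$ is the H-multiplication, and form the associated two-sided bar (Borel) construction to obtain the fibration $S^{2n-1}_{(p)}\xra{\imath_r}\T{2n-1}\xra{\pi_\infty}\Omega S^{2n+1}_{(p)}$ with connecting map $\mu|_{\Omega^2 S^{2n+1}_{(p)}}=\phi_r$. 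Compatibility of $\mu$ with the respective multiplications equips $\T{2n-1}$ with an H-structure for which the whole sequence is an H-fibration sequence, as required.

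Finally I would read off the homology from the mod $p$ Serre spectral sequence of $S^{2n-1}_{(p)}\to \T{2n-1}\to \Omega S^{2n+1}_{(p)}$, whose $E^2$-page is $\Lambda(u_{2n-1})\otimes\zp{}[v_{2n}]$ using $H_\ast(S^{2n-1};\zp{})=\Lambda(u_{2n-1})$ and $H_\ast(\Omega S^{2n+1};\zp{})=\zp{}[v_{2n}]$. The only possible differential is the transgression $d_{2n}(v_{2n}^{k+1}\otimes 1)$, and it is governed by $\phi_r$; but $(E^2_{(p)})_\ast\phi_{r\ast}=(\Omega^2 p^r)_\ast=0$ on mod $p$ homology with $(E^2_{(p)})_\ast$ injective on the bottom class, so $\phi_{r\ast}=0$ on reduced homology, the holonomy acts trivially, the spectral sequence collapses, and one gets the coalgebra isomorphism
\[H_\ast\big(\T{2n-1};\zp{}\big)\cong \zp{}[v_{2n}]\otimes\Lambda(u_{2n-1}).\]
The relation $\beta^r(v_{2n})=u_{2n-1}$ is then forced by the power-map relation $E^2_{(p)}\circ\phi_r\simeq\Omega^2(p^r)$, which identifies the connecting map as an exact $p^r$-th root of the double suspension and hence pins the order of the attaching between $v_{2n}$ and $u_{2n-1}$ to be $p^r$, detected precisely by the $r$-th Bockstein. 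The principal obstacle is the nullity $v_n\circ\Omega^2(p^r)\simeq\ast$ of the second paragraph, i.e. the double desuspension of the degree map; this is the genuinely hard geometric content, and it is where the $p=3$ case demands the extra care supplied by Gray--Theriault.
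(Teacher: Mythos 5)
The paper offers no proof of this statement: it is quoted from Anick \cite{AnickBook} and Gray--Theriault \cite{GT10} (note the ``cf.''), so there is nothing in-paper to compare your argument against. Judged on its own terms, your outline has the right overall shape --- connecting map with the two power-map relations, fibration over $\Omega S^{2n+1}_{(p)}$, Serre spectral sequence for the homology --- but it contains genuine gaps at exactly the load-bearing points. First, the existence of $\phi_r$ with $E^2_{(p)}\circ\phi_r\simeq \Omega^2 p^r$ is not something you can obtain by ``lifting $\Omega^2 p^r$ through the fibre inclusion of $v_n$'': even granting the nullity $v_n\circ\Omega^2 p^r\simeq\ast$ (which you correctly identify as the hard point but do not establish --- the H-space exponent of $C(n)_{(p)}$ does not convert into it in any formal way), such a lift is highly non-unique, and nothing in your argument selects one that also satisfies $\phi_r\circ E^2_{(p)}\simeq p^r$ after the construction, supports the multiplicative structure you need later, or yields a space with the stated homology. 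Historically the logic runs the other way: Anick and Gray--Theriault first build $\T{2n-1}$ and the fibration, and the identification of the connecting map's composites with the power maps (Gray's conjecture) is itself a separate, difficult theorem. Your paragraph deducing $\phi_r\circ E^2_{(p)}\simeq p^r$ from the other relation by injectivity of $(E^2_{(p)})_\sharp$ on $\pi_{2n-1}$ is fine, but it is conditional on input you have not produced.

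Second, the Borel/bar construction step requires an actual $A_\infty$-action of the loop space $\Omega^2 S^{2n+1}_{(p)}$ on $S^{2n-1}_{(p)}$, not merely the first-order map $m\circ(\phi_r\times\mathrm{id})$; supplying the higher coherences (and then showing the total space is an H-space for which the whole sequence is an H-fibration sequence) is a substantial portion of \cite{GT10} and of Theriault's work on the multiplicative structure of Anick's spaces, and you assert rather than construct it. By contrast, the final step is essentially sound: once the fibration and the relation $E^2_{(p)}\circ\phi_r\simeq\Omega^2 p^r$ are granted, the transgression argument showing the mod $p$ Serre spectral sequence collapses, giving $H_\ast(\T{2n-1};\zp{})\cong\zp{}[v_{2n}]\otimes\Lambda(u_{2n-1})$, and the identification of the $r$-th Bockstein on $v_{2n}$ are routine (the latter is cleanest via the $(4n-2)$-connected map $M_{2n-1}(\zp{r})\to\T{2n-1}$ rather than ``forced by the power-map relation''). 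In short: your proposal is an honest road map to the Gray--Theriault construction, but it defers the theorem's actual content to the very sources being cited, so it does not constitute a proof.
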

 The space $\T{2n-1}$ is called the \emph{Anick's space} and the fibration (\ref{fib:Anick}) is the called the \emph{Anick's fibration}.
 
 \begin{lemma}\label{lemAnick:conn}
 Let $p$ be an odd prime and let $n,r\geq 1$.
 \begin{enumerate}
    \item There exists a canonical map $\imath_M\colon M_{2n-1}(\zp{r})\to \T{2n-1}$ which is $(4n-2)$-connected.
    \item If $n\geq 2$, there is a canonical map $M_{2n-2}(\zp{r})\to \Omega\T{2n-1}$ which is $(4n-5)$-connected.
 \end{enumerate}
 
 \begin{proof}
 $(1)$ is a direct result of the coalgebra structure of $H_\ast(\T{2n-1};\zp{})$.
 
 $(2)$ Write $M=M_{2n-2}(\zp{r})$ for short. The adjoint map $\imath_M^\circ$ is the composition $(\Omega \imath_M)E_M$, where $E_M\colon M\to \Omega \Sigma M$ is the loop suspension.
 By $(1)$ the loop map $(\Omega \imath_M)$ is $(4n-3)$-connected.
 Since $E_M$ is $(4n-5)$-connected, $\imath_M^\circ$ is $(4n-5)$-connected and the proof completes.
 \end{proof}
 \end{lemma}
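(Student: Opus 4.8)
The plan is to construct both maps directly from the homology of the Anick space $\T{2n-1}$ recorded in Lemma \ref{lemTh} and then extract their connectivity from the homology Whitehead theorem, exactly in the spirit of the connectivity arguments in Lemma \ref{lemconn} and Proposition \ref{propconn:oc}.

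For part $(1)$, I would begin from the coalgebra isomorphism $H_\ast(\T{2n-1};\zp{})\cong\zp{}[v_{2n}]\otimes\Lambda(u_{2n-1})$ with $\beta^r(v_{2n})=u_{2n-1}$. In the range $\ast\leq 2n$ the only reduced classes are $u_{2n-1}$ and $v_{2n}$, and the Bockstein relation $\beta^r v_{2n}=u_{2n-1}$ forces the $2n$-skeleton of $\T{2n-1}$ to be the Moore space $M_{2n-1}(\zp{r})$ (the attaching map of the top cell having degree $p^r$); the inclusion of this skeleton is the canonical map $\imath_M$. To read off its connectivity I would pass to $p$-local integral homology: in the range $\ast\leq 4n-1$ the Bockstein spectral sequence pairs $v_{2n}$ with $u_{2n-1}$ and, since $p$ is odd, $v_{2n}^2$ with $v_{2n}u_{2n-1}$, so $H_\ast(\T{2n-1};\zz{p})$ is $\zp{r}$ in degrees $2n-1$ and $4n-1$ and vanishes in between, whereas $M_{2n-1}(\zp{r})$ carries $\zp{r}$ only in degree $2n-1$. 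Thus $(\imath_M)_\ast$ is an isomorphism for $\ast\leq 4n-2$ and first fails to be surjective in degree $4n-1$, so the homology Whitehead theorem (cf.~\citep[Theorem 6.4.15]{Arkowitzbook}) yields that $\imath_M$ is $(4n-2)$-connected.

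For part $(2)$ I would take the adjoint and factor it through the loop suspension. Writing $M=M_{2n-2}(\zp{r})$ and noting that $\Sigma M=M_{2n-1}(\zp{r})$, the adjoint is $\imath_M^\circ=(\Omega\imath_M)\circ E_M$ with $E_M\colon M\to\Omega\Sigma M$ the loop suspension. Since $M$ is $(2n-3)$-connected (and, as $n\geq 2$, simply connected), Freudenthal makes $E_M$ exactly $(4n-5)$-connected, while looping the $(4n-2)$-connected map $\imath_M$ gives that $\Omega\imath_M$ is $(4n-3)$-connected. Because the connectivity of a composite is at least the minimum of the two, $\imath_M^\circ$ is $(4n-5)$-connected.

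The step I expect to be the main obstacle is the very first one in part $(1)$: rigorously producing the canonical map $\imath_M$, that is, confirming that the single Bockstein relation $\beta^r v_{2n}=u_{2n-1}$ genuinely pins down the bottom two cells of $\T{2n-1}$ as a $\zp{r}$-Moore space, so that the skeletal inclusion is well defined up to homotopy and induces the expected map on homology. Once this identification is secure, everything else is bookkeeping: the homology computation is immediate from Lemma \ref{lemTh}, and the connectivity estimates in both parts use only the homology Whitehead theorem together with the standard behaviour of connectivity under looping, composition, and the Freudenthal suspension.
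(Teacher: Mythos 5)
Your proposal is correct and follows essentially the same route as the paper: part (1) is exactly the "direct result of the coalgebra structure of $H_\ast(\T{2n-1};\zp{})$" that the paper invokes (you simply spell out the skeletal identification and the homology Whitehead argument, using that the reduced $p$-local homology of $\T{2n-1}$ vanishes strictly between degrees $2n-1$ and $4n-1$), and part (2) is verbatim the paper's factorization of the adjoint as $(\Omega\imath_M)\circ E_M$ with the same connectivity bookkeeping.
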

 
 \begin{remark}\label{rmk:Anick}
 \begin{enumerate}
    \item Let $i_{(p)}\colon S^{2n-1}_{(p)}\xra{}M_{2n-1}(\zp{r})$ be the canonical inclusion map localized at $p$, then we have \[\imath_{r}=\imath_M\circ i_{(p)}.\]
    \item Let $S^{2n-1}\{p^r\}$ be the homotopy fibre of the $p^r$-th power map of $S^{2n-1}_{(p)}$. By \citep[page 472]{Neisendorferbook}, for each $n\geq 2$, the canonical map $M_{2n-2}(\zp{r})\to S^{2n-1}\{p^r\}$ is $(4n-5)$-connected. Together with  Proposition \ref{lemAnick:conn} $(2)$, we obtain isomorphisms for $\dim(X)\leq 4n-6$:
     \[\pi^{2n-2}(X;\zp{r})\xra{\cong}[X,S^{2n-1}\{p^r\}]\xra{\cong}[X,\Omega\T{2n-1}].\]
 
 \end{enumerate}
 \end{remark}
 
 Since $T^1_{\infty}(p^r)\simeq K_1(\zp{r})\times\Omega \s{3}_{(p)}$ \citep[Lemma 5.1]{Theriault2001-2}, the group $[X,\Omega\s{3}_{(p)}]$ has been discussed, we may assume that $n\geq 2$.
 
 \begin{proof}[Proof of Theorem \ref{mainthm:GUCT}]
 For any prime $p\geq 5$ and any complex $X$,  $[X,\T{2n-1}]$ is an abelian group since $\T{2n-1}$ is a homotopy commutative H-space \citep[Theorem 1.2]{Theriault2001}. The Anick's fibration (\ref{fib:Anick}) induces an exact sequence of abelian groups
 \begin{multline*}
    [X,\Omega^2S^{2n+1}_{(p)}]\xra{(\phi_r)_\sharp} [X,S^{2n-1}_{(p)}]\xra{(\imath_r)_\sharp}[X,\T{2n-1}]\xra{(\pi_\infty)_\sharp}[X,\Omega S^{2n+1}].
 \end{multline*}
 
 Consider the following bijection for complexes $X$ of dimension $\leq 4n-3$:
 \begin{equation}\label{bij:Anick}
    \pi^{2n-1}(X;\zp{r})\xra{}[X,\T{2n-1}].
 \end{equation}
 If $\dim(X)\leq 4n-4$, by Lemma \ref{lemJamesUP} this bijection is an isomorphism of abelian groups.
 If $\dim(X)=4n-3$, the bijection (\ref{bij:Anick}) provides $\pi^{2n-1}(X;\zp{r})$ an abelian group structure.
 
 Consider the commutative square of abelian groups:
 \[\begin{tikzcd}
    {[X,S^{2n-1}_{(p)}]}\ar[r," p^r\cdot"]\ar[d,"(E^2_{(p)})_\sharp"]&{[X,S^{2n-1}_{(p)}]}\ar[d,equal]\\
    {[X,\Omega^2S^{2n+1}_{(p)}]}\ar[r,"(\phi_r)_\sharp"]&\pi^{2n-1}(X;\zz{p})
 \end{tikzcd}
 \]
  Since $E^2_{{(p)}}$ is $(2pn-3)$-connected, $(E^2_{(p)})_\sharp$ is an isomorphism and hence
 \begin{align*}
    T_{\phi_r}(X)&\cong\coker\big(\pi^{2n-1}(X;\zz{p})\xra{p^r}\pi^{2n-1}(X;\zz{p})\big)\\
    &\cong \pi^{2n-1}(X;\zz{p})\otimes\zp{r}
 \end{align*}
 and the exact sequence in Theorem \ref{mainthm:GUCT} is proved.
 
 By \citep[Theorem 1.3]{Theriault2001}, $\T{2n-1}$ has H-space exponent $p^r$ for each $p\geq 5$. Hence $\pi^{2n-1}(X;\zp{r})\cong [X,\T{2n-1}]$ is annihilated by $p^r$; especially, $\pi^{2n-1}(X;\zp{})$ is a vector space over $\zp{}$, $T_{\phi_1}(X)$ is a direct summand.
 The proof of $T_{\phi_r}(X)$ is a direct summand for $r\geq 2$ is similar to that of \citep[Lemma 7.3]{Peterson56-2}; the details are given as follows.
 Consider the commutative diagram induced by coefficient homomorphism $\rho\colon\zp{r}\to \zp{}$:
 \[\begin{tikzcd}
   0\ar[r]& \pi^{2n-1}(X;\zz{p})\otimes\zp{r}\ar[r,"i_r"]\ar[d,"1\otimes\rho"]&\pi^{2n-1}(X;\zp{r})\ar[d,"\rho_\sharp"]\\
   0\ar[r]& \pi^{2n-1}(X;\zz{p})\otimes\zp{}\ar[r,"i_1"]&\pi^{2n-1}(X;\zp{})
 \end{tikzcd}\]
  Since $\pi^{2n-1}(X;\zz{p})$ is  finitely generated, we can write
  \[\pi^{2n-1}(X;\zz{p})=\Z\langle \alpha_1,\cdots,\alpha_k\rangle\oplus\zp{u_1}\langle\beta_1\rangle\oplus\cdots\zp{u_l}\langle \beta_l\rangle.\]
  Then $\{\bar{\alpha}_i,\bar{\beta}_j\}$ form a basis of $\pi^{2n-1}(X;\zz{p})\oplus\zp{r}$. Obviously $(1\otimes\rho)(\gamma_k)\neq 0$ for arbitrary $\gamma_k\in \{\bar{\alpha}_i,\bar{\beta}_j\}$, hence $i_1(1\otimes\rho)(\gamma_k)\neq 0$ for all $k$.
  Assume that $i_r\big(\pi^{2n-1}(X;\zz{p})\oplus\zp{r}\big)\subseteq \pi^{2n-1}(X;\zp{r})$ is not a direct summand, then there exists $\gamma_{k_0}$ satisfying $i_r(\gamma_{k_0})=p \delta$ for some $\delta\in\pi^{2n-1}(X;\zp{r})$. Hence \[p\rho_\sharp(\delta)=\rho_\sharp \circ i_r(\gamma_{k_0})=i_1(1\otimes\rho)(\gamma_{k_0})=0,\]
  which contradicts.
 
 If $\dim(X)\leq 4n-4$, $T_{\phi_r}(X)\cong \pi^{2n-1}(X)\otimes \zp{r}$ follows by Theorem \ref{thmPeterson} (\ref{thmUCT}).
 \end{proof}

\bibliography{Refs}
\bibliographystyle{ijmart}

\end{document}